\newtheorem{theorem}{\sc Theorem}[section]
\newtheorem{proposition}[theorem]{\sc Proposition}
\newtheorem{notations}[theorem]{\sc Notations}
\newtheorem{lemma}[theorem]{\sc Lemma}
\newtheorem{corollary}[theorem]{\sc Corollary}
\theoremstyle{definition}
\newtheorem{definition}[theorem]{\sc Definition}
\newtheorem{definitions}[theorem]{\sc Definitions}
\theoremstyle{remark}
\newtheorem{remark}[theorem]{\sc Remark}
\newtheorem{claim}[theorem]{}
\def\ot{\otimes}
\def\cot{\square}
\def\w{\wedge }
\newcommand{\diagMorphIdeal}{\xymatrix{I \ar[rr]^{f} \ar[dr]_{i_I^A}&&J \ar[dl]^{i_J^A} \\&A}}
\newcommand{\diagMultIdeal}{\xymatrix{ 0\ar[r]& IJ \ar[rr]^{i^A_{IJ}} &&  A \ar[rr]^{\pi^A_{I,J}} &&Q_{I,J}\ar[r]&0  \\ && I\ot J  \ar@{.>}[lu]^{\overline{m}_{I,J}}\ar[ur]_{m_{I,J}}}}
\newcommand{\diagComultWedge}{\xymatrix{ 0\ar[r]& C\w_E  D \ar[rr]^{i_{C\w_E  D}^E} &&  E\ar[dr]_{\Delta_{C,D}} \ar[rr]^{p^E_{C\w_E  D}} &&\frac{E}{C\w_E  D}\ar[r]\ar@{.>}[dl]^{\overline{\Delta}_{C,D}}&0  \\
 &&&& \frac{E}{C}\ot\frac{E}{D}}}
\begin{document}
\title{Associated Graded Algebras and Coalgebras}
\author{Alessandro Ardizzoni}
\address{University of Ferrara, Department of Mathematics, Via Machiavelli
35, Ferrara, I-44121, Italy}
\email{alessandro.ardizzoni@unife.it}
\urladdr{http://www.unife.it/utenti/alessandro.ardizzoni}
\author{Claudia Menini}
\address{University of Ferrara, Department of Mathematics, Via Machiavelli
35, Ferrara, I-44121, Italy}
\email{men@unife.it}
\urladdr{http://www.unife.it/utenti/claudia.menini}
\subjclass{Primary 18D10; Secondary 16W30}
\thanks{This paper was written while the authors were members of
G.N.S.A.G.A. with partial financial support from M.I.U.R. within the National Research Project PRIN 2007.}

\begin{abstract}
We investigate the notion of associated graded coalgebra (algebra) of a
bialgebra with respect to a subbialgebra (quotient bialgebra) and
characterize those which are bialgebras of type one in the framework of
abelian braided monoidal categories.
\end{abstract}

\keywords{Braided bialgebras, Monoidal categories, Cotensor Coalgebras}
\maketitle
\tableofcontents

\pagestyle{headings}

\section*{Introduction}

Let $H$ be a braided bialgebra in a cocomplete and complete abelian braided
monoidal category $\left( \mathcal{M},c\right) $ satisfying $AB5$. Assume
that the tensor product commutes with direct sums and is two-sided exact.
Let $M$ be in $_{H}^{H}\mathcal{M}_{H}^{H}$. Let $T=T_{H}(M)$ be the
relative tensor algebra and let $T^{c}=T_{H}^{c}(M)$ be the relative
cotensor coalgebra as introduced in \cite{AMS:Cotensor}. Then both $T$ and $%
T^{c}$ have a natural structure of graded braided bialgebra and the natural
algebra morphism from $T$ to $T^{c}$, which coincide with the canonical
injections on $H$ and $M$, is a graded bialgebra homomorphism. Thus its
image is a graded braided bialgebra which is denoted by $H[M]$ and called
\emph{the braided bialgebra of type one associated to }$H$\emph{\ and }$M$.
Ordinary bialgebras of type one were introduced by Nichols in \cite{Ni}.
They came out to play a relevant role in the theory of Hopf Algebras. In
particular, their "coinvariant" part, called Nichols algebra, has been
deeply investigated, see e.g. \cite{Ro}, \cite{AS} and the references
therein.

Let $B\hookrightarrow E$ be a monomorphism in $\mathcal{M}$ which is a
braided bialgebra homomorphism in $\mathcal{M}$. Under some technical
assumptions, we prove in Theorem \ref{teo: typeOne Alg} that the following
assertions are equivalent.

\begin{enumerate}
\item[$\left( 1\right) $] $gr_{B}E$ is the braided bialgebra of type one
associated to $B$ and $\frac{B\wedge _{E}B}{B}$.

\item[$\left( 2\right) $] $gr_{B}E$ is strongly $%
\mathbb{N}
$-graded as an algebra (in the sense of Definition \ref{def: strongly grAlg}%
).

\item[$\left( 3\right) $] $\oplus _{n\in \mathbb{N}}B^{\wedge _{E}n+1}$ is
strongly $%
\mathbb{N}
$-graded as an algebra.

\item[$\left( 4\right) $] $B^{\wedge _{E}n+1}=\left( B\wedge _{E}B\right)
^{\cdot _{E}n}$ for every $n\geq 2.$
\end{enumerate}

Here $gr_{B}E$ denotes the associated graded coalgebra $\oplus _{n\in
\mathbb{N}}\frac{B^{\wedge _{E}n+1}}{B^{\wedge _{E}n}}$. As an application,
in Corollary \ref{coro: pre-Lift}, we consider the case of a subbialgebra $H$
of a bialgebra $E$ over a field $K$.

Similar results are obtained in the case $gr_{I}E:=\oplus _{n\in \mathbb{N}}%
\frac{I^{n}}{I^{n+1}}$ where $I$ is the kernel of an epimorphism $\pi
:E\rightarrow B$ which is a braided bialgebra homomorphism in $\mathcal{M}$.

The paper is organized as follows. In Section \ref{sec: Preliminaries and
Notations} we recall some definitions and introduce basic notations needed
in the paper. In Section \ref{sec: ass Grad Coalg}, we prove that the
associated graded coalgebra $gr_{C}E:=\oplus _{n\in \mathbb{N}}\frac{%
C^{\wedge _{E}n+1}}{C^{\wedge _{E}n}}$ for a given subcoalgebra $C$ of a
coalgebra $E$ in $\mathcal{M}$, is a strongly $%
\mathbb{N}
$-graded coalgebra (see Theorem \ref{teo: gr dual}) and hence it can be
characterized as in Theorem \ref{teo: graded dual}. Dual results are
obtained in Section \ref{sec: ass Grad alg} for the associated graded
algebra $gr_{I}A:=\oplus _{n\in \mathbb{N}}\frac{I^{n}}{I^{n+1}}$, where $I$
is an ideal of an algebra $A$ in $\mathcal{M}$. In Section \ref{sec: ass
Grad Coalg Bialg}, we firstly show that $gr_{B}E$ is a graded braided
bialgebra in $\left( \mathcal{M},c\right) $ (see Theorem \ref{teo: gr_BE
Bialg}) and then prove Theorem \ref{teo: typeOne Alg} which is the main
result of the paper. Section \ref{sec: ass Grad alg Bialg} deals with the
dual results. For the reader's sake, some technicalities are collected in
Appendix \ref{sec: techn}.

Finally we would like to outline that many results are firstly stated and
proved in the coalgebra case where we found the proofs less straightforward.
Also proofs in the algebra case are not given whenever they would have been
an easy adaptation of the coalgebra ones.

In \cite{Connected}, results of the present paper are applied to study
strictly graded bialgebras.

\section{Preliminaries and Notations\label{sec: Preliminaries and Notations}}

\textbf{Notations.} \ Let $[(X,i_{X})]$ be a subobject of an object $E$ in
an abelian category $\mathcal{M},$ where $i_{X}=i_{X}^{E}:X\hookrightarrow E$
is a monomorphism and $[(X,i_{X})]$ is the associated equivalence class. By
abuse of language, we will say that $(X,i_{X})$ is a subobject of $E$ and we
will write $(X,i_{X})=(Y,i_{Y})$ to mean that $(Y,i_{Y})\in \lbrack
(X,i_{X})]$. The same convention applies to cokernels. If $(X,i_{X})$ is a
subobject of $E$ then we will write $(E/X,p_{X})=\text{Coker}(i_{X})$, where
$p_{X}=p_{X}^{E}:E\rightarrow E/X$.\newline
Let $(X_{1},i_{X_{1}}^{Y_{1}})$ be a subobject of $Y_{1}$ and let $%
(X_{2},i_{X_{2}}^{Y_{2}})$ be a subobject of $Y_{2}$. Let $%
x:X_{1}\rightarrow X_{2}$ and $y:Y_{1}\rightarrow Y_{2}$ be morphisms such
that $y\circ i_{X_{1}}^{Y_{1}}=i_{X_{2}}^{Y_{2}}\circ x$. Then there exists
a unique morphism, which we denote by $y/x=\frac{y}{x}:Y_{1}/{X_{1}}%
\rightarrow Y_{2}/{X_{2},}$ such that $\frac{y}{x}\circ
p_{X_{1}}^{Y_{1}}=p_{X_{2}}^{Y_{2}}\circ y$:
\begin{equation*}
\xymatrix{ X_1 \ar[d]_{x} \ar@{^{(}->}[r]^{i_{X_1}^{Y_1}} & Y_1 \ar[d]_{y}
\ar[r]^{p_{X_1}^{Y_1}} & \frac{Y_1}{X_1} \ar[d]^{\frac{y}{x}} \\ X_2
\ar@{^{(}->}[r]^{i_{X_2}^{Y_2}} & Y_2 \ar[r]^{p_{X_2}^{Y_2}} &
\frac{Y_2}{X_2} }
\end{equation*}%
$\delta _{u,v}$ will denote the Kronecker symbol for every $u,v\in
\mathbb{N}
$.

\begin{claim}
\textbf{Monoidal Categories.} Recall that (see \cite[Chap. XI]{Kassel}) a
\emph{monoidal category}\textbf{\ }is a category $\mathcal{M}$ endowed with
an object $\mathbf{1}\in \mathcal{M}$\textbf{\ } (called \emph{unit}), a
functor $\otimes :\mathcal{M}\times \mathcal{M}\rightarrow \mathcal{M}$
(called \emph{tensor product}), and functorial isomorphisms $%
a_{X,Y,Z}:(X\otimes Y)\otimes Z\rightarrow $ $X\otimes (Y\otimes Z)$, $l_{X}:%
\mathbf{1}\otimes X\rightarrow X,$ $r_{X}:X\otimes \mathbf{1}\rightarrow X,$
for every $X,Y,Z$ in $\mathcal{M}$. The functorial morphism $a$ is called
the \emph{associativity constraint }and\emph{\ }satisfies the \emph{Pentagon
Axiom, }that is the following relation
\begin{equation*}
(U\otimes a_{V,W,X})\circ a_{U,V\otimes W,X}\circ (a_{U,V,W}\otimes
X)=a_{U,V,W\otimes X}\circ a_{U\otimes V,W,X}
\end{equation*}%
holds true, for every $U,V,W,X$ in $\mathcal{M}.$ The morphisms $l$ and $r$
are called the \emph{unit constraints} and they obey the \emph{Triangle
Axiom, }that is $(V\otimes l_{W})\circ a_{V,\mathbf{1},W}=r_{V}\otimes W$,
for every $V,W$ in $\mathcal{M}$.

A \emph{braided monoidal category} $(\mathcal{M},c)$ is a monoidal category $%
(\mathcal{M},\otimes ,\mathbf{1})$ equipped with a \emph{braiding} $c$, that
is a natural isomorphism $c_{X,Y}:X\otimes Y\longrightarrow Y\otimes X$ for
every $X,Y,Z$ in $\mathcal{M}$ satisfying
\begin{equation*}
c_{X\otimes Y,Z}=(c_{X,Z}\otimes Y)(X\otimes c_{Y,Z})\qquad \text{and}\qquad
c_{X,Y\otimes Z}=(Y\otimes c_{X,Z})(c_{X,Y}\otimes Z).
\end{equation*}%
For further details on these topics, we refer to \cite[Chapter XIII]{Kassel}.
\end{claim}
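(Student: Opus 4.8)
The statement is not a proposition with nontrivial content but a recollection, following \cite[Chap.~XI]{Kassel}, of the definitions of a monoidal category and of a braided monoidal category. The Pentagon Axiom, the Triangle Axiom, and the two compatibility identities for the braiding $c$ are \emph{imposed} as part of the respective structures, so in the strict sense there is nothing to prove: the natural ``proof'' consists only in checking that the axioms as transcribed here agree verbatim with those of the cited source, so that the constructions carried out later in the paper (the relative tensor and cotensor (co)algebras, the bialgebra of type one $H[M]$, and the graded objects $gr_{B}E$ and $gr_{I}A$) rest on an unambiguous foundation. Accordingly, my plan would simply be to fix this notation and to record the reference, exactly as the authors do.

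If one insists on extracting genuine mathematical content from these definitions, the relevant companion result is the coherence theorem (Mac Lane, with Kelly's reduction of the axioms), which asserts that every diagram whose edges are obtained by tensoring and composing instances of $a$, $l$, $r$ and their inverses commutes, and likewise in the braided case for diagrams also involving $c$. To prove it I would proceed in two stages. First I would derive, purely from the Pentagon and Triangle Axioms, the elementary consequences relating the unit constraints \lin in particular $l_{\mathbf{1}}=r_{\mathbf{1}}$ and the compatibility of $a$ with $l$ and $r$ when the unit object is inserted in the various positions. Then I would fix a canonical ``fully bracketed'' normal form for each iterated tensor product and show, by induction on word length, that every isomorphism built from the constraints equals the unique canonical comparison morphism to that normal form.

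The main obstacle in such an argument is purely combinatorial: one must check that any two rewriting sequences leading from a given bracketing to the normal form produce the same composite isomorphism, and this confluence is exactly what the Pentagon Axiom guarantees (it plays the role of associativity for the rewriting). In the braided setting one additionally needs the two hexagon-type identities recalled above, together with the naturality of $c$, to move braidings past the constraints. Since all of this is classical and fully treated in \cite{Kassel}, it is legitimate in the present paper to cite it rather than reprove it; hence no independent argument is required here, and the subsequent sections may proceed directly on the basis of these definitions.
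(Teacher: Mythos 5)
You correctly recognize that this ``claim'' is a definitional recollection with no proof obligation, which matches the paper exactly: the authors state the axioms and cite Kassel without argument. Your optional sketch of the coherence theorem is accurate but, as you note, properly belongs to the cited source rather than to this paper.
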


It is well known that the Pentagon Axiom completely solves the consistency
problem arising out of the possibility of going from $((U\otimes V)\otimes
W)\otimes X$ to $U\otimes (V\otimes (W\otimes X))$ in two different ways
(see \cite[page 420]{Majid}). This allows the notation $X_{1}\otimes \cdots
\otimes X_{n}$ forgetting the brackets for any object obtained from $%
X_{1},\cdots X_{n}$ using $\otimes $. Also, as a consequence of the
coherence theorem, the constraints take care of themselves and can then be
omitted in any computation involving morphisms in $\mathcal{M}$.\newline
Thus, for sake of simplicity, from now on, we will omit the associativity
constraints.\medskip \newline
The notions of algebra, module over an algebra, coalgebra and comodule over
a coalgebra can be introduced in the general setting of monoidal categories.
Given an algebra $A$ in $\mathcal{M}$ on can define the categories $_{A}%
\mathcal{M}$, $\mathcal{M}_{A}$ and $_{A}\mathcal{M}_{A}$ of left, right and
two-sided modules over $A$ respectively. Similarly, given a coalgebra $C$ in
$\mathcal{M}$, one can define the categories of $C$-comodules $^{C}\mathcal{M%
},\mathcal{M}^{C},{^{C}\mathcal{M}^{C}}$. For more details, the reader is
referred to \cite{AMS}.

\begin{definitions}
\label{abelian assumptions}Let $\mathcal{M}$ be a monoidal category.\newline
We say that $\mathcal{M}$ is an \textbf{abelian monoidal category }if $%
\mathcal{M}$ is abelian and both the functors $X\otimes (-):\mathcal{M}%
\rightarrow \mathcal{M}$ and $(-)\otimes X:\mathcal{M}\rightarrow \mathcal{M}
$ are additive and right exact, for any $X\in \mathcal{M}.$\newline
We say that $\mathcal{M}$ is an \textbf{coabelian monoidal category }if $%
\mathcal{M}^o$ is an abelian monoidal category, where $\mathcal{M}^{o}$
denotes the dual monoidal category of $\mathcal{M}$. Recall that $\mathcal{M}%
^{o}$ and $\mathcal{M}$ have the same objects but $\mathcal{M}^{o}(X,Y)=%
\mathcal{M}(Y,X) $ for any $X,Y$ in $\mathcal{M}$.
\end{definitions}

Given an algebra $A$ in an abelian monoidal category $\mathcal{M}$, there
exist a suitable functor $\otimes _{A}:{_{A}\mathcal{M}_{A}}\times {_{A}%
\mathcal{M}_{A}}\rightarrow {_{A}\mathcal{M}}_{A}$ and constraints that make
the category $({_{A}\mathcal{M}}_{A},\otimes _{A},A)$ abelian monoidal, see
\cite[1.11]{AMS}. The tensor product over $A$ in $\mathcal{M}$ of a right $A$%
-module $V$ and a left $A$-module $W$ is defined to be the coequalizer:
\begin{equation*}
\xymatrix{ (V\otimes A)\otimes W \ar@<.5ex>[rr] \ar@<-.5ex>[rr]&& V\otimes W
\ar[rr]^{_{A}\chi _{V,W}} && V\otimes _{A}W \ar[r] & 0 }
\end{equation*}%
Note that, since $\otimes $ preserves coequalizers, then $V\otimes _{A}W$ is
also an $A$-bimodule, whenever $V$ and $W$ are $A$-bimodules.\medskip
\newline
Dually, let $\mathcal{M}$ be a coabelian monoidal category.\newline
Given a coalgebra $(C,\Delta ,\varepsilon )$ in $\mathcal{M}$, there exist
of a suitable functor $\square _{C}:{^{C}\mathcal{M}^{C}}\times {^{C}%
\mathcal{M}^{C}}\rightarrow {^{C}\mathcal{M}^{C}}$ and constraints that make
the category $({^{C}\mathcal{M}^{C}},\square _{C},C)$ coabelian monoidal.
The cotensor product over $C$ in $\mathcal{M}$ of a right $C$-bicomodule $V$
and a left $C$-comodule $W$ is defined to be the equalizer:
\begin{equation*}
\xymatrix{ 0 \ar[r] & V\cot_{C}W \ar[rr]^{_C\varsigma_{V,W}} && V\otimes W
\ar@<.5ex>[rr] \ar@<-.5ex>[rr]&&V\ot(C\ot W) }
\end{equation*}%
Note that, since $\otimes $ preserves equalizers, then $V\square _{C}W$ is
also a $C$-bicomodule, whenever $V$ and $W$ are $C$-bicomodules.

\begin{claim}
\textbf{Graded Objects.} \label{claim 4.2}Let $\left( X_{n}\right) _{n\in
\mathbb{N}
}$ be a sequence of objects in a monoidal category $\mathcal{M}$ which is
cocomplete abelian and let
\begin{equation*}
X=\bigoplus_{n\in \mathbb{N}}X_{n}
\end{equation*}%
be their coproduct in $\mathcal{M}$. In this case we also say that $X$ is a
\emph{graded object of }$\mathcal{M}$ and that the sequence $\left(
X_{n}\right) _{n\in
\mathbb{N}
}$ defines a graduation on $X.$ A morphism
\begin{equation*}
f:X=\bigoplus_{n\in \mathbb{N}}X_{n}\rightarrow Y=\bigoplus_{n\in \mathbb{N}%
}Y_{n}
\end{equation*}%
is called a \emph{graded homomorphism} whenever there exists a family of
morphisms $\left( f_{n}:X_{n}\rightarrow Y_{n}\right) _{n\in
\mathbb{N}
}$ such that $f=\oplus _{n\in \mathbb{N}}f_{n}$ i.e. such that%
\begin{equation*}
f\circ i_{X_{n}}^{X}=i_{Y_{n}}^{Y}\circ f_{n},\text{ for every }n\in \mathbb{%
N}\text{.}
\end{equation*}%
We fix the following notations:
\begin{equation*}
p_{n}:X\rightarrow X_{n},
\end{equation*}%
be the canonical projection and let
\begin{equation*}
i_{n}:X_{n}\rightarrow X,
\end{equation*}%
be the canonical injection for any $n\in \mathbb{N}$.

Given graded objects $X,Y$ in $\mathcal{M}$ we set%
\begin{equation*}
\left( X\otimes Y\right) _{n}=\oplus _{a+b=n}\left( X_{a}\otimes
Y_{b}\right) .
\end{equation*}%
Then this defines a graduation on $X\otimes Y$ whenever{\Huge \ }the tensor
product commutes with direct sums. We denote by
\begin{equation*}
X_{a}\otimes Y_{b}\overset{\gamma _{a,b}^{X,Y}}{\rightarrow }\left( X\otimes
Y\right) _{a+b}\text{\qquad and\qquad }\left( X\otimes Y\right) _{a+b}%
\overset{\omega _{a,b}^{X,Y}}{\rightarrow }X_{a}\otimes Y_{b}
\end{equation*}%
the canonical injection and projection respectively. We have%
\begin{eqnarray}
\sum\limits_{a+b=n}\left( i_{a}^{X}\otimes i_{b}^{Y}\right) \omega
_{a,b}^{X,Y} &=&\nabla \left[ \left( i_{a}^{X}\otimes i_{b}^{Y}\right)
_{a+b=n}\right]  \label{form nablasum1} \\
\sum\limits_{a+b=n}\gamma _{a,b}^{X,Y}\left( p_{a}^{X}\otimes
p_{b}^{Y}\right) &=&\Delta \left[ \left( p_{a}^{X}\otimes p_{b}^{Y}\right)
_{a+b=n}\right]  \label{form deltasum1}
\end{eqnarray}%
where $\nabla \left[ \left( i_{a}^{X}\otimes i_{b}^{Y}\right) _{a+b=n}\right]
$ denotes the codiagonal morphism associated to the family $\left(
i_{a}^{X}\otimes i_{b}^{Y}\right) _{a+b=n}$ and $\Delta \left[ \left(
p_{a}^{X}\otimes p_{b}^{Y}\right) _{a+b=n}\right] $ denotes the diagonal
morphism associated to the family $\left( p_{a}^{X}\otimes p_{b}^{Y}\right)
_{a+b=n}.$
\end{claim}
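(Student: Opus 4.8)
The plan is first to record the biorthogonality relations among the structural morphisms of the biproduct $(X\ot Y)_n=\op_{a+b=n}(X_a\ot Y_b)$, and then to read off each of the two displayed identities directly from the universal property of this biproduct. Before doing so, I would observe that the standing hypothesis that $\ot$ commutes with direct sums is precisely what is needed to make sense of the graduation: applying the commutation to $X=\op_{a}X_a$ and $Y=\op_{b}Y_b$ yields a natural isomorphism $X\ot Y\cong\op_{(a,b)}(X_a\ot Y_b)$, and regrouping the summands by total degree $n=a+b$ identifies $X\ot Y$ with $\op_{n\in\N}(X\ot Y)_n$. Since each $(X\ot Y)_n$ is a finite coproduct in an abelian category, it is simultaneously a product, with injections $\gamma_{a,b}^{X,Y}$ and projections $\omega_{a,b}^{X,Y}$ obeying the standard relations $\omega_{a,b}^{X,Y}\gamma_{c,d}^{X,Y}=\delta_{a,c}\delta_{b,d}\,\Id$ and $\sum_{a+b=n}\gamma_{a,b}^{X,Y}\omega_{a,b}^{X,Y}=\Id$. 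I would isolate these as the only facts used below.

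To establish the first identity, I note that both sides are morphisms $(X\ot Y)_n\to X\ot Y$, so by the universal property of the coproduct it suffices to check equality after precomposition with each injection $\gamma_{c,d}^{X,Y}$ for $c+d=n$. The right-hand side produces $i_c^X\ot i_d^Y$ by the defining property of the codiagonal $\nabla$. For the left-hand side I would distribute the composition over the finite sum and invoke biorthogonality: only the term $(a,b)=(c,d)$ survives, leaving $i_c^X\ot i_d^Y$ again. Hence the two morphisms agree on every injection and therefore coincide.

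The second identity is the exact formal dual. Here both sides are morphisms $X\ot Y\to(X\ot Y)_n$, and it suffices to check equality after postcomposition with each projection $\omega_{c,d}^{X,Y}$, now using the universal property of the product. The defining property of the diagonal $\Delta$ handles the right-hand side, while on the left the same biorthogonality relation collapses the sum to the single term $(a,b)=(c,d)$, yielding $p_c^X\ot p_d^Y$ in both cases.

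I do not expect any genuine obstacle here: the whole content is the universal property of finite biproducts together with the biorthogonality of $\gamma$ and $\omega$. The only points that require a little care are bookkeeping ones, namely that within a fixed degree $n$ the equality $a=c$ already forces $b=n-a=n-c=d$, so the Kronecker factor is effectively single-indexed, and that one must first secure the graduation on $X\ot Y$ (via the commutation of $\ot$ with direct sums) before the morphisms $\gamma_{a,b}^{X,Y}$ and $\omega_{a,b}^{X,Y}$ are even available.
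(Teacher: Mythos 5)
Your verification is correct: the paper states the two identities \eqref{form nablasum1} and \eqref{form deltasum1} without proof, treating them as immediate consequences of the universal property of the finite biproduct $(X\otimes Y)_{n}=\oplus _{a+b=n}(X_{a}\otimes Y_{b})$, and your argument via the biorthogonality relations $\omega _{a,b}^{X,Y}\gamma _{c,d}^{X,Y}=\delta _{a,c}\delta _{b,d}\,\mathrm{Id}$ and $\sum_{a+b=n}\gamma _{a,b}^{X,Y}\omega _{a,b}^{X,Y}=\mathrm{Id}$ is exactly the intended (and standard) justification. Your preliminary remark that the hypothesis that $\otimes$ commutes with direct sums is what makes the graduation on $X\otimes Y$ well defined also matches the paper's own phrasing.
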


\section{The Associated Graded Coalgebra \label{sec: ass Grad Coalg}}

\begin{claim}
\label{def: grCoalg} Let $\mathcal{M}$ be a coabelian monoidal category such
that the tensor product commutes with direct sums.

Recall that a \emph{graded coalgebra} in $\mathcal{M}$ is a coalgebra $%
\left( C,\Delta ,\varepsilon \right) $ where
\begin{equation*}
C=\oplus _{n\in \mathbb{N}}C_{n}
\end{equation*}%
is a graded object of $\mathcal{M}$ such that $\Delta :C\rightarrow C\otimes
C$ is a graded homomorphism i.e. there exists a family $\left( \Delta
_{n}\right) _{_{n\in \mathbb{N}}}$ of morphisms
\begin{equation*}
\Delta _{n}^{C}=\Delta _{n}:C_{n}\rightarrow \left( C\otimes C\right)
_{n}=\oplus _{a+b=n}\left( C_{a}\otimes C_{b}\right) \text{ such that }%
\Delta =\oplus _{n\in \mathbb{N}}\Delta _{n}.
\end{equation*}%
We set
\begin{equation*}
\Delta _{a,b}^{C}=\Delta _{a,b}:=\left( C_{a+b}\overset{\Delta _{a+b}}{%
\rightarrow }\left( C\otimes C\right) _{a+b}\overset{\omega _{a,b}^{C,C}}{%
\rightarrow }C_{a}\otimes C_{b}\right) .
\end{equation*}%
A homomorphism $f:\left( C,\Delta _{C},\varepsilon _{C}\right) \rightarrow
\left( D,\Delta _{D},\varepsilon _{D}\right) $ of coalgebras is a graded
coalgebra homomorphism if it is a graded homomorphism too.
\end{claim}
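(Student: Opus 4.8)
The statement just recalled is a \emph{definition} imported from the graded setting rather than an assertion, so strictly there is nothing to derive; what a careful reader verifies is only that the data introduced are well posed and mutually determined, and that is what I would lay out. First I would check that the components $\Delta_{a,b}$ make sense: each arises by postcomposing the graded piece $\Delta_{a+b}\colon C_{a+b}\to (C\otimes C)_{a+b}$ with the canonical projection $\omega_{a,b}^{C,C}\colon (C\otimes C)_{a+b}\to C_a\otimes C_b$, and both maps are available precisely because $\mathcal{M}$ is cocomplete abelian and the tensor product commutes with direct sums, so that $(C\otimes C)_{n}=\oplus_{a+b=n}(C_a\otimes C_b)$ is a genuine graduation as recorded in \ref{claim 4.2}.

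Next I would record that the three descriptions $\Delta$, the family $(\Delta_n)_{n\in\mathbb{N}}$, and the bicomponents $(\Delta_{a,b})_{a+b=n}$ determine one another, so that the notation is unambiguous. The passage from $\Delta$ to the $\Delta_n$ is the defining hypothesis $\Delta=\oplus_n\Delta_n$; the reconstruction of $\Delta_n$ from its bicomponents follows from the biproduct identity $\sum_{a+b=n}\gamma_{a,b}^{C,C}\,\omega_{a,b}^{C,C}=\Id_{(C\otimes C)_n}$ relating the canonical injections and projections of \ref{claim 4.2}, whence $\Delta_n=\sum_{a+b=n}\gamma_{a,b}^{C,C}\,\Delta_{a,b}$. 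I would also note that coassociativity and counitality are properties of the single morphism $\Delta$, hence are inherited automatically once $\Delta=\oplus_n\Delta_n$ is assumed and impose no separate condition on the individual $\Delta_n$; the analogous remark applies to the notion of graded coalgebra homomorphism, which is simply a coalgebra map that happens to be graded in the sense of \ref{claim 4.2}.

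The only point that requires a moment's care---and the nearest thing to an obstacle---is the very existence of the graduation on $C\otimes C$: one must invoke the hypothesis that $\otimes$ commutes with direct sums to identify $(C\otimes C)_{n}$ with $\oplus_{a+b=n}(C_a\otimes C_b)$, so that the projections $\omega_{a,b}^{C,C}$ defining $\Delta_{a,b}$ are meaningful in the first place; the coabelian hypothesis on $\mathcal{M}$ then guarantees the dual bookkeeping needed for the cotensor constructions later. Everything beyond this identification is formal, and the statement is complete as soon as these coherence checks are made.
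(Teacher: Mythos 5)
This is a definition (the paper gives no proof for it), and you correctly identify it as such; your well-posedness checks — that $(C\otimes C)_n=\oplus_{a+b=n}(C_a\otimes C_b)$ requires the tensor product to commute with direct sums, and that $\Delta_n$ is recovered from its bicomponents via the finite biproduct identity $\sum_{a+b=n}\gamma_{a,b}^{C,C}\,\omega_{a,b}^{C,C}=\mathrm{Id}$ — are exactly the implicit content of \ref{claim 4.2}. Nothing further is required.
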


\begin{definition}
\label{def: strongly grCoalg}Let $(C=\oplus _{n\in
\mathbb{N}
}C_{n},\Delta ,\varepsilon )$ be a graded coalgebra in $\mathcal{M}$. In
analogy with the group graded case (see \cite{NT}), we say that $C$ is a
\emph{strongly }$%
\mathbb{N}
$\emph{-graded coalgebra} whenever

$\Delta _{i,j}^{C}:C_{i+j}\rightarrow C_{i}\otimes C_{j}$ is a monomorphism
for every $i,j\in \mathbb{N},$

where $\Delta _{i,j}^{C}$ is the morphism defined in Definition \ref{def:
grCoalg}.
\end{definition}

\begin{proposition}
\cite[Propositions 2.5 and 2.3]{AM} \label{lem: graded Deltaij}Let $\mathcal{%
M}$ be a coabelian monoidal category such that the tensor product commutes
with direct sums.

1) Let $C=\oplus _{n\in \mathbb{N}}C_{n}$ be a graded object of $\mathcal{M}$
such that there exists a family $\left( \Delta _{a,b}^{C}\right) _{_{a,b\in
\mathbb{N}
}}$
\begin{equation*}
\Delta _{a,b}^{C}:C_{a+b}\rightarrow C_{a}\otimes C_{b},
\end{equation*}%
of morphisms and a morphism $\varepsilon _{0}^{C}:C_{0}\rightarrow \mathbf{1}
$ which satisfy
\begin{gather}
\left( \Delta _{a,b}^{C}\otimes C_{c}\right) \circ \Delta
_{a+b,c}^{C}=\left( C_{a}\otimes \Delta _{b,c}^{C}\right) \circ \Delta
_{a,b+c}^{C}\text{,}  \label{form: locDelta} \\
\left( C_{d}\otimes \varepsilon _{0}^{C}\right) \circ \Delta
_{d,0}^{C}=r_{C_{d}}^{-1},\qquad \left( \varepsilon _{0}^{C}\otimes
C_{d}\right) \circ \Delta _{0,d}^{C}=l_{C_{d}}^{-1}\text{,}
\label{form: locEps}
\end{gather}%
for every $a,b,c\in
\mathbb{N}
$. Then there exists a unique morphism $\Delta _{C}:C\rightarrow C\otimes C$
such that
\begin{equation}
(p_{a}^{C}\otimes p_{b}^{C})\circ \Delta _{C}=\Delta _{a,b}^{C}\circ
p_{a+b}^{C},\text{ for every }a,b\in
\mathbb{N}
\label{form: coro grCoalg1}
\end{equation}%
holds. Moreover $\left( C=\oplus _{n\in \mathbb{N}}C_{n},\Delta
_{C},\varepsilon _{C}=\varepsilon _{0}^{C}p_{0}^{C}\right) $ is a graded
coalgebra.

2) If $C$ is a graded coalgebra, then
\begin{equation}
\Delta _{C}\circ i_{n}^{C}=\sum\limits_{a+b=n}\left( i_{a}^{C}\otimes
i_{b}^{C}\right) \circ \Delta _{a,b}^{C}  \label{form: grCoalg}
\end{equation}%
holds, $\varepsilon _{C}=\varepsilon _{C}i_{0}^{C}p_{0}^{C}$ so that $%
\varepsilon _{C}$ is a graded homomorphism, and we have that (\ref{form:
locDelta}) and (\ref{form: locEps}) hold for every $a,b,c\in
\mathbb{N}
$, where $\varepsilon _{0}^{C}=\varepsilon _{C}i_{0}$.\newline
Moreover $\left( C_{0},\Delta _{0}=\Delta _{0,0}^{C},\varepsilon
_{0}^{C}=\varepsilon _{C}i_{0}^{C}\right) $ is a coalgebra in $\mathcal{M}$,
$i_{0}^{C}$ is a coalgebra homomorphism and, for every $n\in
\mathbb{N}
$, $\left( C_{n},\Delta _{0,n}^{C},\Delta _{n,0}^{C}\right) $ is a $C_{0}$%
-bicomodule such that $p_{n}^{C}:C\rightarrow C_{n}$ is a morphism of $C_{0}$%
-bicomodules ($C$ is a $C_{0}$-bicomodule through $p_{0}^{C}$).
\end{proposition}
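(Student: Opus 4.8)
The plan is to build the global comultiplication from its prescribed graded components and then translate each coalgebra axiom into its degree-by-degree avatar, which turns out to be exactly one of the local relations (\ref{form: locDelta})--(\ref{form: locEps}); part 2) runs the same dictionary in reverse.

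For part 1), since $\otimes$ commutes with direct sums we have $C\otimes C=\oplus_{n}(C\otimes C)_{n}$ with $(C\otimes C)_{n}=\oplus_{a+b=n}(C_{a}\otimes C_{b})$. I would define $\Delta_{C}$ through its graded components: for each $n$ let $\Delta_{n}\colon C_{n}\to(C\otimes C)_{n}$ be the unique morphism with $\omega_{a,b}^{C,C}\circ\Delta_{n}=\Delta_{a,b}^{C}$ for all $a+b=n$ (it exists because $(C\otimes C)_{n}$ is a \emph{finite} biproduct), and set $\Delta_{C}=\oplus_{n}\Delta_{n}$. By construction $\Delta_{C}$ is a graded homomorphism, and a direct check using $p_{a}^{C}\otimes p_{b}^{C}=\omega_{a,b}^{C,C}\circ p_{a+b}^{C\otimes C}$ gives (\ref{form: coro grCoalg1}). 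Uniqueness is then clear: a morphism out of the coproduct $C=\oplus_{n}C_{n}$ is determined by its composites with the injections $i_{n}^{C}$, and composing (\ref{form: coro grCoalg1}) with $i_{n}^{C}$ fixes every component $\omega_{a,b}^{C,C}\Delta_{C}i_{n}^{C}$, hence fixes $\Delta_{C}i_{n}^{C}$ because $(C\otimes C)_{n}$ is a finite biproduct.

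The core of part 1) is checking that $(C,\Delta_{C},\varepsilon_{C})$ with $\varepsilon_{C}=\varepsilon_{0}^{C}p_{0}^{C}$ satisfies coassociativity and counitality. I would verify these after composing with the structural projections onto $C_{a}\otimes C_{b}\otimes C_{c}$ (resp.\ onto $C_{d}$), since these are jointly monomorphic on the relevant finite biproducts. Using the grading $(C\otimes C\otimes C)_{n}=\oplus_{a+b+c=n}C_{a}\otimes C_{b}\otimes C_{c}$ together with (\ref{form: coro grCoalg1}), the equality $(\Delta_{C}\otimes C)\Delta_{C}=(C\otimes\Delta_{C})\Delta_{C}$ reduces to
\[
(\Delta_{a,b}^{C}\otimes C_{c})\circ\Delta_{a+b,c}^{C}=(C_{a}\otimes\Delta_{b,c}^{C})\circ\Delta_{a,b+c}^{C},
\]
which is exactly the hypothesis (\ref{form: locDelta}); likewise the two counit axioms reduce, after projecting onto $C_{d}$ and using $\varepsilon_{C}=\varepsilon_{0}^{C}p_{0}^{C}$, to (\ref{form: locEps}). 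This bookkeeping with the triple tensor product — commuting the projections $p_{\bullet}\otimes p_{\bullet}$ past $\Delta_{C}\otimes C$ and $C\otimes\Delta_{C}$ — is the main obstacle, and is where the hypothesis that $\otimes$ commutes with direct sums is invoked repeatedly.

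For part 2), assume $C$ is a graded coalgebra. Formula (\ref{form: grCoalg}) follows by composing the graded identity $\Delta_{C}i_{n}^{C}=i_{n}^{C\otimes C}\Delta_{n}$ with the codiagonal expression $i_{n}^{C\otimes C}=\sum_{a+b=n}(i_{a}^{C}\otimes i_{b}^{C})\omega_{a,b}^{C,C}$ coming from (\ref{form nablasum1}) and using $\omega_{a,b}^{C,C}\Delta_{n}=\Delta_{a,b}^{C}$. Running the computation of the previous paragraph backwards — projecting the now-assumed coalgebra axioms onto the degree $(a,b,c)$ component — yields (\ref{form: locDelta}), and projecting the counit axioms onto $C_{d}$ via (\ref{form: grCoalg}) yields (\ref{form: locEps}) with $\varepsilon_{0}^{C}=\varepsilon_{C}i_{0}^{C}$; here only the matching index ($s=d,\,t=0$ in the sum $\sum_{s+t=d}$) survives, since $\mathbf{1}$ is concentrated in degree $0$. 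To see that $\varepsilon_{C}$ is a graded homomorphism, i.e.\ $\varepsilon_{C}=\varepsilon_{C}i_{0}^{C}p_{0}^{C}$, I would set $\varepsilon'=\varepsilon_{C}i_{0}^{C}p_{0}^{C}$ and check, using (\ref{form: grCoalg}) and the just-established (\ref{form: locEps}), that $\varepsilon'$ is again a counit for $(C,\Delta_{C})$; uniqueness of the counit then forces $\varepsilon_{C}=\varepsilon'$. Finally, the ``moreover'' assertions are specializations of the local relations: coassociativity and counitality of $(C_{0},\Delta_{0,0}^{C},\varepsilon_{0}^{C})$ are (\ref{form: locDelta}) and (\ref{form: locEps}) at $a=b=c=0$ and $d=0$; the equalities $\Delta_{C}i_{0}^{C}=(i_{0}^{C}\otimes i_{0}^{C})\Delta_{0,0}^{C}$ and $\varepsilon_{C}i_{0}^{C}=\varepsilon_{0}^{C}$ (from (\ref{form: grCoalg}) at $n=0$ and the definition of $\varepsilon_{0}^{C}$) say that $i_{0}^{C}$ is a coalgebra homomorphism; and (\ref{form: locDelta}) at the triples $(0,0,n)$, $(n,0,0)$, $(0,n,0)$ gives the counital coassociativity of the coactions $\Delta_{0,n}^{C},\Delta_{n,0}^{C}$ and their compatibility, making $C_{n}$ a $C_{0}$-bicomodule, while (\ref{form: coro grCoalg1}) shows $p_{n}^{C}$ intertwines the coactions. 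None of these final checks presents difficulty beyond the indexing already handled above.
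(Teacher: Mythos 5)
The paper offers no proof of this proposition at all: it is imported with the citation \cite[Propositions 2.5 and 2.3]{AM}, so there is no in-text argument to measure yours against. Your reconstruction is the standard one and is essentially correct. Assembling $\Delta _{C}$ from the components $\Delta _{n}$ determined by $\omega _{a,b}^{C,C}\circ \Delta _{n}=\Delta _{a,b}^{C}$ in the finite biproduct $\left( C\otimes C\right) _{n}$, and then checking each coalgebra axiom degreewise by first composing with the injections $i_{n}^{C}$ (so that both sides of the identity, being composites of graded morphisms, land in a finite biproduct where the structural projections are jointly monomorphic) is exactly how such statements are proved; your reductions of coassociativity to (\ref{form: locDelta}) and of counitality to (\ref{form: locEps}), the derivation of (\ref{form: grCoalg}) from (\ref{form nablasum1}), the counit-uniqueness trick for $\varepsilon _{C}=\varepsilon _{C}i_{0}^{C}p_{0}^{C}$, and the specializations yielding the coalgebra $C_{0}$ and the $C_{0}$-bicomodule structure on $C_{n}$ are all sound.

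The one step you should tighten is the uniqueness of $\Delta _{C}$. For a competitor $\Delta ^{\prime }$ satisfying only (\ref{form: coro grCoalg1}) you indeed know every composite $\left( p_{a}^{C}\otimes p_{b}^{C}\right) \circ \Delta ^{\prime }\circ i_{n}^{C}$, but $\Delta ^{\prime }\circ i_{n}^{C}$ takes values in the \emph{infinite} coproduct $C\otimes C=\oplus _{m}\left( C\otimes C\right) _{m}$, not a priori in the finite biproduct $\left( C\otimes C\right) _{n}$, so the phrase ``because $\left( C\otimes C\right) _{n}$ is a finite biproduct'' does not by itself force $\Delta ^{\prime }\circ i_{n}^{C}=\Delta _{C}\circ i_{n}^{C}$. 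What is needed is that the whole family $\left( p_{a}^{C}\otimes p_{b}^{C}\right) _{a,b\in \mathbb{N}}$, equivalently the family of projections of $C\otimes C$ onto its homogeneous components, is jointly monomorphic; this holds whenever the canonical morphism from the coproduct to the product is a monomorphism (automatic under AB5, which is the setting in which the paper actually applies the result). Once that observation is supplied, your argument is complete.
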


\begin{lemma}
\label{lem: GrDirect coalg}Let $\mathcal{M}$ be a coabelian monoidal
category such that the tensor product commutes with direct sums.

Let $((C_{a})_{a\in \mathbb{N}},(\beta _{C_{a}}^{C_{b}})_{a,b\in \mathbb{N}%
}) $ be a direct system in $\mathcal{M}$, where, for $a\leq b$, $\beta
_{C_{a}}^{C_{b}}:C_{a}\rightarrow C_{b}$ is an epimorphism.\ Assume that
there exists a family $\left( \Delta _{a,b}^{C}\right) _{_{a,b\in
\mathbb{N}
}}$
\begin{equation*}
\Delta _{a,b}^{C}:C_{a+b}\rightarrow C_{a}\otimes C_{b},
\end{equation*}%
of morphisms and a morphism $\varepsilon _{0}^{C}:C_{0}\rightarrow \mathbf{1}
$ which satisfy (\ref{form: locDelta}), (\ref{form: locEps}),
\begin{equation}
\left( \beta _{C_{a}}^{C_{a+1}}\otimes C_{b}\right) \circ \Delta
_{a,b}^{C}=\Delta _{a+1,b}^{C}\circ \beta _{C_{a+b}}^{C_{a+b+1}}\qquad \text{%
and}\qquad \left( C_{a}\otimes \beta _{C_{b}}^{C_{b+1}}\right) \circ \Delta
_{a,b}^{C}=\Delta _{a,b+1}^{C}\circ \beta _{C_{a+b}}^{C_{a+b+1}}
\label{form: locComp Coalg}
\end{equation}%
for every $a,b,c\in
\mathbb{N}
.$ Set $C_{-1}:=0.$

Let $\left( E_{n},i_{E_{n}}^{C_{n}}\right) :=\ker \left( \beta
_{C_{n}}^{C_{n+1}}\right) $ for every $n\in
\mathbb{N}
$.

Then $C=\oplus _{n\in \mathbb{N}}C_{n}$ is a graded coalgebra, there is a
unique coalgebra structure on $\oplus _{n\in \mathbb{N}}E_{n}$ such that
\begin{equation*}
\oplus _{n\in \mathbb{N}}i_{E_{n}}^{C_{n}}:\oplus _{n\in \mathbb{N}%
}E_{n}\rightarrow \oplus _{n\in \mathbb{N}}C_{n}
\end{equation*}%
is a coalgebra homomorphism and

\begin{enumerate}
\item[1)] $E=\oplus _{n\in \mathbb{N}}E_{n}$ is a graded coalgebra such that
$\oplus _{n\in \mathbb{N}}i_{E_{n}}^{C_{n}}$ is a graded homomorphism;

\item[2)] $\left( i_{E_{a}}^{C_{a}}\otimes i_{E_{b}}^{C_{b}}\right) \circ
\Delta _{a,b}^{E}=\Delta _{a,b}^{C}\circ i_{E_{a+b}}^{C_{a+b}};$

\item[3)] $\varepsilon _{E}=\varepsilon _{0}^{C}\circ i_{E_{0}}^{C_{0}}\circ
p_{0}^{E}.$
\end{enumerate}
\end{lemma}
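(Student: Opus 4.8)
The plan is first to produce the comultiplication on $C$ and then to cut it down to $E$. The first assertion is immediate: the hypotheses furnish a family $(\Delta_{a,b}^{C})$ together with a morphism $\varepsilon_0^C\colon C_0\to\mathbf 1$ satisfying (\ref{form: locDelta}) and (\ref{form: locEps}), so Proposition \ref{lem: graded Deltaij}(1) at once yields a comultiplication $\Delta_C$ making $C=\oplus_{n}C_n$ a graded coalgebra with counit $\varepsilon_C=\varepsilon_0^C p_0^C$. The whole problem is then to transport this structure along the kernels $i_{E_n}^{C_n}$, and I would set $\varepsilon_0^E:=\varepsilon_0^C\circ i_{E_0}^{C_0}$.

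The heart of the argument, and the step I expect to be the main obstacle, is the construction of the restricted comultiplication $\Delta_{a,b}^{E}\colon E_{a+b}\to E_a\otimes E_b$ satisfying item 2). Here I use that $\mathcal M$ is coabelian, so that each functor $X\otimes(-)$ and $(-)\otimes X$ is left exact, i.e.\ preserves kernels; in particular a tensor product of monomorphisms is again a monomorphism, and for a kernel $k=\ker(g)$ one has $k\otimes X=\ker(g\otimes X)$ and $X\otimes k=\ker(X\otimes g)$. Set $\phi:=\Delta_{a,b}^{C}\circ i_{E_{a+b}}^{C_{a+b}}$. By the first relation in (\ref{form: locComp Coalg}) together with $\beta_{C_{a+b}}^{C_{a+b+1}}\circ i_{E_{a+b}}^{C_{a+b}}=0$ I obtain $(\beta_{C_a}^{C_{a+1}}\otimes C_b)\circ\phi=\Delta_{a+1,b}^{C}\circ\beta_{C_{a+b}}^{C_{a+b+1}}\circ i_{E_{a+b}}^{C_{a+b}}=0$, so $\phi$ factors uniquely as $\phi=(i_{E_a}^{C_a}\otimes C_b)\circ\psi$ through $i_{E_a}^{C_a}\otimes C_b=\ker(\beta_{C_a}^{C_{a+1}}\otimes C_b)$. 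Substituting this factorization into the second relation of (\ref{form: locComp Coalg}) and cancelling the monomorphism $i_{E_a}^{C_a}\otimes C_{b+1}$ gives $(E_a\otimes\beta_{C_b}^{C_{b+1}})\circ\psi=0$, whence $\psi$ factors through $E_a\otimes i_{E_b}^{C_b}=\ker(E_a\otimes\beta_{C_b}^{C_{b+1}})$. The composite factorization produces the desired $\Delta_{a,b}^{E}$ with $(i_{E_a}^{C_a}\otimes i_{E_b}^{C_b})\circ\Delta_{a,b}^{E}=\phi$, which is precisely item 2); it is unique since $i_{E_a}^{C_a}\otimes i_{E_b}^{C_b}$ is a monomorphism. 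This double factorization, intersecting the subobjects $E_a\otimes C_b$ and $C_a\otimes E_b$ of $C_a\otimes C_b$, is the only place where coabelianness is essential.

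With the family $(\Delta_{a,b}^{E})$ and $\varepsilon_0^E$ in hand, I would verify (\ref{form: locDelta}) and (\ref{form: locEps}) for the $E$-data. The mechanism is uniform: precompose everything with the monomorphism $i_{E_a}^{C_a}\otimes i_{E_b}^{C_b}\otimes i_{E_c}^{C_c}$ (a triple tensor of monomorphisms, again a monomorphism by left exactness) and use item 2) repeatedly to rewrite each side in terms of the $C$-data, at which point both sides agree because (\ref{form: locDelta}) holds for $C$; cancelling the monomorphism yields the relation for $E$. The counit identities (\ref{form: locEps}) for $E$ are obtained the same way, precomposing with $i_{E_d}^{C_d}$ and using the definition of $\varepsilon_0^E$, item 2), (\ref{form: locEps}) for $C$, and naturality of $r$ and $l$. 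Proposition \ref{lem: graded Deltaij}(1) then applies to the $E$-data and gives a unique $\Delta_E$ making $E=\oplus_n E_n$ a graded coalgebra with counit $\varepsilon_E=\varepsilon_0^E p_0^E=\varepsilon_0^C\circ i_{E_0}^{C_0}\circ p_0^E$, which is item 3) and the grading asserted in item 1).

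Finally, writing $f:=\oplus_n i_{E_n}^{C_n}$, which is a graded homomorphism by construction, I would check it is a coalgebra homomorphism: composing $\Delta_C\circ f$ with each injection $i_n^E$ and expanding by (\ref{form: grCoalg}) for $C$ together with item 2) yields $\Delta_C\circ f\circ i_n^E=(f\otimes f)\circ\Delta_E\circ i_n^E$ for every $n$, hence $\Delta_C\circ f=(f\otimes f)\circ\Delta_E$; compatibility of counits is immediate from item 3). This establishes item 1). For uniqueness, any coalgebra structure $(\Delta',\varepsilon')$ on $E$ making $f$ a coalgebra homomorphism must satisfy $\varepsilon'=\varepsilon_C\circ f$ and $(f\otimes f)\circ\Delta'=\Delta_C\circ f=(f\otimes f)\circ\Delta_E$; since $\otimes$ commutes with direct sums, $f\otimes f=\oplus_{a,b}(i_{E_a}^{C_a}\otimes i_{E_b}^{C_b})$ is a coproduct of the monomorphisms built above, hence itself a monomorphism in the present setting, forcing $\Delta'=\Delta_E$ and $\varepsilon'=\varepsilon_E$. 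Thus everything beyond the factorization of item 2) is bookkeeping that cancels monomorphisms to reduce each $E$-identity to its already-known $C$-counterpart.
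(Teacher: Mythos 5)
Your proposal is correct and follows essentially the same route as the paper's proof: the same two-step factorization of $\Delta _{a,b}^{C}\circ i_{E_{a+b}}^{C_{a+b}}$ first through $E_{a}\otimes C_{b}=\ker \left( \beta _{C_{a}}^{C_{a+1}}\otimes C_{b}\right) $ and then through $E_{a}\otimes E_{b}=\ker \left( E_{a}\otimes \beta _{C_{b}}^{C_{b+1}}\right) $ via left exactness of the tensor functors, the same transfer of (\ref{form: locDelta}) and (\ref{form: locEps}) to the $E$-data by cancelling monomorphisms, the same appeal to Proposition \ref{lem: graded Deltaij}, and the same final check that $\oplus _{n\in \mathbb{N}}i_{E_{n}}^{C_{n}}$ is a coalgebra homomorphism. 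The only difference is that you spell out the uniqueness of the coalgebra structure (using that the coproduct of the monomorphisms $i_{E_{a}}^{C_{a}}\otimes i_{E_{b}}^{C_{b}}$ is again a monomorphism), which the paper leaves implicit.
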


\begin{proof}
By Proposition \ref{lem: graded Deltaij}, there exists a unique morphism $%
\Delta _{C}:C\rightarrow C\otimes C$ such that (\ref{form: grCoalg}) holds.
Moreover $\left( C=\oplus _{n\in \mathbb{N}}C_{n},\Delta _{C},\varepsilon
_{C}=\varepsilon _{0}^{C}p_{0}^{C}\right) $ is a graded coalgebra.

By left exactness of the tensor functors, we have the exact sequence%
\begin{equation}
0\rightarrow E_{a}\otimes C_{b}\overset{i_{E_{a}}^{C_{a}}\otimes C_{b}}{%
\longrightarrow }C_{a}\otimes C_{b}\overset{\beta _{C_{a}}^{C_{a+1}}\otimes
C_{b}}{\longrightarrow }C_{a+1}\otimes C_{b}.  \label{form: GrDirect1Co}
\end{equation}%
From%
\begin{equation}
\left( \beta _{C_{a}}^{C_{a+1}}\otimes C_{b}\right) \circ \Delta
_{a,b}^{C}\circ i_{E_{a+b}}^{C_{a+b}}\overset{\text{(\ref{form: locComp
Coalg})}}{=}\Delta _{a+1,b}^{C}\circ \beta _{C_{a+b}}^{C_{a+b+1}}\circ
i_{E_{a+b}}^{C_{a+b}}=0.  \label{form: GrDirect2Co}
\end{equation}%
and, by exactness of (\ref{form: GrDirect1Co}), there is a unique morphism $%
\alpha _{a,b}:E_{a+b}\rightarrow E_{a}\otimes C_{b}$ such that%
\begin{equation*}
\left( i_{E_{a}}^{C_{a}}\otimes C_{b}\right) \circ \alpha _{a,b}=\Delta
_{a,b}^{C}\circ i_{E_{a+b}}^{C_{a+b}}.
\end{equation*}

By left exactness of the tensor functors, we have the exact sequence%
\begin{equation}
0\rightarrow E_{a}\otimes E_{b}\overset{E_{a}\otimes i_{E_{b}}^{C_{b}}}{%
\longrightarrow }E_{a}\otimes C_{b}\overset{E_{a}\otimes \beta
_{C_{b}}^{C_{b+1}}}{\longrightarrow }E_{a}\otimes C_{b+1}.
\label{form: GrDirect3Co}
\end{equation}%
We obtain%
\begin{eqnarray*}
\left( i_{E_{a}}^{C_{a}}\otimes C_{b+1}\right) \circ \left( E_{a}\otimes
\beta _{C_{b}}^{C_{b+1}}\right) \circ \alpha _{a,b} &=&\left( C_{a}\otimes
\beta _{C_{b}}^{C_{b+1}}\right) \circ \left( i_{E_{a}}^{C_{a}}\otimes
C_{b}\right) \circ \alpha _{a,b} \\
&=&\left( C_{a}\otimes \beta _{C_{b}}^{C_{b+1}}\right) \circ \Delta
_{a,b}^{C}\circ i_{E_{a+b}}^{C_{a+b}}=0
\end{eqnarray*}%
where the last equality is analogue to (\ref{form: GrDirect2Co}). Since $%
i_{E_{a}}^{C_{a}}\otimes C_{b+1}$ is a monomorphism, we deduce that $\left(
E_{a}\otimes \beta _{C_{b}}^{C_{b+1}}\right) \circ \alpha _{a,b}=0$ so that,
by exactness of (\ref{form: GrDirect3Co}), there is a unique morphism $%
\Delta _{a,b}^{E}:E_{a+b}\rightarrow E_{a}\otimes E_{b},$ such that%
\begin{equation*}
\left( E_{a}\otimes i_{E_{b}}^{C_{b}}\right) \circ \Delta _{a,b}^{E}=\alpha
_{a,b}.
\end{equation*}%
We compute%
\begin{equation*}
\left( i_{E_{a}}^{C_{a}}\otimes i_{E_{b}}^{C_{b}}\right) \circ \Delta
_{a,b}^{E}=\left( i_{E_{a}}^{C_{a}}\otimes C_{b}\right) \circ \left(
E_{a}\otimes i_{E_{b}}^{C_{b}}\right) \circ \Delta _{a,b}^{E}=\left(
i_{E_{a}}^{C_{a}}\otimes C_{b}\right) \circ \alpha _{a,b}=\Delta
_{a,b}^{C}\circ i_{E_{a+b}}^{C_{a+b}}
\end{equation*}

so that $2)$ holds true.

Let us prove that $\Delta _{a,b}^{E}$ fulfills (\ref{form: locDelta}). By $%
2),$ we infer%
\begin{eqnarray*}
\left( i_{E_{a}}^{C_{a}}\otimes i_{E_{b}}^{C_{b}}\otimes
i_{E_{c}}^{C_{c}}\right) \circ \left( \Delta _{a,b}^{E}\otimes E_{c}\right)
\circ \Delta _{a+b,c}^{E} &=&\left( \Delta _{a,b}^{C}\otimes C_{c}\right)
\circ \Delta _{a+b,c}^{C}\circ i_{E_{a+b+c}}^{C_{a+b+c}}, \\
\left( i_{E_{a}}^{C_{a}}\otimes i_{E_{b}}^{C_{b}}\otimes
i_{E_{c}}^{C_{c}}\right) \circ \left( E_{a}\otimes \Delta _{b,c}^{E}\right)
\circ \Delta _{a,b+c}^{E} &=&\left( C_{a}\otimes \Delta _{b,c}^{C}\right)
\circ \Delta _{a,b+c}^{C}\circ i_{E_{a+b+c}}^{C_{a+b+c}}.
\end{eqnarray*}%
Since $C$ fulfills (\ref{form: locDelta}) and since $i_{E_{a}}^{C_{a}}%
\otimes i_{E_{b}}^{C_{b}}\otimes i_{E_{c}}^{C_{c}}$ is a monomorphism, we
get (\ref{form: locDelta}) for $E$. \newline

Let $\varepsilon _{0}^{E}:=\varepsilon _{0}^{C}\circ i_{E_{0}}^{C_{0}}.$ Let
us prove that (\ref{form: locEps}) hold for $E$. By $2),$ we have%
\begin{eqnarray*}
\left( i_{E_{d}}^{C_{d}}\otimes \mathbf{1}\right) \circ \left( E_{d}\otimes
\varepsilon _{0}^{E}\right) \circ \Delta _{d,0}^{E} &=&\left( C_{d}\otimes
\varepsilon _{0}^{C}\right) \circ \left( i_{E_{d}}^{C_{d}}\otimes
i_{E_{0}}^{C_{0}}\right) \circ \Delta _{d,0}^{E}=\left( C_{d}\otimes
\varepsilon _{0}^{C}\right) \circ \Delta _{d,0}^{C}\circ i_{E_{d}}^{C_{d}},
\\
\left( i_{E_{d}}^{C_{d}}\otimes \mathbf{1}\right) \circ r_{E_{d}}^{-1}
&=&r_{C_{d}}^{-1}\circ i_{E_{d}}^{C_{d}}.
\end{eqnarray*}%
By (\ref{form: locEps}) for $C$ and since $i_{E_{d}}^{C_{d}}\otimes \mathbf{1%
}$ is a monomorphism, we get the left equation of (\ref{form: locEps}) for $%
E $. \newline
Similarly one gets the other equation. Thus, by applying Proposition \ref%
{lem: graded Deltaij}, we conclude that $E$ is a graded coalgebra and $3)$
holds true.

It remains to prove that $i:=\oplus _{n\in \mathbb{N}}i_{E_{n}}^{C_{n}}$ is
a coalgebra homomorphism. For every $a,b\in
\mathbb{N}
$, we have%
\begin{eqnarray*}
&&\left( i\otimes i\right) \circ \Delta _{E}\circ i_{n}^{E}\overset{(\ref%
{form: grCoalg})}{=}\left( i\otimes i\right) \circ \sum_{a+b=n}\left(
i_{a}^{E}\otimes i_{b}^{E}\right) \circ \Delta _{a,b}^{E} \\
&=&\sum_{a+b=n}\left( i_{a}^{C}\otimes i_{b}^{C}\right) \circ \left(
i_{E_{a}}^{C_{a}}\otimes i_{E_{b}}^{C_{b}}\right) \circ \Delta _{a,b}^{E}%
\overset{2)}{=}\sum_{a+b=n}\left( i_{a}^{C}\otimes i_{b}^{C}\right) \circ
\Delta _{a,b}^{C}\circ i_{E_{a+b}}^{C_{a+b}} \\
&&\overset{(\ref{form: grCoalg})}{=}\Delta _{C}\circ i_{n}^{C}\circ
i_{E_{n}}^{C_{n}}=\Delta _{C}\circ i\circ i_{n}^{E}
\end{eqnarray*}%
and%
\begin{equation*}
\varepsilon _{C}\circ i\circ i_{n}^{E}=\varepsilon _{C}\circ i_{n}^{C}\circ
i_{E_{n}}^{C_{n}}=\varepsilon _{0}^{C}\circ p_{0}^{C}\circ i_{n}^{C}\circ
i_{E_{n}}^{C_{n}}=\delta _{n,0}\varepsilon _{0}^{C}\circ
i_{E_{0}}^{C_{0}}=\varepsilon _{0}^{C}\circ i_{E_{0}}^{C_{0}}\circ
p_{0}^{E}\circ i_{n}^{E}=\varepsilon _{E}\circ i_{n}^{E}
\end{equation*}%
so that $\left( i\otimes i\right) \circ \Delta _{E}=\Delta _{C}\circ i$ and $%
\varepsilon _{C}\circ i=\varepsilon _{E}.$ Thus $i$ is a coalgebra
homomorphism.
\end{proof}

\begin{lemma}
\label{lem: GrDirect coalg Inverse}Let $\mathcal{M}$ be a coabelian monoidal
category such that the tensor product commutes with direct sums.

Let $((C_{a})_{a\in \mathbb{N}},(\beta _{C_{a}}^{C_{b}})_{a,b\in \mathbb{N}%
}) $ be an inverse system in $\mathcal{M}$, where, for $a\leq b$, $\beta
_{C_{b}}^{C_{a}}:C_{b}\rightarrow C_{a}$ is an epimorphism.\ Assume that
there exists a family $\left( \Delta _{a,b}^{C}\right) _{_{a,b\in
\mathbb{N}
}}$
\begin{equation*}
\Delta _{a,b}^{C}:C_{a+b}\rightarrow C_{a}\otimes C_{b},
\end{equation*}%
of morphisms and a morphism $\varepsilon _{0}^{C}:C_{0}\rightarrow \mathbf{1}
$ which satisfy (\ref{form: locDelta}), (\ref{form: locEps}),
\begin{equation}
\left( \beta _{C_{a+1}}^{C_{a}}\otimes C_{b}\right) \circ \Delta
_{a+1,b}^{C}=\Delta _{a,b}^{C}\circ \beta _{C_{a+b+1}}^{C_{a+b}}\qquad \text{%
and}\qquad \left( C_{a}\otimes \beta _{C_{b+1}}^{C_{b}}\right) \circ \Delta
_{a,b+1}^{C}=\Delta _{a,b}^{C}\circ \beta _{C_{a+b+1}}^{C_{a+b}}
\label{form: locComp Coalg Inv}
\end{equation}%
for every $a,b,c\in
\mathbb{N}
.$ Set $C_{-1}:=0.$

Let $\left( E_{n},i_{E_{n}}^{C_{n}}\right) :=\ker \left( \beta
_{C_{n}}^{C_{n-1}}\right) $ for every $n\in
\mathbb{N}
$.

Then $C=\oplus _{n\in \mathbb{N}}C_{n}$ is a graded coalgebra, there is a
unique coalgebra structure on $\oplus _{n\in \mathbb{N}}E_{n}$ such that
\begin{equation*}
\oplus _{n\in \mathbb{N}}i_{E_{n}}^{C_{n}}:\oplus _{n\in \mathbb{N}%
}E_{n}\rightarrow \oplus _{n\in \mathbb{N}}C_{n}
\end{equation*}%
is a coalgebra homomorphism and

\begin{enumerate}
\item[1)] $E=\oplus _{n\in \mathbb{N}}E_{n}$ is a graded coalgebra such that
$\oplus _{n\in \mathbb{N}}i_{E_{n}}^{C_{n}}$ is a graded homomorphism;

\item[2)] $\left( i_{E_{a}}^{C_{a}}\otimes i_{E_{b}}^{C_{b}}\right) \circ
\Delta _{a,b}^{E}=\Delta _{a,b}^{C}\circ i_{E_{a+b}}^{C_{a+b}};$

\item[3)] $\varepsilon _{E}=\varepsilon _{0}^{C}\circ i_{E_{0}}^{C_{0}}\circ
p_{0}^{E}.$
\end{enumerate}
\end{lemma}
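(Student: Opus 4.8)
The plan is to mirror the proof of Lemma \ref{lem: GrDirect coalg} almost verbatim, replacing every use of the direct-system compatibility (\ref{form: locComp Coalg}) by its inverse-system counterpart (\ref{form: locComp Coalg Inv}), and replacing the kernels of $\beta_{C_n}^{C_{n+1}}$ by the kernels of $\beta_{C_n}^{C_{n-1}}$. As a first step I would invoke Proposition \ref{lem: graded Deltaij} exactly as before: since (\ref{form: locDelta}) and (\ref{form: locEps}) hold by hypothesis, there is a unique $\Delta_C:C\to C\otimes C$ such that (\ref{form: grCoalg}) holds and $\left(C=\oplus_{n\in\mathbb{N}}C_n,\Delta_C,\varepsilon_C=\varepsilon_0^C p_0^C\right)$ is a graded coalgebra. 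This part does not see the system structure at all, so it carries over unchanged.

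Next I would construct the comultiplication on $E$ by the same two-step factorization. Now $\left(E_a,i_{E_a}^{C_a}\right)=\ker\left(\beta_{C_a}^{C_{a-1}}\right)$, so by left exactness of the tensor functors the analogue of (\ref{form: GrDirect1Co}) reads
\begin{equation*}
0\rightarrow E_a\otimes C_b\overset{i_{E_a}^{C_a}\otimes C_b}{\longrightarrow}C_a\otimes C_b\overset{\beta_{C_a}^{C_{a-1}}\otimes C_b}{\longrightarrow}C_{a-1}\otimes C_b.
\end{equation*}
Using the first equation of (\ref{form: locComp Coalg Inv}) with $a$ in place of $a+1$ (i.e. $\left(\beta_{C_a}^{C_{a-1}}\otimes C_b\right)\circ\Delta_{a,b}^C=\Delta_{a-1,b}^C\circ\beta_{C_{a+b}}^{C_{a+b-1}}$) I get $\left(\beta_{C_a}^{C_{a-1}}\otimes C_b\right)\circ\Delta_{a,b}^C\circ i_{E_{a+b}}^{C_{a+b}}=\Delta_{a-1,b}^C\circ\beta_{C_{a+b}}^{C_{a+b-1}}\circ i_{E_{a+b}}^{C_{a+b}}=0$, so $\Delta_{a,b}^C\circ i_{E_{a+b}}^{C_{a+b}}$ factors uniquely through the kernel, yielding $\alpha_{a,b}:E_{a+b}\to E_a\otimes C_b$ with $\left(i_{E_a}^{C_a}\otimes C_b\right)\circ\alpha_{a,b}=\Delta_{a,b}^C\circ i_{E_{a+b}}^{C_{a+b}}$. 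Then, exactly as before, using the second equation of (\ref{form: locComp Coalg Inv}) and the left-exact sequence obtained by tensoring $E_a$ with $0\to E_b\to C_b\to C_{b-1}$, I show $\left(E_a\otimes\beta_{C_b}^{C_{b-1}}\right)\circ\alpha_{a,b}=0$ and factor once more to obtain the desired $\Delta_{a,b}^E:E_{a+b}\to E_a\otimes E_b$ satisfying $2)$.

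The remaining verifications are identical to those in Lemma \ref{lem: GrDirect coalg}: relation $2)$ together with the monomorphism $i_{E_a}^{C_a}\otimes i_{E_b}^{C_b}\otimes i_{E_c}^{C_c}$ transports (\ref{form: locDelta}) from $C$ to $E$; setting $\varepsilon_0^E:=\varepsilon_0^C\circ i_{E_0}^{C_0}$ and using the monomorphism $i_{E_d}^{C_d}\otimes\mathbf{1}$ transports (\ref{form: locEps}); then Proposition \ref{lem: graded Deltaij} makes $E$ a graded coalgebra and gives $3)$. Finally the computation with (\ref{form: grCoalg}) and $2)$ shows $i:=\oplus_{n\in\mathbb{N}}i_{E_n}^{C_n}$ satisfies $(i\otimes i)\circ\Delta_E=\Delta_C\circ i$ and $\varepsilon_C\circ i=\varepsilon_E$, so $i$ is a coalgebra homomorphism, proving $1)$.

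**The only genuine point of attention** — and the reason the lemma is not a literal copy — is bookkeeping of indices: in the inverse-system case the connecting maps lower degree, so one must apply (\ref{form: locComp Coalg Inv}) with the correct shift (the kernel sits at level $a$ mapping down to $a-1$, matched against $\Delta_{a,b}^C$ rather than $\Delta_{a+1,b}^C$), and one must check the edge case $E_0=\ker\left(\beta_{C_0}^{C_{-1}}\right)=\ker(C_0\to 0)=C_0$ is consistent with the convention $C_{-1}:=0$. Since no new structural idea is required, I expect no real obstacle; the proof is, as the authors themselves remark in the introduction, an easy adaptation, and I would present it by explicitly writing out only the two factorization steps and then referring to Lemma \ref{lem: GrDirect coalg} for the transport of (\ref{form: locDelta}), (\ref{form: locEps}) and the homomorphism property.
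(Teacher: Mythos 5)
Your proposal is correct and is exactly the adaptation the paper intends: the authors' own proof of this lemma consists of the single remark that it is similar to Lemma \ref{lem: GrDirect coalg}, and your two factorization steps, with the index shift in (\ref{form: locComp Coalg Inv}) and the observation that $E_{a+b}=\ker(\beta_{C_{a+b}}^{C_{a+b-1}})$ makes the relevant composite vanish, are precisely the required changes. The edge case $E_0=C_0$ via $C_{-1}:=0$ is also handled correctly.
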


\begin{proof}
It is similar to that of Lemma \ref{lem: GrDirect coalg}.
\end{proof}

\begin{lemma}
\label{lem: Bourbaki co}Consider the following commutative diagram in an
abelian category $\mathcal{C}$.
\begin{equation*}
\xymatrix{ 0 \ar[r] & \mathrm{ker}(\beta) \ar[r]^{i} & Z \ar[d]_{g}
\ar[r]^{\beta} & Y \ar[d]^{f} \\ & & W \ar[r]^{\alpha} & X }
\end{equation*}%
Assume that both $f$ and $g\circ i$ are monomorphisms. Then $g$ is a
monomorphism.
\end{lemma}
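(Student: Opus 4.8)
The plan is to reduce the claim to showing that the kernel of $g$ vanishes, which in the abelian category $\mathcal{C}$ is equivalent to $g$ being a monomorphism. Concretely, I would introduce $(K,k):=\ker(g)$ and prove that $k=0$ (equivalently $K=0$), working entirely through universal properties so that no element chase or appeal to an embedding theorem is needed.

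First I would exploit the monomorphism $f$. By definition of the kernel we have $g\circ k=0$, so composing with $\alpha$ and invoking the commutativity $\alpha\circ g=f\circ\beta$ of the square gives $f\circ\beta\circ k=\alpha\circ g\circ k=0$. Since $f$ is a monomorphism, this forces $\beta\circ k=0$.

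Next I would feed this relation into the kernel of $\beta$. Because $\beta\circ k=0$, the universal property of $(\ker(\beta),i)$ produces a unique morphism $h:K\to\ker(\beta)$ with $i\circ h=k$. Then $g\circ i\circ h=g\circ k=0$, and since $g\circ i$ is assumed to be a monomorphism, I conclude $h=0$, whence $k=i\circ h=0$. This shows $\ker(g)=0$, so $g$ is a monomorphism.

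The only (modest) obstacle is applying the two monomorphism hypotheses in the correct order: first use $f$ to pass from $g\circ k=0$ to $\beta\circ k=0$, and only afterwards use $g\circ i$, once $k$ has been factored through $\ker(\beta)$. No exactness beyond the kernel universal property is required, so the argument is valid in an arbitrary abelian category.
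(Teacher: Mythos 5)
Your proof is correct and follows essentially the same route as the paper's: the paper tests against an arbitrary morphism $\xi:T\to Z$ with $g\circ\xi=0$, uses the monomorphism $f$ to get $\beta\circ\xi=0$, factors $\xi$ through $\ker(\beta)$, and then kills the factorization using that $g\circ i$ is a monomorphism; you run the identical argument with the specific test object $K=\ker(g)$, which is equivalent in an abelian category. No issues.
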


\begin{proof}
Let $\xi :T\rightarrow Z$ be a morphism such that $g\circ \xi =0.$ Then $%
f\circ \beta \circ \xi =\alpha \circ g\circ \xi =0$ so that, since $f$ is a
monomorphism, we get $\beta \circ \xi =0.$ By the universal property of
kernels $\xi $ factors to a map $\overline{\xi }:T\rightarrow \mathrm{\ker }%
\left( \beta \right) $ such that $i\circ \overline{\xi }=\xi .$

Now $g\circ i\circ \overline{\xi }=g\circ \xi =0$ so that, since $g\circ i$
is a monomorphism, we get $\overline{\xi }=0.$
\end{proof}

\begin{theorem}
\label{teo: strongly Alg co}With hypothesis and notations of Lemma \ref{lem:
GrDirect coalg Inverse}, the following assertions are equivalent.

\begin{enumerate}
\item[$\left( 1\right) $] $C=\oplus _{n\in \mathbb{N}}C_{n}$ is a strongly $%
\mathbb{N}
$-graded coalgebra.

\item[$\left( 2\right) $] $E=\oplus _{n\in \mathbb{N}}E_{n}$ is a strongly $%
\mathbb{N}
$-graded coalgebra.
\end{enumerate}
\end{theorem}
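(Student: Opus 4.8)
The plan is to prove the two implications separately, relying throughout on the data recorded in Lemma \ref{lem: GrDirect coalg Inverse}: the identity in part $2)$, namely $\left( i_{E_a}^{C_a}\otimes i_{E_b}^{C_b}\right)\circ\Delta_{a,b}^E=\Delta_{a,b}^C\circ i_{E_{a+b}}^{C_{a+b}}$, together with the counit relations (\ref{form: locEps}) and the compatibility (\ref{form: locComp Coalg Inv}). Two standing facts will be used repeatedly: each $i_{E_n}^{C_n}$ is a monomorphism, being a kernel, and, since the tensor functors are left exact, a tensor product of two monomorphisms is again a monomorphism; in particular $i_{E_i}^{C_i}\otimes i_{E_j}^{C_j}$ is a monomorphism.

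For $(1)\Rightarrow(2)$, I would assume each $\Delta_{i,j}^C$ is a monomorphism and fix $i,j$. By part $2)$ the morphism $\left( i_{E_i}^{C_i}\otimes i_{E_j}^{C_j}\right)\circ\Delta_{i,j}^E$ equals $\Delta_{i,j}^C\circ i_{E_{i+j}}^{C_{i+j}}$, which is a composite of monomorphisms and hence a monomorphism. Since a left factor of a monomorphism is a monomorphism, $\Delta_{i,j}^E$ is a monomorphism, and by Definition \ref{def: strongly grCoalg} this gives $(2)$.

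For $(2)\Rightarrow(1)$, I would assume each $\Delta_{i,j}^E$ is a monomorphism and prove by induction on $i$ that $\Delta_{i,j}^C$ is a monomorphism for every $j$. In the base case $i=0$, the right equation of (\ref{form: locEps}) reads $\left(\varepsilon_0^C\otimes C_j\right)\circ\Delta_{0,j}^C=l_{C_j}^{-1}$, whose right-hand side is an isomorphism, so $\Delta_{0,j}^C$ is a (split) monomorphism. For the inductive step I fix $i\geq1$ and $j$ and assume $\Delta_{i-1,j}^C$ is a monomorphism. I then apply Lemma \ref{lem: Bourbaki co} to the commutative square with $Z=C_{i+j}$, $g=\Delta_{i,j}^C\colon Z\to C_i\otimes C_j$, $\beta=\beta_{C_{i+j}}^{C_{i+j-1}}\colon Z\to C_{i+j-1}$, $f=\Delta_{i-1,j}^C\colon C_{i+j-1}\to C_{i-1}\otimes C_j$ and $\alpha=\beta_{C_i}^{C_{i-1}}\otimes C_j\colon C_i\otimes C_j\to C_{i-1}\otimes C_j$; commutativity $\alpha\circ g=f\circ\beta$ is exactly the first identity in (\ref{form: locComp Coalg Inv}) with $a=i-1$ and $b=j$. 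Here $\mathrm{ker}(\beta)=\left(E_{i+j},i_{E_{i+j}}^{C_{i+j}}\right)$ by definition, $f$ is a monomorphism by the inductive hypothesis, and the restriction $g\circ i_{E_{i+j}}^{C_{i+j}}=\Delta_{i,j}^C\circ i_{E_{i+j}}^{C_{i+j}}=\left( i_{E_i}^{C_i}\otimes i_{E_j}^{C_j}\right)\circ\Delta_{i,j}^E$ (using part $2)$) is a composite of the monomorphisms $i_{E_i}^{C_i}\otimes i_{E_j}^{C_j}$ and $\Delta_{i,j}^E$, hence a monomorphism. Lemma \ref{lem: Bourbaki co} then yields that $g=\Delta_{i,j}^C$ is a monomorphism, completing the induction and thus $(1)$.

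The main obstacle is organizing the inductive step so that Lemma \ref{lem: Bourbaki co} applies: one must take $\beta$ to be the transition morphism $\beta_{C_{i+j}}^{C_{i+j-1}}$ of the inverse system, so that its kernel is precisely $E_{i+j}$ and part $2)$ identifies the restriction $g\circ i$ with the manifestly monomorphic map $\left( i_{E_i}^{C_i}\otimes i_{E_j}^{C_j}\right)\circ\Delta_{i,j}^E$, while reading off the horizontal comparison $f=\Delta_{i-1,j}^C$ and $\alpha=\beta_{C_i}^{C_{i-1}}\otimes C_j$ from the compatibility relation (\ref{form: locComp Coalg Inv}). Once the square is assembled this way the three hypotheses of Lemma \ref{lem: Bourbaki co} are immediate, and the induction lowers the first index of $\Delta^C$ one step at a time down to the base case governed by the counit.
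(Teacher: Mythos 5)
Your proof is correct and follows essentially the same route as the paper's: the implication $(1)\Rightarrow(2)$ from the identity in part $2)$ of Lemma \ref{lem: GrDirect coalg Inverse}, and for $(2)\Rightarrow(1)$ an induction whose base case uses (\ref{form: locEps}) and whose inductive step applies Lemma \ref{lem: Bourbaki co} to exactly the same commutative square built from (\ref{form: locComp Coalg Inv}), with $\ker(\beta)=E_{i+j}$ and part $2)$ identifying the restriction as a composite of monomorphisms. The only difference is that the paper first invokes an external result (\cite[Theorem 2.22]{AM}) to reduce to checking $\Delta_{a,1}^{C}$, whereas you run the identical induction for arbitrary second index $j$, which makes the argument slightly more self-contained at no extra cost.
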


\begin{proof}
Let $((C_{a})_{a\in \mathbb{N}},(\beta _{C_{a}}^{C_{b}})_{a,b\in \mathbb{N}%
}) $ be an inverse system in $\mathcal{M}$, where, for $a\leq b$, $\beta
_{C_{b}}^{C_{a}}:C_{b}\rightarrow C_{a}$ is an epimorphism.\ Assume that
there exists a family $\left( \Delta _{a,b}^{C}\right) _{_{a,b\in
\mathbb{N}
}}$
\begin{equation*}
\Delta _{a,b}^{C}:C_{a+b}\rightarrow C_{a}\otimes C_{b},
\end{equation*}%
of morphisms and a morphism $\varepsilon _{0}^{C}:C_{0}\rightarrow \mathbf{1}
$ which satisfy (\ref{form: locDelta}), (\ref{form: locEps}) and (\ref{form:
locComp Coalg Inv}). Let $\left( E_{n},i_{E_{n}}^{C_{n}}\right) :=\ker
\left( \beta _{C_{n}}^{C_{n-1}}\right) $ for every $n\in
\mathbb{N}
$.

$\left( 1\right) \Rightarrow \left( 2\right) $ It follows from $\left(
i_{E_{a}}^{C_{a}}\otimes i_{E_{b}}^{C_{b}}\right) \circ \Delta
_{a,b}^{E}=\Delta _{a,b}^{C}\circ i_{E_{a+b}}^{C_{a+b}}$ which holds in view
of Lemma \ref{lem: GrDirect coalg Inverse}.

$\left( 2\right) \Rightarrow \left( 1\right) $ By assumption, $\Delta
_{a,b}^{E}$ is a monomorphism for every $a,b\in
\mathbb{N}
.$

In view of \cite[Theorem 2.22]{AM}, it is enough to prove that $\Delta
_{a,1}^{C}:C_{a+1}\rightarrow C_{a}\otimes C_{1}$ is a monomorphism by
induction on $a\in
\mathbb{N}
$.

$a=0)$ From%
\begin{equation*}
\left( \varepsilon _{0}\otimes C_{1}\right) \circ \Delta _{0,1}^{C}\overset{%
\text{(\ref{form: locEps})}}{=}l_{C_{1}}^{-1}
\end{equation*}%
we deduce that $\Delta _{0,1}^{C}$ is a monomorphism.

$a-1\Rightarrow a)$ Since $\Delta _{a,1}^{E}$ and $i_{E_{a}}^{C_{a}}\otimes
i_{E_{1}}^{C_{1}}$ are monomorphisms and%
\begin{equation*}
\left( i_{E_{a}}^{C_{a}}\otimes i_{E_{1}}^{C_{1}}\right) \circ \Delta
_{a,1}^{E}=\Delta _{a,1}^{C}\circ i_{E_{a+1}}^{C_{a+1}}
\end{equation*}
which holds in view of Lemma \ref{lem: GrDirect coalg Inverse}, we deduce
that $\Delta _{a,1}^{C}\circ i_{E_{a+1}}^{C_{a+1}}$ is a monomorphism too.
From (\ref{form: locComp Coalg Inv}), we get%
\begin{equation*}
\left( \beta _{C_{a}}^{C_{a-1}}\otimes C_{1}\right) \circ \Delta
_{a,1}^{C}=\Delta _{a-1,1}^{C}\circ \beta _{C_{a+1}}^{C_{a}}.
\end{equation*}%
Hence by inductive hypothesis, we can apply Lemma \ref{lem: Bourbaki co} to
the following commutative diagram:
\begin{equation*}
\xymatrix{ 0 \ar[r] & E_{a+1} \ar[r]^{i_{E_{a+1}}^{C_{a+1}}} & C_{a+1}
\ar[d]_{\Delta _{a,1}^{C}} \ar[rr]^{\beta _{C_{a+1}}^{C_{a}}} && C_{a}
\ar[d]^{\Delta _{a-1,1}^{C}} \\ & & C_a\otimes C_1 \ar[rr]^{\beta
_{C_{a}}^{C_{a-1}}\otimes C_{1}} && C_{a-1}\otimes C_1 }
\end{equation*}
\end{proof}

\begin{claim}
\label{claim: wedge}Let $\mathcal{M}$ be a coabelian monoidal category.%
\newline
Let $(C,i_{C}^{E})$ and $(D,i_{D}^{E})$ be two subobjects of a coalgebra $%
(E,\Delta ,\varepsilon )$. Set
\begin{gather*}
\Delta _{C,D}:=(p_{C}^{E}\otimes p_{D}^{E})\Delta :E\rightarrow \frac{E}{C}%
\otimes \frac{E}{D} \\
(C\wedge _{E}D,i_{C\wedge _{E}D}^{E})=\ker \left( \Delta _{C,D}\right)
,\qquad i_{C\wedge _{E}D}^{E}:C\wedge _{E}D\rightarrow E \\
(\frac{E}{C\wedge _{E}D},p_{C\wedge _{E}D}^{E})=\mathrm{\mathrm{Coker}}%
\left( i_{C\wedge _{E}D}^{E}\right) =\text{\textrm{Im}}\left( \Delta
_{C,D}\right) ,\qquad p_{C\wedge _{E}D}^{E}:E\rightarrow \frac{E}{C\wedge
_{E}D}
\end{gather*}%
Moreover, we have the following exact sequence:%
\begin{equation}
0\longrightarrow C\wedge _{E}D\overset{i_{C\wedge _{E}D}^{E}}{%
\longrightarrow }E\overset{p_{C\wedge _{E}D}^{E}}{\longrightarrow }\frac{E}{%
C\wedge _{E}D}\longrightarrow 0.  \label{ec:wedge}
\end{equation}%
Since $(\frac{E}{C\wedge _{E}D},p_{C\wedge _{E}D}^{E})=\mathrm{\mathrm{Coker}%
}\left( i_{C\wedge _{E}D}^{E}\right) \ $and $\Delta _{C,D}\circ i_{C\wedge
_{E}D}^{E}=0$, by the universal property of the cokernel, there is a unique
morphism $\overline{\Delta }_{C,D}:\frac{E}{C\wedge _{E}D}\rightarrow \frac{E%
}{C}\otimes \frac{E}{D}$ such that the following diagram
\begin{equation*}
\diagComultWedge%
\end{equation*}%
is commutative. Since $(\frac{E}{C\wedge _{E}D},p_{C\wedge _{E}D}^{E})=$%
\textrm{Im}$\left( \Delta _{C,D}\right) ,$ it comes out that $\overline{%
\Delta }_{C,D}$ is a monomorphism.\newline
Assume now that $(C,i_{C}^{E})$ and $(D,i_{D}^{E})$ are two subcoalgebras of
$(E,\Delta ,\varepsilon )$. Since $\Delta _{C,D}\in {^{E}{\mathcal{M}}^{E}}{,%
}$ it is straightforward to prove that $C\wedge _{E}D$ is a coalgebra and
that $i_{C\wedge _{E}D}^{E}$ is a coalgebra homomorphism.\newline
Consider the case $C=0.$\newline
Since $p_{D}^{E}$ is a morphism in ${^{E}{\mathcal{M}}}$, we have
\begin{equation*}
\Delta _{0,D}=(\mathrm{Id}_{E}\otimes p_{D}^{E})\circ \Delta =\rho
_{E/D}^{l}\circ p_{D}^{E}.
\end{equation*}%
Since $\rho _{E/D}^{l}$ is a monomorphism, we deduce that%
\begin{equation*}
(0\wedge _{E}D,i_{0\wedge _{E}D}^{E})=\ker \left( \Delta _{0,D}\right) =\ker
\left( p_{D}^{E}\right) =\left( D,i_{D}^{E}\right) .
\end{equation*}%
Analogously, in the case $D=0,$ one has%
\begin{equation*}
(C\wedge _{E}0,i_{C\wedge _{E}0}^{E})=\left( C,i_{C}^{E}\right) .
\end{equation*}
\end{claim}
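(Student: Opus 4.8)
The plan is to derive each assertion formally from the universal properties of kernel and cokernel, since $\mathcal{M}$ is abelian (being coabelian) and the only genuinely coalgebraic input is the subcoalgebra claim. First I would record the exact sequence (\ref{ec:wedge}). By construction $(C\wedge_E D, i_{C\wedge_E D}^E) = \ker(\Delta_{C,D})$, so $i_{C\wedge_E D}^E$ is a monomorphism, while $(\frac{E}{C\wedge_E D}, p_{C\wedge_E D}^E)$ is defined to be its cokernel; in any abelian category the sequence $0 \to \ker f \to E \to \mathrm{Coker}(\ker f) \to 0$ is short exact, which is precisely (\ref{ec:wedge}).

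Next I would construct $\overline{\Delta}_{C,D}$ and show it is monic. Since the inclusion of a kernel is annihilated by the map, $\Delta_{C,D}\circ i_{C\wedge_E D}^E = 0$, and $p_{C\wedge_E D}^E$ is the cokernel of $i_{C\wedge_E D}^E$; the universal property of the cokernel then yields a unique $\overline{\Delta}_{C,D}$ with $\overline{\Delta}_{C,D}\circ p_{C\wedge_E D}^E = \Delta_{C,D}$, which is exactly the commutativity of the displayed diagram. For injectivity I would invoke the canonical factorization of a morphism through its image in an abelian category: this identifies $\frac{E}{C\wedge_E D} = \mathrm{Coker}(i_{C\wedge_E D}^E)$ with $\mathrm{Im}(\Delta_{C,D})$ and, under this identification, $\overline{\Delta}_{C,D}$ with the structural monomorphism $\mathrm{Im}(\Delta_{C,D}) \hookrightarrow \frac{E}{C}\otimes\frac{E}{D}$. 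Hence $\overline{\Delta}_{C,D}$ is a monomorphism.

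For the subcoalgebra statement I would pass to the category ${}^{E}\mathcal{M}^{E}$ of $E$-bicomodules. When $C$ and $D$ are subcoalgebras they are sub-bicomodules of $E$, so $\frac{E}{C}$ and $\frac{E}{D}$ are $E$-bicomodules, $p_{C}^{E}$ and $p_{D}^{E}$ are bicomodule morphisms, and therefore $\Delta_{C,D} = (p_C^E\otimes p_D^E)\Delta$ is a morphism in ${}^{E}\mathcal{M}^{E}$. Because $\mathcal{M}$ is coabelian the tensor functors are left exact, so kernels in ${}^{E}\mathcal{M}^{E}$ are computed as in $\mathcal{M}$; thus $C\wedge_E D$ is a sub-bicomodule of $E$. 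I would then use the bicomodule structure together with coassociativity to corestrict $\Delta_E$ to a map $C\wedge_E D \to (C\wedge_E D)\otimes(C\wedge_E D)$, giving the coalgebra structure for which $i_{C\wedge_E D}^E$ is a coalgebra homomorphism. This corestriction step --- verifying that $\Delta_E$ sends $C\wedge_E D$ into the intersection $((C\wedge_E D)\otimes E)\cap(E\otimes(C\wedge_E D))$ and that this intersection equals $(C\wedge_E D)\otimes(C\wedge_E D)$, which relies on exactness of $\otimes$ --- is the only point I expect to require real care.

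The boundary cases are then immediate computations. For $C=0$ we have $p_0^E = \mathrm{Id}_E$, so $\Delta_{0,D} = (\mathrm{Id}_E\otimes p_D^E)\Delta = \rho_{E/D}^{l}\circ p_D^E$, where $\rho_{E/D}^{l}$ is the left $E$-comodule structure of $\frac{E}{D}$ and is a split monomorphism by the counit axiom; consequently $\ker(\Delta_{0,D}) = \ker(p_D^E) = (D, i_D^E)$. The case $D=0$ follows symmetrically using the right comodule structure, yielding $(C\wedge_E 0, i_{C\wedge_E 0}^E) = (C, i_C^E)$.
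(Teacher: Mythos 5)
Your proposal is correct and follows essentially the same route as the paper's inline justifications: the kernel--cokernel short exact sequence, the factorization of $\Delta_{C,D}$ through its image (so that $\overline{\Delta}_{C,D}$ is the structural monomorphism $\mathrm{Im}(\Delta_{C,D})\hookrightarrow \frac{E}{C}\otimes\frac{E}{D}$), the observation that $\Delta_{C,D}$ lives in ${}^{E}\mathcal{M}^{E}$ for the subcoalgebra assertion, and the identity $\Delta_{0,D}=\rho^{l}_{E/D}\circ p_{D}^{E}$ with $\rho^{l}_{E/D}$ split monic for the boundary cases. The only difference is that you flesh out the corestriction of $\Delta_E$ to $C\wedge_E D$, which the paper dismisses as straightforward, and you correctly locate where left exactness of the tensor functors is needed there.
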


\begin{claim}
Let $(C,i_{C}^{E})$ be a subobject of a coalgebra $(E,\Delta ,\varepsilon )$
in a coabelian monoidal category $\mathcal{M}$. We can define (see \cite{AMS}%
) the $n$-th wedge product $\left( C^{\wedge _{E}n},i_{C^{\wedge
_{E}n}}^{E}\right) $ of $C$ in $E$ where $i_{C^{\wedge _{E}n}}^{E}:C^{\wedge
_{E}n}\rightarrow E.$ By definition, we have
\begin{equation*}
C^{\wedge _{E}0}=0\qquad \text{and}\qquad C^{\wedge _{E}n}=C^{\wedge
_{E}n-1}\wedge _{E}C,\text{ for every }n\geq 1.
\end{equation*}%
One can check that $\left( \left( C\wedge
_{E}D\right) \wedge _{E}F,i_{\left( C\wedge _{E}D\right) \wedge
_{E}F}^{E}\right) $ and $\left( C\wedge _{E}\left( D\wedge _{E}F\right)
,i_{C\wedge _{E}\left( D\wedge _{E}F\right) }^{E}\right) $ are isomorphic, for every subobject $C,D,F$ of $E$, 
and thus can be identified. 
Therefore $C^{\wedge _{E}i}\wedge _{E}C^{\wedge _{E}j}=C^{\wedge _{E}i+j}$
and we can consider
\begin{equation*}
\overline{\Delta }_{C^{\wedge _{E}i},C^{\wedge _{E}j}}:\frac{E}{C^{\wedge
_{E}i+j}}\rightarrow \frac{E}{C^{\wedge _{E}i}}\otimes \frac{E}{C^{\wedge
_{E}j}}.
\end{equation*}%
Assume now that $(C,i_{C}^{E})$ is a subcoalgebra of the coalgebra $%
(E,\Delta ,\varepsilon ).$ Then there is a (unique) coalgebra homomorphism%
\begin{equation*}
i_{C^{\wedge _{E}n}}^{C^{\wedge _{E}n+1}}:C^{\wedge _{E}n}\rightarrow
C^{\wedge _{E}n+1},\text{ for every }n\in
\mathbb{N}
.
\end{equation*}%
such that $i_{C^{\wedge _{E}n+1}}^{E}\circ i_{C^{\wedge _{E}n}}^{C^{\wedge
_{E}n+1}}=i_{C^{\wedge _{E}n}}^{E}.$ We set%
\begin{equation*}
E_{C}:=\oplus _{n\in \mathbb{N}}\frac{E}{C^{\wedge _{E}n}}
\end{equation*}%
and
\begin{equation*}
\Delta _{i,j}^{E_{C}}:=\overline{\Delta }_{C^{\wedge _{E}i},C^{\wedge
_{E}j}}:\frac{E}{C^{\wedge _{E}i+j}}\rightarrow \frac{E}{C^{\wedge _{E}i}}%
\otimes \frac{E}{C^{\wedge _{E}j}}.
\end{equation*}%
Since $\Delta $ is coassociative and by definition of $\Delta _{i,j}^{E_{C}}$%
, it is straightforward to prove that $\Delta _{i,j}^{E_{C}}$ fulfills
\begin{gather}
\left( \Delta _{a,b}^{E_{C}}\otimes \frac{E}{C^{\wedge _{E}c}}\right) \Delta
_{a+b,c}^{E_{C}}=\left( \frac{E}{C^{\wedge _{E}a}}\otimes \Delta
_{b,c}^{E_{C}}\right) \Delta _{a,b+c}^{E_{C}}.  \label{form: locComultiWedge}
\\
\left( \frac{E}{C^{\wedge _{E}d}}\otimes \varepsilon \right) \Delta
_{d,0}^{E_{C}}=r_{\frac{E}{C^{\wedge _{E}d}}}^{-1},\qquad \left( \varepsilon
\otimes \frac{E}{C^{\wedge _{E}d}}\right) \Delta _{0,d}^{E_{C}}=l_{\frac{E}{%
C^{\wedge _{E}d}}}^{-1}  \label{form: locCounitWedge} \\
\left( \frac{E}{i_{C^{\wedge _{E}a}}^{C^{\wedge _{E}a+1}}}\otimes
C_{b}\right) \circ \Delta _{a,b}^{E_{C}}=\Delta _{a+1,b}^{E_{C}}\circ \frac{E%
}{i_{C^{\wedge _{E}a+b}}^{C^{\wedge _{E}a+b+1}}},\qquad \left( C_{a}\otimes
\frac{E}{i_{C^{\wedge _{E}b}}^{C^{\wedge _{E}b+1}}}\right) \circ \Delta
_{a,b}^{E_{C}}=\Delta _{a,b+1}^{E_{C}}\circ \frac{E}{i_{C^{\wedge
_{E}a+b}}^{C^{\wedge _{E}a+b+1}}}.  \label{form: locComp CoalgWedge}
\end{gather}
\end{claim}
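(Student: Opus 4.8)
The plan is to verify each of the three displayed identities by the same device: precompose with the canonical epimorphism $p_{C^{\wedge_E n}}^E:E\rightarrow E/C^{\wedge_E n}$ of the appropriate degree $n$, simplify using the structure of $\Delta$, and then cancel the epimorphism. The single tool driving all three computations is the defining relation of $\overline{\Delta}_{C,D}$ read off from the commutative diagram defining $\overline{\Delta}_{C,D}$ in \ref{claim: wedge}, namely $\overline{\Delta}_{C^{\wedge_E i},C^{\wedge_E j}}\circ p_{C^{\wedge_E i+j}}^E=(p_{C^{\wedge_E i}}^E\otimes p_{C^{\wedge_E j}}^E)\Delta$, that is $\Delta_{i,j}^{E_C}\circ p_{C^{\wedge_E i+j}}^E=(p_{C^{\wedge_E i}}^E\otimes p_{C^{\wedge_E j}}^E)\Delta$; denote this relation by $(\ast)$. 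I would record at the outset the identification $C^{\wedge_E i}\wedge_E C^{\wedge_E j}=C^{\wedge_E i+j}$ (so that all sources and targets of the $\Delta_{i,j}^{E_C}$ line up), together with the degenerate values $C^{\wedge_E 0}=0$ and $p_{C^{\wedge_E 0}}^E=\mathrm{Id}_E$, and I would read the symbols $C_a,C_b$ appearing in (\ref{form: locComp CoalgWedge}) as the identities of the objects $E/C^{\wedge_E a},E/C^{\wedge_E b}$, so that the identity matches the shape of (\ref{form: locComp Coalg}).

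For (\ref{form: locComultiWedge}) I would precompose both sides with $p_{C^{\wedge_E a+b+c}}^E$. Applying $(\ast)$ repeatedly and pulling the projections out of the tensor factors, the left composite becomes $(p_{C^{\wedge_E a}}^E\otimes p_{C^{\wedge_E b}}^E\otimes p_{C^{\wedge_E c}}^E)(\Delta\otimes E)\Delta$ while the right composite becomes $(p_{C^{\wedge_E a}}^E\otimes p_{C^{\wedge_E b}}^E\otimes p_{C^{\wedge_E c}}^E)(E\otimes\Delta)\Delta$; these agree by coassociativity of $\Delta$, and since $p_{C^{\wedge_E a+b+c}}^E$ is an epimorphism the two sides of (\ref{form: locComultiWedge}) coincide. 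For (\ref{form: locCounitWedge}) I would precompose $(\frac{E}{C^{\wedge_E d}}\otimes\varepsilon)\Delta_{d,0}^{E_C}$ with $p_{C^{\wedge_E d}}^E$; using $p_{C^{\wedge_E 0}}^E=\mathrm{Id}_E$ and $(\ast)$ it collapses to $(p_{C^{\wedge_E d}}^E\otimes\varepsilon)\Delta=(p_{C^{\wedge_E d}}^E\otimes\mathbf{1})(E\otimes\varepsilon)\Delta$, which by the counit axiom $(E\otimes\varepsilon)\Delta=r_E^{-1}$ and naturality of $r$ equals $r_{E/C^{\wedge_E d}}^{-1}\circ p_{C^{\wedge_E d}}^E$. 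Cancelling the epimorphism yields the first counit identity, and the left-hand one follows symmetrically from $(\varepsilon\otimes E)\Delta=l_E^{-1}$.

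For (\ref{form: locComp CoalgWedge}) the extra ingredient is the behaviour of the induced projection $\frac{E}{i_{C^{\wedge_E a}}^{C^{\wedge_E a+1}}}$, which by the very definition of the quotient morphism $y/x$ in the Notations satisfies $\frac{E}{i_{C^{\wedge_E a}}^{C^{\wedge_E a+1}}}\circ p_{C^{\wedge_E a}}^E=p_{C^{\wedge_E a+1}}^E$. Precomposing both sides of the first equation with $p_{C^{\wedge_E a+b}}^E$ and using this together with $(\ast)$, the left side becomes $(p_{C^{\wedge_E a+1}}^E\otimes p_{C^{\wedge_E b}}^E)\Delta=\Delta_{a+1,b}^{E_C}\circ p_{C^{\wedge_E a+b+1}}^E$, while the right side becomes $\Delta_{a+1,b}^{E_C}\circ\frac{E}{i_{C^{\wedge_E a+b}}^{C^{\wedge_E a+b+1}}}\circ p_{C^{\wedge_E a+b}}^E=\Delta_{a+1,b}^{E_C}\circ p_{C^{\wedge_E a+b+1}}^E$ as well; cancelling $p_{C^{\wedge_E a+b}}^E$ finishes it, and the second equation is entirely analogous with the roles of the two tensor factors exchanged.

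The computations are routine once $(\ast)$ is in hand, so I do not expect a genuine obstacle; the points demanding care are all bookkeeping. Specifically, one must keep the wedge-index identifications $C^{\wedge_E i+j}=C^{\wedge_E i}\wedge_E C^{\wedge_E j}$ straight so that the source and target objects of the various $\Delta_{i,j}^{E_C}$ genuinely match, handle the degenerate factor $E/C^{\wedge_E 0}=E$ correctly in the counit identities, and invoke \emph{naturality} of the unit constraints $r$ and $l$ (not merely the Triangle Axiom) at the final step of (\ref{form: locCounitWedge}). Throughout I would freely suppress the associativity constraints, as licensed earlier in the paper.
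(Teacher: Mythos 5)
Your proof is correct and is exactly the argument the paper has in mind: the paper leaves these identities as "straightforward" consequences of coassociativity and the defining relation $\Delta_{i,j}^{E_{C}}\circ p_{C^{\wedge _{E}i+j}}^{E}=(p_{C^{\wedge _{E}i}}^{E}\otimes p_{C^{\wedge _{E}j}}^{E})\Delta$, which is precisely your relation $(\ast)$, and your strategy of precomposing with the canonical epimorphisms and cancelling is the intended verification. Your bookkeeping remarks (the identification $C^{\wedge _{E}i}\wedge _{E}C^{\wedge _{E}j}=C^{\wedge _{E}i+j}$, the degenerate case $E/C^{\wedge _{E}0}=E$, naturality of the unit constraints, and reading $C_{a},C_{b}$ as $E/C^{\wedge _{E}a},E/C^{\wedge _{E}b}$) are all accurate.
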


\begin{theorem}
\label{teo: gr dual}Let $\mathcal{M}$ be a cocomplete coabelian monoidal
category such that the tensor product commutes with direct sums.

Let $(C,i_{C}^{E})$ be a subcoalgebra of a coalgebra $(E,\Delta ,\varepsilon
)$ in a coabelian monoidal category $\mathcal{M}$. For every $n\in
\mathbb{N}
$, we set
\begin{equation*}
gr_{C}^{n}E=\frac{C^{\wedge _{E}n+1}}{C^{\wedge _{E}n}}.
\end{equation*}%
Then $E_{C}=\oplus _{n\in \mathbb{N}}\frac{E}{C^{\wedge _{E}n}}$ is a graded
coalgebra, there is a unique coalgebra structure on $gr_{C}E:=\oplus _{n\in
\mathbb{N}}gr_{C}^{n}E$ such that
\begin{equation*}
\oplus _{n\in \mathbb{N}}\frac{i_{C^{\wedge _{E}n+1}}^{E}}{C^{\wedge _{E}n}}%
:gr_{C}E\rightarrow E_{C}
\end{equation*}%
is a coalgebra homomorphism and

\begin{enumerate}
\item $gr_{C}E$ is a graded coalgebra such that $\oplus _{n\in \mathbb{N}}%
\frac{i_{C^{\wedge _{E}n+1}}^{E}}{C^{\wedge _{E}n}}$ is a graded
homomorphism;

\item
\begin{equation}
\left( \frac{i_{C^{\wedge _{E}a+1}}^{E}}{C^{\wedge _{E}a}}\otimes \frac{%
i_{C^{\wedge _{E}b+1}}^{E}}{C^{\wedge _{E}b}}\right) \circ \Delta
_{a,b}^{gr_{C}E}=\Delta _{a,b}^{E_{C}}\circ \frac{i_{C^{\wedge
_{E}a+b+1}}^{E}}{C^{\wedge _{E}a+b}};  \label{form: Delta grCo}
\end{equation}

\item $\varepsilon _{gr_{C}E}=\varepsilon _{E}\circ i_{C}^{E}\circ
p_{0}^{gr_{C}E}.$
\end{enumerate}

Moreover $gr_{C}E$ is a strongly $%
\mathbb{N}
$-graded coalgebra.
\end{theorem}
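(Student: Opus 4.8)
The plan is to exhibit the whole statement as a single application of Lemma \ref{lem: GrDirect coalg} to the direct system $E_{C}$. In the notation of that lemma I would set $C_{n}:=\frac{E}{C^{\wedge _{E}n}}$, take $\beta _{C_{n}}^{C_{n+1}}:=\frac{E}{i_{C^{\wedge _{E}n}}^{C^{\wedge _{E}n+1}}}\colon \frac{E}{C^{\wedge _{E}n}}\rightarrow \frac{E}{C^{\wedge _{E}n+1}}$ for the canonical epimorphism attached to the inclusion $C^{\wedge _{E}n}\subseteq C^{\wedge _{E}n+1}$, and put $\Delta _{a,b}^{C}:=\Delta _{a,b}^{E_{C}}$ together with $\varepsilon _{0}^{C}:=\varepsilon$ (note $C_{0}=\frac{E}{C^{\wedge _{E}0}}=E$). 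These data form a direct system of epimorphisms, and the three families of local identities demanded by Lemma \ref{lem: GrDirect coalg}, namely (\ref{form: locDelta}), (\ref{form: locEps}) and (\ref{form: locComp Coalg}), coincide respectively with (\ref{form: locComultiWedge}), (\ref{form: locCounitWedge}) and (\ref{form: locComp CoalgWedge}), which were already established in the claim preceding the statement. Thus all hypotheses of the lemma are in force, the ambient category being cocomplete exactly so that the relevant coproducts exist.

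The one identification to make explicit is that of the kernels. From the chain of monomorphisms $C^{\wedge _{E}n}\subseteq C^{\wedge _{E}n+1}\subseteq E$ one obtains, by the standard diagram in the abelian category $\mathcal{M}$, the short exact sequence
\begin{equation*}
0\rightarrow \frac{C^{\wedge _{E}n+1}}{C^{\wedge _{E}n}}\overset{\frac{i_{C^{\wedge _{E}n+1}}^{E}}{C^{\wedge _{E}n}}}{\longrightarrow }\frac{E}{C^{\wedge _{E}n}}\overset{\beta _{C_{n}}^{C_{n+1}}}{\longrightarrow }\frac{E}{C^{\wedge _{E}n+1}}\rightarrow 0,
\end{equation*}
whence $\left( E_{n},i_{E_{n}}^{C_{n}}\right) :=\ker \left( \beta _{C_{n}}^{C_{n+1}}\right) =\left( gr_{C}^{n}E,\frac{i_{C^{\wedge _{E}n+1}}^{E}}{C^{\wedge _{E}n}}\right)$. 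Feeding this into Lemma \ref{lem: GrDirect coalg} yields at once that $E_{C}=\oplus _{n\in \mathbb{N}}\frac{E}{C^{\wedge _{E}n}}$ is a graded coalgebra, that $\oplus _{n\in \mathbb{N}}\frac{i_{C^{\wedge _{E}n+1}}^{E}}{C^{\wedge _{E}n}}$ admits a unique coalgebra structure on $gr_{C}E$ making it a coalgebra homomorphism, and that assertions 1), 2) (which is precisely (\ref{form: Delta grCo})) and 3) hold.

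For the final claim I would first observe that $E_{C}$ is itself strongly $\mathbb{N}$-graded: by definition $\Delta _{i,j}^{E_{C}}=\overline{\Delta }_{C^{\wedge _{E}i},C^{\wedge _{E}j}}$, and this is a monomorphism by Claim \ref{claim: wedge}, being the factorization of $\Delta _{C^{\wedge _{E}i},C^{\wedge _{E}j}}$ through its image. To pass from $E_{C}$ to $gr_{C}E$ I would use only assertion 2): in (\ref{form: Delta grCo}) the right-hand side $\Delta _{a,b}^{E_{C}}\circ \frac{i_{C^{\wedge _{E}a+b+1}}^{E}}{C^{\wedge _{E}a+b}}$ is a composite of the monomorphism $\Delta _{a,b}^{E_{C}}$ with the kernel inclusion $\frac{i_{C^{\wedge _{E}a+b+1}}^{E}}{C^{\wedge _{E}a+b}}$, hence is a monomorphism; consequently the left-hand side, and therefore $\Delta _{a,b}^{gr_{C}E}$ itself, is a monomorphism for all $a,b\in \mathbb{N}$, which is exactly the strong $\mathbb{N}$-gradedness of $gr_{C}E$.

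I expect the only genuinely conceptual point — rather than a real obstacle — to be the realization that, in contrast with the inverse-system Theorem \ref{teo: strongly Alg co}, here one neither needs nor can invoke a full equivalence of strong gradedness: the one-directional transfer from $E_{C}$ down to $gr_{C}E$ suffices and rests solely on the factorization in assertion 2), which is available in the direct-system Lemma \ref{lem: GrDirect coalg}. All the coassociativity and counit bookkeeping having been discharged in the preceding claim, no further local computation is required.
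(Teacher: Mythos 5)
Your proposal is correct and follows essentially the same route as the paper: apply Lemma \ref{lem: GrDirect coalg} to the family $(\tfrac{E}{C^{\wedge_E n}})_{n\in\mathbb{N}}$ using the identities (\ref{form: locComultiWedge}), (\ref{form: locCounitWedge}), (\ref{form: locComp CoalgWedge}), identify the kernels with $gr_C^n E$, and deduce strong gradedness from (\ref{form: Delta grCo}) since $\Delta_{a,b}^{E_C}=\overline{\Delta}_{C^{\wedge_E a},C^{\wedge_E b}}$ and the kernel inclusion are both monomorphisms. Your explicit identification of $\ker(\beta_{C_n}^{C_{n+1}})$ via the short exact sequence, and the remark that only the one-directional transfer (not the equivalence of Theorem \ref{teo: strongly Alg co}) is needed, are details the paper leaves implicit but are accurate.
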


\begin{proof}
By (\ref{form: locComultiWedge}), (\ref{form: locCounitWedge}) and (\ref%
{form: locComp CoalgWedge}), we can apply Lemma \ref{lem: GrDirect coalg} to
the family $(\frac{E}{C^{\wedge _{E}n}})_{n\in
\mathbb{N}
}.$ It remains to prove the last assertion.

From 2), since both $\frac{i_{C^{\wedge _{E}a+b+1}}^{E}}{C^{\wedge _{E}a+b}}$
and $\Delta _{a,b}^{E_{C}}$ are monomorphisms, we get that $\Delta
_{a,b}^{gr_{C}E}$ is a monomorphism too, for every $a,b\in \mathbb{N}.$ Thus
$gr_{C}E$ is a strongly $%
\mathbb{N}
$-graded coalgebra.
\end{proof}

\begin{definition}
Let $(C,i_{C}^{E})$ be a subcoalgebra of a coalgebra $(E,\Delta ,\varepsilon
)$ in a cocomplete coabelian monoidal category $\mathcal{M}$ such that the
tensor product commutes with direct sums.

The strongly $%
\mathbb{N}
$-graded coalgebra $gr_{C}E$ defined in Theorem \ref{teo: gr dual} will be
called the \emph{associated graded coalgebra }(of $E$ with respect to $C$).
\end{definition}

\begin{theorem}
\label{teo: graded dual}Let $\mathcal{M}$ be a cocomplete and complete
coabelian monoidal category satisfying $AB5$ such that the tensor product
commutes with direct sums. Let $(C,i_{C}^{E})$ be a subcoalgebra of a
coalgebra $(E,\Delta ,\varepsilon )$ in $\mathcal{M}$ and let $gr_{C}E$ be
the associated graded coalgebra.

Let
\begin{equation*}
T^{c}:=T_{C}^{c}\left( \frac{C\wedge _{E}C}{C}\right)
\end{equation*}%
be the cotensor coalgebra. Then there is a unique coalgebra homomorphism
\begin{equation*}
\psi :gr_{C}E\rightarrow T_{C}^{c}\left( \frac{C\wedge _{E}C}{C}\right) ,
\end{equation*}%
such that $p_{0}^{T^{c}}\circ \psi =p_{0}^{gr_{C}E}$ and $p_{1}^{T^{c}}\circ
\psi =p_{1}^{gr_{C}E}$. \newline
Moreover $\psi $ is a graded coalgebra homomorphism with%
\begin{equation*}
\psi _{m}=\left( p_{1}^{gr_{C}E}\right) ^{\square m}\circ \overline{\Delta }%
_{gr_{C}E}^{m-1}\circ i_{m}^{gr_{C}E}\text{ for every }m\in
\mathbb{N}%
\end{equation*}%
and the following equivalent assertions hold.

$\left( a\right) $ $gr_{C}E$ is a strongly $%
\mathbb{N}
$-graded coalgebra.

$\left( a^{\prime }\right) $ $\Delta _{a,1}^{gr_{C}E}:gr_{C}^{a+1}\left(
E\right) \rightarrow gr_{C}^{a}\left( E\right) \otimes gr_{C}^{1}\left(
E\right) $ is a monomorphism for every $a\in
\mathbb{N}
$.

$\left( b\right) $ $\psi _{n}$ is a monomorphism for every $n\in
\mathbb{N}
$.

$\left( c\right) $ $\psi $ is a monomorphism.

$\left( d\right) $ $\oplus _{0\leq i\leq n-1}gr_{C}^{i}E=C^{\wedge
_{gr_{C}E}^{n}},$ for every $n\geq 1$.

$\left( e\right) $ $\oplus _{0\leq i\leq 1}gr_{C}^{i}E=C\oplus \frac{C\wedge
_{E}C}{C}=C^{\wedge _{gr_{C}E}^{2}}$.
\end{theorem}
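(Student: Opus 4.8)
The plan is to produce $\psi$ from the universal property of the cotensor coalgebra and then read the equivalences off the graded structure. By Theorem \ref{teo: gr dual}, $gr_{C}E$ is a graded coalgebra whose degree-zero component is $gr_{C}^{0}E=\frac{C^{\wedge _{E}1}}{C^{\wedge _{E}0}}=C$ and whose degree-one component is $gr_{C}^{1}E=\frac{C\wedge _{E}C}{C}=:N$, which is precisely the bicomodule out of which $T^{c}=T_{C}^{c}(N)$ is built. By Proposition \ref{lem: graded Deltaij}(2) the projection $p_{0}^{gr_{C}E}:gr_{C}E\rightarrow C$ is a coalgebra homomorphism and $p_{1}^{gr_{C}E}:gr_{C}E\rightarrow N$ is a morphism of $C$-bicomodules. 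Feeding this pair into the universal property of the cotensor coalgebra of \cite{AMS:Cotensor} yields a unique coalgebra homomorphism $\psi $ with $p_{0}^{T^{c}}\circ \psi =p_{0}^{gr_{C}E}$ and $p_{1}^{T^{c}}\circ \psi =p_{1}^{gr_{C}E}$, together with the identity $p_{m}^{T^{c}}\circ \psi =(p_{1}^{gr_{C}E})^{\square m}\circ \overline{\Delta }_{gr_{C}E}^{m-1}$, which gives the stated formula for $\psi _{m}$ after composing with $i_{m}^{gr_{C}E}$.

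Next I would verify that $\psi $ is graded. Since $gr_{C}E$ is a graded coalgebra, its iterated comultiplication sends $gr_{C}^{n}E$ into $\oplus _{a_{1}+\cdots +a_{m}=n}gr_{C}^{a_{1}}E\otimes \cdots \otimes gr_{C}^{a_{m}}E$; post-composing with $(p_{1}^{gr_{C}E})^{\square m}$ annihilates every summand except the one with all $a_{i}=1$, forcing $n=m$. Hence $p_{m}^{T^{c}}\circ \psi \circ i_{n}^{gr_{C}E}=0$ for $n\neq m$, so $\psi =\oplus _{m}\psi _{m}$ is a graded homomorphism. In particular $\psi _{1}=\mathrm{Id}_{N}$, and by coassociativity (splitting off the last tensorand) one gets the recursion $\psi _{m+1}=(\psi _{m}\square \mathrm{Id}_{N})\circ \Delta _{m,1}^{gr_{C}E}$, where $\Delta _{m,1}^{gr_{C}E}$ is $C$-bicolinear by Proposition \ref{lem: graded Deltaij}(2) and hence factors through $gr_{C}^{m}E\square _{C}gr_{C}^{1}E$.

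For the equivalences I would argue as follows. The equivalence $(a)\Leftrightarrow (a^{\prime })$ is exactly \cite[Theorem 2.22]{AM}. For $(a^{\prime })\Rightarrow (b)$ I would induct on $m$ using the recursion: $\Delta _{m,1}^{gr_{C}E}$ is a monomorphism by $(a^{\prime })$, $\psi _{m}$ is a monomorphism by induction, and since $\mathcal{M}$ is coabelian the tensor functors, and therefore the cotensor functor $(-)\square _{C}N$, are left exact and so preserve monomorphisms; hence $\psi _{m+1}=(\psi _{m}\square \mathrm{Id}_{N})\circ \Delta _{m,1}^{gr_{C}E}$ is a monomorphism. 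Conversely $(b)\Rightarrow (a^{\prime })$ because a morphism whose post-composition $\psi _{m+1}=(\psi _{m}\square \mathrm{Id})\circ \Delta _{m,1}^{gr_{C}E}$ is a monomorphism is itself a monomorphism. The equivalence $(b)\Leftrightarrow (c)$ follows from $\psi =\oplus _{m}\psi _{m}$ being graded: under $AB5$ a coproduct of monomorphisms is a monomorphism, and conversely each graded component of a monomorphism is a monomorphism. Finally, using that in the cotensor coalgebra $C^{\wedge _{T^{c}}n}=\oplus _{0\leq i\leq n-1}N^{\square _{C}i}$, the graded monomorphism $\psi $ (with $\psi _{0}=\mathrm{Id}_{C}$) identifies $C^{\wedge _{gr_{C}E}n}=\psi ^{-1}(C^{\wedge _{T^{c}}n})$ with the degrees-$<n$ part $\oplus _{0\leq i\leq n-1}gr_{C}^{i}E$, giving $(c)\Rightarrow (d)$; then $(d)\Rightarrow (e)$ is the instance $n=2$, and $(e)\Rightarrow (a)$ is recovered by feeding the equality $C^{\wedge _{gr_{C}E}2}=C\oplus N$ back through \cite[Theorem 2.22]{AM}. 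Since $(a)$ holds unconditionally by Theorem \ref{teo: gr dual}, all of $(a)$--$(e)$ hold.

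The step I expect to be the main obstacle is the interface between the monomorphism conditions $(b),(c)$ and the wedge-filtration conditions $(d),(e)$: one must relate the internal wedge powers $C^{\wedge _{gr_{C}E}n}$ of the degree-zero subcoalgebra to the degree filtration, check that the degree filtration always sits inside the wedge filtration with equality precisely when $\psi $ is injective, and in particular bootstrap the single $n=2$ identity $(e)$ up to full strong gradedness. This is where the structural results of \cite{AM} on wedge powers in graded coalgebras carry the real weight; by contrast, the construction of $\psi $, its gradedness, and the monomorphism bookkeeping via the recursion and left exactness are comparatively routine once the universal property of the cotensor coalgebra is invoked.
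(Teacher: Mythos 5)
Your argument is correct in substance, but it takes a very different route from the paper: the paper's entire proof is two sentences, namely that assertion $\left( a\right) $ holds unconditionally by Theorem \ref{teo: gr dual} (which is the only input specific to the situation at hand), and that the existence and uniqueness of $\psi $, the formula for $\psi _{m}$, and the equivalence of $\left( a\right) $--$\left( e\right) $ are all the content of the cited general result \cite[Theorem 2.22]{AM} about an arbitrary graded coalgebra. What you have done is essentially reprove that general result from the universal property of the cotensor coalgebra: the construction of $\psi $, the degree argument showing $\psi $ is graded, the recursion $\psi _{m+1}=(\psi _{m}\square \mathrm{Id})\circ \Delta _{m,1}^{gr_{C}E}$, and the chain $\left( a^{\prime }\right) \Rightarrow \left( b\right) \Leftrightarrow \left( c\right) \Rightarrow \left( d\right) \Rightarrow \left( e\right) \Rightarrow \left( a\right) $ are all sound and match the standard proof of that theorem. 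Your approach buys self-containedness at the cost of length; the paper's buys brevity by delegation. Two caveats. First, you invoke the universal property of $T_{C}^{c}$ without checking its hypothesis: that property does not apply to an arbitrary pair $(p_{0}^{gr_{C}E},p_{1}^{gr_{C}E})$ but requires a local conilpotency condition, namely that the wedge filtration $C^{\wedge _{gr_{C}E}n}$ be exhaustive; this is where $AB5$ and the inclusion $\oplus _{0\leq i\leq n-1}gr_{C}^{i}E\subseteq C^{\wedge _{gr_{C}E}n}$ (valid for any graded coalgebra) must be used, and it is also the nontrivial half of the interface between $\left( b\right) ,\left( c\right) $ and $\left( d\right) ,\left( e\right) $ that you correctly flag as the delicate point. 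Second, your treatment of $\left( a\right) \Leftrightarrow \left( a^{\prime }\right) $ and of $\left( e\right) \Rightarrow \left( a\right) $ still falls back on \cite[Theorem 2.22]{AM}, so your proof is not actually independent of the citation the paper relies on; to make it so you would need to supply the bootstrap from the single identity in $\left( e\right) $ to monomorphy of all $\Delta _{a,1}^{gr_{C}E}$, by an induction of the kind carried out in Theorem \ref{teo: strongly Alg co}. Neither caveat is a logical gap given that the external theorem is available, but both should be acknowledged.
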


\begin{proof}
By Theorem \ref{teo: gr dual}, $\left( a\right) $ holds. We conclude by
applying \cite[Theorem 2.22]{AM}.
\end{proof}

\section{The Associated Graded Algebra\label{sec: ass Grad alg}}

\begin{claim}
\label{def: grAlg} Let $\mathcal{M}$ be a cocomplete abelian monoidal
category such that the tensor product commutes with direct sums.

Recall that a \emph{graded algebra} in $\mathcal{M}$ is an algebra $\left(
A,m,u\right) $ where
\begin{equation*}
A=\oplus _{n\in \mathbb{N}}A_{n}
\end{equation*}%
is a graded object of $\mathcal{M}$ such that $m:A\otimes A\rightarrow A$ is
a graded homomorphism i.e. there exists a family $\left( m_{n}\right)
_{_{n\in \mathbb{N}}}$ of morphisms
\begin{equation*}
m_{n}^{A}=m_{n}:\oplus _{a+b=n}\left( A_{a}\otimes A_{b}\right) =\left(
A\otimes A\right) _{n}\rightarrow A_{n}\text{ such that }m=\oplus _{n\in
\mathbb{N}}m_{n}.
\end{equation*}%
We set
\begin{equation*}
m_{a,b}^{A}:=\left( A_{a}\otimes A_{b}\overset{\gamma _{a,b}^{A,A}}{%
\rightarrow }\left( A\otimes A\right) _{a+b}\overset{m_{a+b}}{\rightarrow }%
A_{a+b}\right) .
\end{equation*}%
A homomorphism $f:\left( A,m_{A},u_{A}\right) \rightarrow \left(
B,m_{B},u_{B}\right) $ of algebras is a graded algebra homomorphism if it is
a graded homomorphism too.
\end{claim}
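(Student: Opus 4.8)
The statement to be established is purely definitional: it recalls the notion of \emph{graded algebra} in a cocomplete abelian monoidal category and fixes the notation $m_{a,b}^{A}$, exactly dualizing the graded coalgebra of Claim \ref{def: grCoalg}. There is therefore no substantive conclusion to derive; my plan is to record why each ingredient is legitimate and how the two packagings of the grading data are interchangeable, so that the definition is internally consistent and matches its coalgebra counterpart under passage to $\mathcal{M}^{o}$.

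First I would invoke \ref{claim 4.2}: since $\mathcal{M}$ is cocomplete abelian and $\otimes$ commutes with direct sums, the formula $\left( A\otimes A\right) _{n}=\oplus _{a+b=n}\left( A_{a}\otimes A_{b}\right) $ genuinely defines a graduation on $A\otimes A$, with canonical injections $\gamma _{a,b}^{A,A}$ and projections $\omega _{a,b}^{A,A}$. Hence it is meaningful to require that $m\colon A\otimes A\rightarrow A$ be a graded homomorphism, and this is precisely the demand that a family $\left( m_{n}\right) _{n}$ with $m=\oplus _{n}m_{n}$ exist. When it does, the relation $m\circ i_{\left( A\otimes A\right) _{n}}^{A\otimes A}=i_{n}^{A}\circ m_{n}$ forces $m_{n}=p_{n}^{A}\circ m\circ i_{\left( A\otimes A\right) _{n}}^{A\otimes A}$, so the family is uniquely determined by $m$.

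Next I would verify that $m_{a,b}^{A}:=m_{a+b}\circ \gamma _{a,b}^{A,A}$ is well defined from the data already at hand, and that the two descriptions of a graded multiplication are mutually inverse. In one direction one sets $m_{a,b}^{A}=m_{a+b}\circ \gamma _{a,b}^{A,A}$; in the other, using that $\oplus _{a+b=n}\left( A_{a}\otimes A_{b}\right) $ is a finite biproduct, so that $\sum _{a+b=n}\gamma _{a,b}^{A,A}\circ \omega _{a,b}^{A,A}=\mathrm{Id}_{\left( A\otimes A\right) _{n}}$, one recovers $m_{n}=\sum _{a+b=n}m_{a,b}^{A}\circ \omega _{a,b}^{A,A}$. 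This is the algebra shadow of the identities (\ref{form nablasum1}) and (\ref{form deltasum1}) and of the correspondence between $\Delta _{n}$ and $\Delta _{a,b}^{C}$ recorded in Claim \ref{def: grCoalg}. The final clause, defining a graded algebra homomorphism as one that is simultaneously an algebra map and a graded homomorphism, is again a definition and needs no argument.

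Accordingly I expect no genuine obstacle here, since nothing is being proved beyond consistency of notation. The only place where work could hide is the algebra analogue of Proposition \ref{lem: graded Deltaij}, asserting that a family $\left( m_{a,b}^{A}\right) $ together with a unit satisfying the evident associativity and unit constraints assembles into a bona fide graded algebra structure; but that is not part of the present statement, and in any case it would follow formally by reading Proposition \ref{lem: graded Deltaij} inside the dual category $\mathcal{M}^{o}$, in line with the authors' convention that algebra-side proofs obtained by easy adaptation of the coalgebra ones are omitted.
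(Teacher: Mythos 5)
Your proposal is correct and matches the paper exactly: the statement is a definition/notation recall for which the paper gives no proof, and your consistency checks (the graduation on $A\otimes A$ from \ref{claim 4.2}, uniqueness of the family $\left( m_{n}\right) _{n}$ via $m_{n}=p_{n}^{A}\circ m\circ i_{\left( A\otimes A\right) _{n}}^{A\otimes A}$, and recovery of $m_{n}=\sum_{a+b=n}m_{a,b}^{A}\circ \omega _{a,b}^{A,A}$ from the finite biproduct identity) are all accurate. Your closing remark is likewise consistent with the paper, whose algebra analogue of Proposition \ref{lem: graded Deltaij} is Proposition \ref{lem: graded mij}, cited from the literature rather than reproved, in line with the authors' stated convention of omitting algebra-side arguments dual to coalgebra ones.
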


\begin{definition}
\label{def: strongly grAlg}Let $(A=\oplus _{n\in
\mathbb{N}
}A_{n},m,u)$ be a graded algebra in $\mathcal{M}$. In analogy with the group
graded case, we say that $A$ is a \emph{strongly }$%
\mathbb{N}
$\emph{-graded algebra} whenever

$m_{i,j}^{A}:A_{i}\otimes A_{j}\rightarrow A_{i+j}$ is an epimorphism for
every $i,j\in \mathbb{N},$

where $m_{i,j}^{A}$ is the morphism of Definition \ref{def: grAlg}.
\end{definition}

\begin{proposition}
\cite[Proposition 3.4]{AM} \label{lem: graded mij} Let $\mathcal{M}$ be a
cocomplete abelian monoidal category such that the tensor product commutes
with direct sums.

1) Let $A=\oplus _{n\in \mathbb{N}}A_{n}$ be a graded object of $\mathcal{M}$
such that there exists a family $\left( m_{a,b}\right) _{_{a,b\in
\mathbb{N}
}}$
\begin{equation*}
m_{a,b}^{A}:A_{a}\otimes A_{b}\rightarrow A_{a+b},
\end{equation*}%
of morphisms and a morphism $u_{0}^{A}:\mathbf{1}\rightarrow A_{0}$ which
satisfy
\begin{gather}
m_{a+b,c}^{A}\circ \left( m_{a,b}^{A}\otimes A_{c}\right)
=m_{a,b+c}^{A}\circ \left( A_{a}\otimes m_{b,c}^{A}\right) \text{,}
\label{form: locMulti} \\
m_{d,0}^{A}\circ \left( A_{d}\otimes u_{0}^{A}\right) =r_{A_{d}},\qquad
m_{0,d}^{A}\circ \left( u_{0}^{A}\otimes A_{d}\right) =l_{A_{d}}\text{,}
\label{form: locUnit}
\end{gather}%
for every $a,b,c\in
\mathbb{N}
$. Then there exists a unique morphism $m_{A}:A\otimes A\rightarrow A$ such
that
\begin{equation}
m_{A}\circ (i_{a}^{A}\otimes i_{b}^{A})=i_{a+b}^{A}\circ m_{a,b}^{A},\text{
for every }a,b\in
\mathbb{N}
\label{form: coro grAlg1}
\end{equation}%
holds.\newline
Moreover $\left( A=\oplus _{n\in \mathbb{N}}A_{n},m_{A},u_{A}=i_{0}^{A}\circ
u_{0}^{A}\right) $ is a graded algebra.

$2)$ If $A$ is a graded algebra then
\begin{equation}
p_{a+b}^{A}\circ m_{A}=\sum\limits_{a+b=n}m_{a,b}^{A}\circ \left(
p_{a}^{A}\otimes p_{b}^{A}\right)  \label{form: grAlg}
\end{equation}%
holds, $u_{A}=i_{0}^{A}p_{0}^{A}u_{A}\ $so that $u_{A}$ is a graded
homomorphism, and we have that (\ref{form: locMulti}) and (\ref{form:
locUnit}) hold for every $a,b,c\in
\mathbb{N}
$, where $u_{0}^{A}=p_{0}^{A}u_{A}$.

Moreover $\left( A_{0},m_{0}=m_{0,0}^{A},u_{0}^{A}=p_{0}^{A}u_{A}\right) $
is an algebra in $\mathcal{M}$, $p_{0}^{A}$ is an algebra homomorphism and,
for every $n\in
\mathbb{N}
$, $\left( A_{n},m_{0,n}^{A},m_{n,0}^{A}\right) $ is an $A_{0}$-bimodule
such that $i_{n}^{A}:A_{n}\rightarrow A$ is a morphism of $A_{0}$-bimodules (%
$A$ is an $A_{0}$-bimodule through $i_{0}^{A}$).
\end{proposition}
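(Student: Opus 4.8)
The construction runs exactly parallel to Proposition \ref{lem: graded Deltaij}, only with the multiplication built as a map \emph{out of} a coproduct rather than the comultiplication. Since the tensor product commutes with direct sums, $A\otimes A=\oplus_{a,b\in\mathbb{N}}(A_a\otimes A_b)$, the canonical injection of the $(a,b)$-summand being $i_a^A\otimes i_b^A$. Hence, by the universal property of this coproduct, there is a unique morphism $m_A:A\otimes A\rightarrow A$ whose components are prescribed by
\[
m_A\circ(i_a^A\otimes i_b^A)=i_{a+b}^A\circ m_{a,b}^A,
\]
which is exactly (\ref{form: coro grAlg1}); uniqueness is immediate from the universal property. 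Because $i_a^A\otimes i_b^A$ factors through the degree-$(a+b)$ part $(A\otimes A)_{a+b}$ via $\gamma_{a,b}^{A,A}$, each component lands in the summand $A_{a+b}$, so $m_A$ carries $(A\otimes A)_n=\oplus_{a+b=n}(A_a\otimes A_b)$ into $A_n$; thus $m_A$ is a graded homomorphism with $m_n=\nabla[(m_{a,b}^A)_{a+b=n}]$.

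To check associativity I would again use that $\otimes$ commutes with direct sums, so that the family $\{\,i_a^A\otimes i_b^A\otimes i_c^A\,\}_{a,b,c}$ is jointly epimorphic on $A\otimes A\otimes A$; it therefore suffices to verify that $m_A\circ(m_A\otimes A)$ and $m_A\circ(A\otimes m_A)$ agree after precomposition with each such triple injection. Expanding both sides through the defining relation above collapses the identity precisely to the local associativity (\ref{form: locMulti}). Likewise, writing $u_A=i_0^A\circ u_0^A$ and precomposing the two unit triangles with the injections $i_d^A$ reduces $m_A\circ(A\otimes u_A)=r_A$ and $m_A\circ(u_A\otimes A)=l_A$ to the two identities in (\ref{form: locUnit}). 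This proves that $(A,m_A,u_A)$ is a graded algebra and settles part 1.

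For part 2, if $A$ is a priori a graded algebra, then applying $p_n^A$ to $m_A$ and using that $m_A$ respects the grading yields (\ref{form: grAlg}); the factorisation $u_A=i_0^A\circ u_0^A$ with $u_0^A=p_0^A u_A$ gives $u_A=i_0^A p_0^A u_A$ and shows $u_A$ is graded, while restricting the global associativity and unit constraints to homogeneous components recovers (\ref{form: locMulti}) and (\ref{form: locUnit}). Finally, specialising (\ref{form: locMulti}) to $a=b=c=0$ and (\ref{form: locUnit}) to $d=0$ exhibits $(A_0,m_{0,0}^A,u_0^A)$ as an algebra; the degree-zero instance of (\ref{form: grAlg}) reads $p_0^A\circ m_A=m_{0,0}^A\circ(p_0^A\otimes p_0^A)$, so $p_0^A$ is an algebra homomorphism; and specialising (\ref{form: locMulti}) with a single index equal to $0$, together with the relevant case of (\ref{form: locUnit}), makes each $(A_n,m_{0,n}^A,m_{n,0}^A)$ an $A_0$-bimodule for which $i_n^A$ is a morphism of $A_0$-bimodules.

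The only genuinely delicate point I anticipate is the associativity verification: keeping track of which injections appear when $m_A\circ(m_A\otimes A)$ is expanded through its defining relation, and confirming that the reduction to (\ref{form: locMulti}) via joint epimorphicity is clean. Everything else is a direct transcription of the already-established coalgebra Proposition \ref{lem: graded Deltaij} into the algebra setting, which is exactly why one is entitled to treat it as an easy adaptation.
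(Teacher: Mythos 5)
Your argument is correct, and it is the standard direct verification that the paper itself omits: this proposition is only quoted from \cite[Proposition 3.4]{AM}, with no proof reproduced here. Your route --- defining $m_A$ via the universal property of the coproduct $A\otimes A=\oplus_{a,b}(A_a\otimes A_b)$ and reducing associativity and unitality to (\ref{form: locMulti}) and (\ref{form: locUnit}) by precomposing with the jointly epimorphic families $i_a^A\otimes i_b^A\otimes i_c^A$ and $i_d^A\otimes \mathbf{1}$ --- is precisely the formal dual of the construction underlying Proposition \ref{lem: graded Deltaij}, which is what the authors intend when they describe the algebra-side proofs as easy adaptations of the coalgebra ones.
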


\begin{lemma}
\label{lem: GrDirect}Let $\mathcal{M}$ be a cocomplete abelian monoidal
category such that the tensor product commutes with direct sums. Let $%
((A_{a})_{a\in \mathbb{N}},(a_{A_{a}}^{A_{b}})_{a,b\in \mathbb{N}})$ be a
direct system in $\mathcal{M}$ where, for $a\leq b$, $%
i_{A_{a}}^{A_{b}}:A_{a}\rightarrow A_{b}$ is a monomorphism.\ Assume that
there exists a family $\left( m_{a,b}^{A}\right) _{_{a,b\in
\mathbb{N}
}}$
\begin{equation*}
m_{a,b}^{A}:A_{a}\otimes A_{b}\rightarrow A_{a+b},
\end{equation*}%
of morphisms and a morphism $u_{0}^{A}:\mathbf{1}\rightarrow A_{0}$ which
satisfy (\ref{form: locMulti}), (\ref{form: locUnit}),
\begin{equation}
m_{a+1,b}^{A}\circ \left( i_{A_{a}}^{A_{a+1}}\otimes A_{b}\right)
=i_{A_{a+b}}^{A_{a+b+1}}\circ m_{a,b}^{A}\qquad \text{and}\qquad
m_{a,b+1}^{A}\circ \left( A_{a}\otimes i_{A_{b}}^{A_{b+1}}\right)
=i_{A_{a+b}}^{A_{a+b+1}}\circ m_{a,b}^{A}  \label{form: locComp alg}
\end{equation}%
for every $a,b,c\in
\mathbb{N}
.$ Set $A_{-1}:=0.$

Then $A=\oplus _{n\in \mathbb{N}}A_{n}$ is a graded algebra and there are
unique algebra structure on $\oplus _{n\in \mathbb{N}}\frac{A_{n}}{A_{n-1}}$
such that
\begin{equation*}
\oplus _{n\in \mathbb{N}}p_{A_{n-1}}^{A_{n}}:\oplus _{n\in \mathbb{N}%
}A_{n}\rightarrow \oplus _{n\in \mathbb{N}}\frac{A_{n}}{A_{n-1}}
\end{equation*}%
is an algebra homomorphism. Moreover

\begin{enumerate}
\item[1)] $E=\oplus _{n\in \mathbb{N}}\frac{A_{n}}{A_{n-1}}$ is a graded
algebra such that $\oplus _{n\in \mathbb{N}}p_{A_{n-1}}^{A_{n}}$ is a graded
homomorphism;

\item[2)] $m_{a,b}^{E}\circ \left( p_{A_{a-1}}^{A_{a}}\otimes
p_{A_{b-1}}^{A_{b}}\right) =p_{A_{a+b-1}}^{A_{a+b}}\circ m_{a,b}^{A};$

\item[3)] $u_{E}=i_{0}^{E}\circ p_{A_{-1}}^{A_{0}}\circ u_{0}^{A}.$
\end{enumerate}
\end{lemma}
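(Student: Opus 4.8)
The plan is to prove this as the precise algebra-theoretic dual of Lemma \ref{lem: GrDirect coalg}, systematically replacing kernels by cokernels and the left exactness of the tensor functors (used in the coalgebra setting, where $\mathcal{M}$ is coabelian) by their right exactness, which holds here by hypothesis since $\mathcal{M}$ is abelian monoidal. First, relations (\ref{form: locMulti}) and (\ref{form: locUnit}) let me invoke Proposition \ref{lem: graded mij}(1) to endow $A=\oplus_n A_n$ with the structure of a graded algebra, with $m_A$ determined by (\ref{form: coro grAlg1}) and $u_A=i_0^A\circ u_0^A$. The core of the argument is then to produce a family $(m_{a,b}^E)$ with $m_{a,b}^E:\frac{A_a}{A_{a-1}}\otimes\frac{A_b}{A_{b-1}}\to\frac{A_{a+b}}{A_{a+b-1}}$ satisfying the local axioms, and to apply Proposition \ref{lem: graded mij}(1) a second time to $E$.

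Construction of $m_{a,b}^E$ proceeds in two steps, dual to the construction of $\alpha_{a,b}$ and then $\Delta_{a,b}^E$ in Lemma \ref{lem: GrDirect coalg}. By right exactness of $(-)\otimes A_b$ the sequence $A_{a-1}\otimes A_b\to A_a\otimes A_b\to\frac{A_a}{A_{a-1}}\otimes A_b\to 0$ is exact. Using the first relation in (\ref{form: locComp alg}) (with $a-1$ in place of $a$) together with $p_{A_{a+b-1}}^{A_{a+b}}\circ i_{A_{a+b-1}}^{A_{a+b}}=0$, one checks that $p_{A_{a+b-1}}^{A_{a+b}}\circ m_{a,b}^A$ kills $i_{A_{a-1}}^{A_a}\otimes A_b$, hence factors uniquely through a morphism $\overline{m}_{a,b}:\frac{A_a}{A_{a-1}}\otimes A_b\to\frac{A_{a+b}}{A_{a+b-1}}$. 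By right exactness of $\frac{A_a}{A_{a-1}}\otimes(-)$ one has the exact sequence $\frac{A_a}{A_{a-1}}\otimes A_{b-1}\to\frac{A_a}{A_{a-1}}\otimes A_b\to\frac{A_a}{A_{a-1}}\otimes\frac{A_b}{A_{b-1}}\to 0$; precomposing $\overline{m}_{a,b}\circ(\frac{A_a}{A_{a-1}}\otimes i_{A_{b-1}}^{A_b})$ with the epimorphism $p_{A_{a-1}}^{A_a}\otimes A_{b-1}$ and invoking the second relation in (\ref{form: locComp alg}) shows this composite vanishes, so after cancelling the epimorphism one gets $\overline{m}_{a,b}\circ(\frac{A_a}{A_{a-1}}\otimes i_{A_{b-1}}^{A_b})=0$ and $\overline{m}_{a,b}$ factors uniquely through the desired $m_{a,b}^E$. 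Chasing the two factorizations yields 2), namely $m_{a,b}^E\circ(p_{A_{a-1}}^{A_a}\otimes p_{A_{b-1}}^{A_b})=p_{A_{a+b-1}}^{A_{a+b}}\circ m_{a,b}^A$.

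With 2) in hand I would verify (\ref{form: locMulti}) and (\ref{form: locUnit}) for $(m_{a,b}^E)$ and $u_0^E:=p_{A_{-1}}^{A_0}\circ u_0^A$ by precomposing each identity with a suitable epimorphism built from the projections $p_{A_{\bullet-1}}^{A_\bullet}$ (for associativity, $p_{A_{a-1}}^{A_a}\otimes p_{A_{b-1}}^{A_b}\otimes p_{A_{c-1}}^{A_c}$, which is epi since the tensor of epimorphisms is epi; for the unit relations, $p_{A_{d-1}}^{A_d}\otimes\mathbf{1}$), reducing them through 2) and the naturality of $r,l$ to (\ref{form: locMulti}) respectively (\ref{form: locUnit}) for $A$, and then cancelling the epimorphism. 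Proposition \ref{lem: graded mij}(1) then makes $E=\oplus_n\frac{A_n}{A_{n-1}}$ a graded algebra, giving 1) and, with $u_E=i_0^E\circ u_0^E$, statement 3). Finally I would check that $p:=\oplus_n p_{A_{n-1}}^{A_n}$ is an algebra homomorphism: since $(i_a^A\otimes i_b^A)_{a,b}$ is jointly epic (the tensor product commutes with direct sums), it suffices to verify $m_E\circ(p\otimes p)\circ(i_a^A\otimes i_b^A)=p\circ m_A\circ(i_a^A\otimes i_b^A)$, and both sides reduce via (\ref{form: coro grAlg1}) (for $A$ and for $E$) and 2) to $i_{a+b}^E\circ p_{A_{a+b-1}}^{A_{a+b}}\circ m_{a,b}^A$; compatibility with units is immediate from the definition of $u_E$. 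Uniqueness holds because $p$, a direct sum of cokernel projections, is an epimorphism, so $p\otimes p$ is an epimorphism and $m_E$ is forced by $m_E\circ(p\otimes p)=p\circ m_A$.

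The only genuine subtlety, mirroring the mild delicacy of the coalgebra proof, is the two-step factorization through the tensor product of cokernels: each step consumes one application of right exactness of a tensor functor, and in the second step one must first cancel the epimorphism $p_{A_{a-1}}^{A_a}\otimes A_{b-1}$ before applying the universal property of the cokernel. Everything else is the formal dualization of Lemma \ref{lem: GrDirect coalg}, with monomorphisms replaced by epimorphisms and $\ker$ replaced by $\mathrm{Coker}$.
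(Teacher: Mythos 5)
Your proposal is correct and is exactly the argument the paper intends: the paper's own proof of this lemma consists of the single remark that it is analogous to (i.e.\ the formal dual of) Lemma \ref{lem: GrDirect coalg}, and your two-step cokernel factorization using right exactness of the tensor functors, followed by the two applications of Proposition \ref{lem: graded mij}(1) and the epimorphism-cancellation checks, is precisely that dualization carried out in detail.
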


\begin{proof}
It is analogous to that of Lemma \ref{lem: GrDirect coalg}.
\end{proof}

\begin{lemma}
\label{lem: GrDirect Inverse}Let $\mathcal{M}$ be a cocomplete abelian
monoidal category such that the tensor product commutes with direct sums.
Let $((A_{a})_{a\in \mathbb{N}},(i_{A_{a}}^{A_{b}})_{a,b\in \mathbb{N}})$ be
an inverse system in $\mathcal{M}$ where, for $a\leq b$, $%
i_{A_{b}}^{A_{a}}:A_{b}\rightarrow A_{a}$ is a monomorphism.\ Assume that
there exists a family $\left( m_{a,b}^{A}\right) _{_{a,b\in
\mathbb{N}
}}$
\begin{equation*}
m_{a,b}^{A}:A_{a}\otimes A_{b}\rightarrow A_{a+b},
\end{equation*}%
of morphisms and a morphism $u_{0}^{A}:\mathbf{1}\rightarrow A_{0}$ which
satisfy (\ref{form: locMulti}), (\ref{form: locUnit}),
\begin{equation}
m_{a,b}^{A}\circ \left( i_{A_{a+1}}^{A_{a}}\otimes A_{b}\right)
=i_{A_{a+b+1}}^{A_{a+b}}\circ m_{a+1,b}^{A}\qquad \text{and}\qquad
m_{a,b}^{A}\circ \left( A_{a}\otimes i_{A_{b+1}}^{A_{b}}\right)
=i_{A_{a+b+1}}^{A_{a+b}}\circ m_{a,b+1}^{A}
\label{form: locComp alg Inverse}
\end{equation}%
for every $a,b,c\in
\mathbb{N}
.$ Set $A_{-1}:=0.$

Then $A=\oplus _{n\in \mathbb{N}}A_{n}$ is a graded algebra and there are
unique algebra structure on $\oplus _{n\in \mathbb{N}}\frac{A_{n}}{A_{n+1}}$
such that
\begin{equation*}
\oplus _{n\in \mathbb{N}}p_{A_{n+1}}^{A_{n}}:\oplus _{n\in \mathbb{N}%
}A_{n}\rightarrow \oplus _{n\in \mathbb{N}}\frac{A_{n}}{A_{n+1}}
\end{equation*}%
is an algebra homomorphism. Moreover

\begin{enumerate}
\item[1)] $E=\oplus _{n\in \mathbb{N}}\frac{A_{n}}{A_{n+1}}$ is a graded
algebra such that $\oplus _{n\in \mathbb{N}}p_{A_{n+1}}^{A_{n}}$ is a graded
homomorphism;

\item[2)] $m_{a,b}^{E}\circ \left( p_{A_{a+1}}^{A_{a}}\otimes
p_{A_{b+1}}^{A_{b}}\right) =p_{A_{a+b+1}}^{A_{a+b}}\circ m_{a,b}^{A};$

\item[3)] $u_{E}=i_{0}^{E}\circ p_{A_{1}}^{A_{0}}\circ u_{0}^{A}.$
\end{enumerate}
\end{lemma}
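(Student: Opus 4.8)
The plan is to obtain this result as the exact algebraic dual of Lemma~\ref{lem: GrDirect coalg}, so the cleanest route is to invoke duality rather than to recompute everything. The excerpt already establishes Lemma~\ref{lem: GrDirect coalg} for a \emph{direct} system of epimorphisms $\beta_{C_a}^{C_{a+1}}$ in the \emph{coalgebra} setting, producing a graded coalgebra structure on the direct sum of the kernels. The present statement is the algebra-side analogue for an \emph{inverse} system of monomorphisms $i_{A_{a+1}}^{A_a}$, producing a graded algebra structure on $\oplus_n \frac{A_n}{A_{n+1}}$ via the projections $p_{A_{n+1}}^{A_n}$. First I would pass to the dual monoidal category $\mathcal{M}^o$: by Definition~\ref{abelian assumptions}, $\mathcal{M}$ is cocomplete abelian monoidal if and only if $\mathcal{M}^o$ is complete coabelian monoidal, the tensor product still commutes with the (now) products, an algebra in $\mathcal{M}$ is a coalgebra in $\mathcal{M}^o$, a monomorphism becomes an epimorphism, a cokernel becomes a kernel, and an inverse system becomes a direct system. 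Under this correspondence the hypotheses~\eqref{form: locMulti}, \eqref{form: locUnit} and \eqref{form: locComp alg Inverse} turn literally into \eqref{form: locDelta}, \eqref{form: locEps} and \eqref{form: locComp Coalg}, with $m_{a,b}^A$ playing the role of $\Delta_{a,b}^C$ and $u_0^A$ that of $\varepsilon_0^C$.

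Having set up the dictionary, I would apply Lemma~\ref{lem: GrDirect coalg} inside $\mathcal{M}^o$. There the kernel $\left(E_n,i_{E_n}^{C_n}\right)=\ker\left(\beta_{C_n}^{C_{n+1}}\right)$ corresponds, back in $\mathcal{M}$, to the cokernel $\frac{A_n}{A_{n+1}}$ together with its projection $p_{A_{n+1}}^{A_n}$, because $i_{A_{n+1}}^{A_n}:A_{n+1}\to A_n$ is a monomorphism whose cokernel in $\mathcal{M}$ is exactly a kernel in $\mathcal{M}^o$. Lemma~\ref{lem: GrDirect coalg} then yields a unique coalgebra structure in $\mathcal{M}^o$ on $\oplus_n E_n$ making $\oplus_n i_{E_n}^{C_n}$ a coalgebra homomorphism, which is precisely a unique algebra structure in $\mathcal{M}$ on $\oplus_n \frac{A_n}{A_{n+1}}$ making $\oplus_n p_{A_{n+1}}^{A_n}$ an algebra homomorphism. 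Likewise, conclusions 1), 2), 3) of Lemma~\ref{lem: GrDirect coalg} dualize term by term into conclusions 1), 2), 3) here: the identity $\left(i_{E_a}^{C_a}\otimes i_{E_b}^{C_b}\right)\circ\Delta_{a,b}^E=\Delta_{a,b}^C\circ i_{E_{a+b}}^{C_{a+b}}$ becomes $m_{a,b}^E\circ\left(p_{A_{a+1}}^{A_a}\otimes p_{A_{b+1}}^{A_b}\right)=p_{A_{a+b+1}}^{A_{a+b}}\circ m_{a,b}^A$, and the counit formula becomes the unit formula $u_E=i_0^E\circ p_{A_1}^{A_0}\circ u_0^A$.

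The only point demanding genuine care, and hence the main obstacle, is the bookkeeping of the duality on the grading and on the index shift built into $\otimes$ for graded objects. In~\ref{claim 4.2} the graded tensor product is defined via coproducts $\oplus_{a+b=n}(X_a\otimes X_b)$ with injections $\gamma$ and projections $\omega$; dualizing replaces coproducts by products and swaps the roles of $\gamma$ and $\omega$, which is exactly why the multiplication morphisms $m_{a,b}^A:A_a\otimes A_b\to A_{a+b}$ correspond to comultiplication morphisms $\Delta_{a,b}^C:C_{a+b}\to C_a\otimes C_b$ pointing the opposite way. I would verify explicitly that, under $\mathcal{M}\leftrightarrow\mathcal{M}^o$, a graded algebra in the sense of~\ref{def: grAlg} corresponds to a graded coalgebra in the sense of~\ref{def: grCoalg}, and that Proposition~\ref{lem: graded mij} is the dual of Proposition~\ref{lem: graded Deltaij}; these are exactly the ingredients Lemma~\ref{lem: GrDirect coalg} consumes. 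Once this correspondence is recorded, the proof is a one-line appeal to duality, which matches the paper's stated convention that algebra-side results whose proofs are routine adaptations of the coalgebra ones are not written out in full; accordingly I would simply state: ``It is analogous to (indeed, dual to) that of Lemma~\ref{lem: GrDirect coalg}.''
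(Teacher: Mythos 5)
Your instinct---that this lemma is the algebra-side counterpart of Lemma \ref{lem: GrDirect coalg} and should not be reproved from scratch---agrees with the paper, whose entire proof is ``It is similar to that of Lemma \ref{lem: GrDirect}'' (which is in turn ``analogous to'' Lemma \ref{lem: GrDirect coalg}). But your plan to get it by a \emph{formal} appeal to duality in $\mathcal{M}^{o}$ has a genuine gap: graded objects in this paper are coproducts, and coproducts are not preserved by passing to the opposite category. A graded algebra in the sense of \ref{def: grAlg} lives on $\oplus _{n}A_{n}$, a coproduct of $\mathcal{M}$, whereas a graded coalgebra of $\mathcal{M}^{o}$ in the sense of \ref{def: grCoalg} would live on the coproduct of $\mathcal{M}^{o}$, i.e.\ the \emph{product} $\prod_{n}A_{n}$ of $\mathcal{M}$. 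For the same reason the hypothesis ``the tensor product commutes with direct sums'' of Lemma \ref{lem: GrDirect coalg}, read inside $\mathcal{M}^{o}$, becomes ``the tensor product commutes with direct products'' of $\mathcal{M}$, which is not assumed here. Consequently the verification you defer---that a graded algebra in the sense of \ref{def: grAlg} corresponds to a graded coalgebra in the sense of \ref{def: grCoalg}, and that Proposition \ref{lem: graded mij} is the dual of Proposition \ref{lem: graded Deltaij}---would in fact fail: those two propositions are analogous, not dual (note that both are stated for categories whose tensor product commutes with direct sums, and they carry separate citations in \cite{AM}). Applied literally, your argument produces a coalgebra structure on $\prod_{n}\frac{A_{n}}{A_{n+1}}$ under an unavailable hypothesis, not the asserted algebra structure on $\oplus_{n}\frac{A_{n}}{A_{n+1}}$.

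What does dualize cleanly---and what the paper's ``analogous'' proof actually exploits---is the componentwise part of the argument, since it involves only finite diagrams and one-sided exactness: right exactness of $(-)\otimes A_{b}$ gives the exact sequence $A_{a+1}\otimes A_{b}\rightarrow A_{a}\otimes A_{b}\rightarrow \frac{A_{a}}{A_{a+1}}\otimes A_{b}\rightarrow 0$; the identity $p_{A_{a+b+1}}^{A_{a+b}}\circ m_{a,b}^{A}\circ ( i_{A_{a+1}}^{A_{a}}\otimes A_{b}) =0$, which follows from (\ref{form: locComp alg Inverse}), lets $m_{a,b}^{A}$ factor in two stages through $\frac{A_{a}}{A_{a+1}}\otimes \frac{A_{b}}{A_{b+1}}$, yielding $m_{a,b}^{E}$ and assertion 2); and (\ref{form: locMulti}), (\ref{form: locUnit}) for $E$ follow from those for $A$ because the relevant projections are epimorphisms. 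The global assembly, however, must be carried out on the algebra side by invoking Proposition \ref{lem: graded mij} directly, not a purported dual of Proposition \ref{lem: graded Deltaij}. So the one-line appeal to duality should be replaced by: adapt the proof of Lemma \ref{lem: GrDirect coalg}, dualizing the local factorizations, and assemble the graded algebra with Proposition \ref{lem: graded mij}.
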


\begin{proof}
It is similar to that of Lemma \ref{lem: GrDirect}.
\end{proof}

\begin{theorem}
\label{teo: strongly Alg}With hypothesis and notations of Lemma \ref{lem:
GrDirect}, the following assertions are equivalent.

\begin{enumerate}
\item[$\left( 1\right) $] $A=\oplus _{n\in \mathbb{N}}A_{n}$ is a strongly $%
\mathbb{N}
$-graded algebra.

\item[$\left( 2\right) $] $E=\oplus _{n\in \mathbb{N}}\frac{A_{n}}{A_{n-1}}$
is a strongly $%
\mathbb{N}
$-graded algebra.
\end{enumerate}
\end{theorem}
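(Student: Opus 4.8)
The plan is to argue dually to Theorem~\ref{teo: strongly Alg co}, passing to the opposite abelian monoidal category $\mathcal{M}^{o}$ and using relation 2) of Lemma~\ref{lem: GrDirect}, namely $m_{a,b}^{E}\circ(p_{A_{a-1}}^{A_a}\otimes p_{A_{b-1}}^{A_b})=p_{A_{a+b-1}}^{A_{a+b}}\circ m_{a,b}^{A}$, as the bridge between $A$ and $E$. I use freely that the tensor functors are right exact (so a tensor product of epimorphisms is an epimorphism) and that if $f\circ g$ is an epimorphism then so is $f$.

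For $(1)\Rightarrow(2)$, assume each $m_{a,b}^{A}$ is an epimorphism. In relation 2) the right-hand side $p_{A_{a+b-1}}^{A_{a+b}}\circ m_{a,b}^{A}$ is a composite of the cokernel projection $p_{A_{a+b-1}}^{A_{a+b}}$ with $m_{a,b}^{A}$, hence an epimorphism; so the left-hand side is an epimorphism, and since it factors through $m_{a,b}^{E}$ on the left, $m_{a,b}^{E}$ is an epimorphism. Thus $E$ is strongly $\mathbb{N}$-graded.

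For $(2)\Rightarrow(1)$ I would first reduce to the case $b=1$: by \cite[Theorem 2.22]{AM} read in $\mathcal{M}^{o}$ (where the graded algebra $A$ becomes a graded coalgebra, and strong grading corresponds to the comultiplication components being monomorphisms), it suffices to prove that $m_{a,1}^{A}:A_a\otimes A_1\rightarrow A_{a+1}$ is an epimorphism for every $a$, which I prove by induction on $a$. For $a=0$, the unit axiom (\ref{form: locUnit}) gives $m_{0,1}^{A}\circ(u_0^{A}\otimes A_1)=l_{A_1}$, an isomorphism, so $m_{0,1}^{A}$ is an epimorphism. For the inductive step, assume $m_{a-1,1}^{A}$ is an epimorphism. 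Relation 2) with $b=1$ gives $p_{A_a}^{A_{a+1}}\circ m_{a,1}^{A}=m_{a,1}^{E}\circ(p_{A_{a-1}}^{A_a}\otimes p_{A_0}^{A_1})$, whose right-hand side is an epimorphism by $(2)$ and the remark on tensor products; hence $p_{A_a}^{A_{a+1}}\circ m_{a,1}^{A}$ is an epimorphism. Moreover (\ref{form: locComp alg}), with indices $a\mapsto a-1$ and $b=1$, yields the commuting square $m_{a,1}^{A}\circ(i_{A_{a-1}}^{A_a}\otimes A_1)=i_{A_a}^{A_{a+1}}\circ m_{a-1,1}^{A}$.

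These data assemble into the commutative diagram
\begin{equation*}
\xymatrix{ A_{a-1}\otimes A_1 \ar[d]_{m_{a-1,1}^A} \ar[r]^{i_{A_{a-1}}^{A_a}\otimes A_1} & A_a\otimes A_1 \ar[d]^{m_{a,1}^A} \\ A_a \ar[r]^{i_{A_a}^{A_{a+1}}} & A_{a+1} \ar[r]^{p_{A_a}^{A_{a+1}}} & \frac{A_{a+1}}{A_a} \ar[r] & 0 }
\end{equation*}
in which $p_{A_a}^{A_{a+1}}$ is the cokernel of $i_{A_a}^{A_{a+1}}$. Reversing every arrow turns this into a diagram of the shape required by Lemma~\ref{lem: Bourbaki co}, with $p_{A_a}^{A_{a+1}}$ now playing the role of the kernel of $i_{A_a}^{A_{a+1}}$; applying that lemma in $\mathcal{M}^{o}$, where the hypotheses that $m_{a-1,1}^{A}$ and $p_{A_a}^{A_{a+1}}\circ m_{a,1}^{A}$ are monomorphisms become exactly the two statements on epimorphisms just established, gives that $m_{a,1}^{A}$ is a monomorphism in $\mathcal{M}^{o}$, i.e.\ an epimorphism in $\mathcal{M}$. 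This closes the induction and, with the reduction, yields $(1)$. I expect the main obstacle to be this $(2)\Rightarrow(1)$ direction: correctly dualizing Lemma~\ref{lem: Bourbaki co} and checking that the square built from (\ref{form: locComp alg}) and relation 2) has precisely the shape that lemma demands, with $p_{A_a}^{A_{a+1}}$ in the role of the (co)kernel. The remaining verifications are routine manipulations of epimorphisms.
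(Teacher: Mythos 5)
Your proposal is correct and is precisely the dualization of the paper's proof of Theorem \ref{teo: strongly Alg co}, which is exactly what the paper's one-line proof ("analogous") intends: $(1)\Rightarrow(2)$ from relation 2) of Lemma \ref{lem: GrDirect}, and $(2)\Rightarrow(1)$ by reducing to $m_{a,1}^{A}$ via the algebra version of \cite[Theorem 2.22]{AM} (i.e.\ \cite[Theorem 3.11]{AM}), inducting on $a$ with the unit axiom as base case and the dual of Lemma \ref{lem: Bourbaki co} applied to the square coming from (\ref{form: locComp alg}). All the hypotheses you feed into the dualized lemma check out, so there is nothing to add.
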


\begin{proof}
It is analogous to that of Theorem \ref{teo: strongly Alg co}.
\end{proof}

\begin{claim}
\label{claim: Ideal}Recall from \cite{AMS} that an \emph{ideal} of an
algebra $(A,m,u)$ in a monoidal category $(\mathcal{M},\otimes ,\mathbf{1})$
is a pair $(I,i_{I}^{A})$ where $I$ is an $A$-bimodule and%
\begin{equation*}
i_{I}^{A}:I\rightarrow A
\end{equation*}%
is a morphism of $A$-bimodules which is a monomorphism in $\mathcal{M}$.
\newline
A morphism $f:I\rightarrow J$ in $_{A}\mathcal{M}_{A}$, where $I$,$J$ are
two ideals, is called a \emph{morphism of ideals} whenever
\begin{equation*}
\diagMorphIdeal%
\end{equation*}%
Note that $f$ is a monomorphism in $\mathcal{M}$ as $i_{I}^{A}$ is a
monomorphism. Moreover $f$ is unique, as $i_{J}^{A}$ is a monomorphism.%
\newline
\end{claim}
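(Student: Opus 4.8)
The content of this claim that genuinely needs justifying is the two assertions that close it: that a morphism of ideals $f\colon I\rightarrow J$ is automatically a monomorphism in $\mathcal{M}$, and that such an $f$ is unique. The plan is to read off from the defining triangle the single commutativity relation $i_{J}^{A}\circ f=i_{I}^{A}$, and then to invoke nothing beyond the elementary cancellation properties of monomorphisms, together with the fact, built into the very notion of ideal, that both $i_{I}^{A}$ and $i_{J}^{A}$ are monomorphisms in $\mathcal{M}$.

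First I would establish that $f$ is a monomorphism. Given $\xi,\xi'\colon T\rightarrow I$ with $f\circ\xi=f\circ\xi'$, postcomposition with $i_{J}^{A}$ and the relation $i_{J}^{A}\circ f=i_{I}^{A}$ yield $i_{I}^{A}\circ\xi=i_{I}^{A}\circ\xi'$, whence $\xi=\xi'$ because $i_{I}^{A}$ is a monomorphism. This is simply the standard observation that if a composite $g\circ h$ is a monomorphism then its right factor $h$ must be one as well, applied here to the factorization $i_{I}^{A}=i_{J}^{A}\circ f$.

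For the uniqueness statement I would suppose that $f,f'\colon I\rightarrow J$ are two morphisms in ${}_{A}\mathcal{M}_{A}$ both rendering the triangle commutative, so that $i_{J}^{A}\circ f=i_{I}^{A}=i_{J}^{A}\circ f'$; left-cancelling the monomorphism $i_{J}^{A}$ then forces $f=f'$. Note that no separate check that $f$ respects the $A$-bimodule structure is required, since this is already part of the hypothesis that $f$ is a morphism in ${}_{A}\mathcal{M}_{A}$.

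I do not anticipate any genuine obstacle here, as both assertions reduce immediately to left- and right-cancellability of monomorphisms. The only points deserving a little care are reading off the correct orientation $i_{J}^{A}\circ f=i_{I}^{A}$ from the triangle, and keeping track of which of the two monomorphisms, $i_{I}^{A}$ or $i_{J}^{A}$, is being cancelled in each of the two arguments.
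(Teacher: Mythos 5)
Your proof is correct and follows exactly the argument the paper intends: the triangle gives $i_{J}^{A}\circ f=i_{I}^{A}$, so $f$ is a monomorphism as a right factor of the monomorphism $i_{I}^{A}$, and $f$ is unique by left-cancelling the monomorphism $i_{J}^{A}$. The paper states these two facts without further elaboration, so your write-up simply makes the same reasoning explicit.
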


\begin{claim}
Let $\mathcal{M}$ be an abelian monoidal category.\newline
Let $(I,i_{I}^{A})$ and $(J,i_{J}^{A})$ be two subobjects of an algebra $%
(A,m,u)$. Set
\begin{gather*}
m_{I,J}:=m(i_{I}^{A}\otimes i_{J}^{A}):I\otimes J\rightarrow A \\
(Q_{I,J},\pi _{I,J})=\text{Coker}\left( m_{I,J}\right) ,\qquad \pi
_{I,J}^{A}:A\rightarrow Q_{I,J} \\
(IJ,i_{IJ}^{A})=\mathrm{Ker}\left( \pi _{I,J}^{A}\right) =\text{\textrm{Im}}%
\left( m_{I,J}\right) ,\qquad i_{IJ}^{A}:IJ\rightarrow A
\end{gather*}%
The subobject $(IJ,i_{IJ}^{A})$ of $A$ is called \emph{the product of $I$
and $J$}.\newline
Moreover, we have the following exact sequence:%
\begin{equation}
0\longrightarrow IJ\overset{i_{IJ}^{A}}{\longrightarrow }A\overset{\pi
_{I,J}^{A}}{\longrightarrow }Q_{I,J}\longrightarrow 0.  \label{ec:IJ=Ker}
\end{equation}%
Since $(IJ,i_{IJ}^{A})=\mathrm{Ker}\left( \pi _{I,J}^{A}\right) \ $and $\pi
_{I,J}^{A}m_{I,J}=0$, by the universal property of the kernel, there is a
unique morphism $\overline{m}_{I,J}:I\otimes J\rightarrow IJ$ such that the
following diagram
\begin{equation*}
\diagMultIdeal%
\end{equation*}%
is commutative. Since $IJ=$\textrm{Im}$\left( m_{I,J}\right) ,$ it comes out
that $\overline{m}_{I,J}$ is an epimorphism.\newline
Consider the case $I=A.$\newline

Assume now that $(I,i_{I}^{A})$ and $(J,i_{J}^{A})$ are two ideals of $%
(A,m,u)$.

In this case $(IJ,i_{IJ}^{A})$ is an ideal of $A$ and $m_{I,J}\in {_{A}%
\mathcal{M}_{A}}$ so that it is straightforward to prove that $Q_{I,J}$ is
an algebra and that $\pi _{I,J}$ and algebra homomorphism.\newline
Since $i_{J}^{A}$ is a morphism in $_{A}\mathcal{M}$, we have
\begin{equation*}
m_{A,J}=m\circ (\mathrm{Id}_{A}\otimes i_{J}^{A})=i_{J}^{A}\circ \mu
_{J}^{l}.
\end{equation*}%
Since $i_{J}^{A}$ is a monomorphism and $\mu _{J}^{l}$ an epimorphism, we
deduce that%
\begin{equation*}
(AJ,iv_{AJ})=\text{\textrm{Im}}\left( m_{A,J}\right) =\left(
J,i_{J}^{A}\right) .
\end{equation*}%
Analogously, in the case $J=A,$ one has%
\begin{equation*}
(IA,i_{IA}^{A})=\left( I,i_{I}^{A}\right) .
\end{equation*}
\end{claim}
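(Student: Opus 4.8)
The assertions collected in this claim are the algebra-side mirror of those established for subcoalgebras in Claim \ref{claim: wedge}, read in the opposite category; so the plan is to transcribe the kernel/cokernel arguments there into their cokernel/kernel counterparts, and then to supply the two genuinely monoidal points, namely that $IJ$ is an ideal and that $Q_{I,J}$ carries an induced algebra structure.

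First I would set up the factorization and the exactness. Since $(Q_{I,J},\pi_{I,J})=\mathrm{Coker}(m_{I,J})$ we have $\pi_{I,J}^{A}\circ m_{I,J}=0$, and since $(IJ,i_{IJ}^{A})=\mathrm{Ker}(\pi_{I,J}^{A})$ the universal property of the kernel yields a unique $\overline{m}_{I,J}:I\otimes J\to IJ$ with $i_{IJ}^{A}\circ\overline{m}_{I,J}=m_{I,J}$, which is the content of the diagram. The sequence (\ref{ec:IJ=Ker}) is exact because $i_{IJ}^{A}$ is by definition the kernel of $\pi_{I,J}^{A}$, while $\pi_{I,J}^{A}$, being a cokernel, is an epimorphism. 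That $\overline{m}_{I,J}$ is an epimorphism is then immediate: as $(IJ,i_{IJ}^{A})=\mathrm{Im}(m_{I,J})$, the equality $i_{IJ}^{A}\circ\overline{m}_{I,J}=m_{I,J}$ is exactly the canonical epi-mono factorization of $m_{I,J}$ through its image, so $\overline{m}_{I,J}$ is its epi part (this is dual to the monomorphy of $\overline{\Delta}_{C,D}$ in Claim \ref{claim: wedge}).

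Next I would treat the ideal case. Equip $I\otimes J$ with the $A$-bimodule structure given by the left $A$-action on $I$ and the right $A$-action on $J$. Because $i_{I}^{A}$ and $i_{J}^{A}$ are morphisms of $A$-bimodules and $m$ is associative and unital, a short check shows $m_{I,J}=m\circ(i_{I}^{A}\otimes i_{J}^{A})$ lies in ${}_{A}\mathcal{M}_{A}$; hence its image $(IJ,i_{IJ}^{A})$ is a sub-bimodule of $A$, i.e. an ideal. To put an algebra structure on $Q_{I,J}=A/IJ$ one must descend $m$ along $\pi_{I,J}^{A}$: one uses that $IJ$ is a two-sided ideal to obtain $\pi_{I,J}^{A}\circ m\circ(i_{IJ}^{A}\otimes A)=0=\pi_{I,J}^{A}\circ m\circ(A\otimes i_{IJ}^{A})$, and that $\otimes$ is right exact so that $\pi_{I,J}^{A}\otimes\pi_{I,J}^{A}$ is the relevant cokernel. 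The universal property then produces a unique $m_{Q}$ with $m_{Q}\circ(\pi_{I,J}^{A}\otimes\pi_{I,J}^{A})=\pi_{I,J}^{A}\circ m$, and associativity and unitality of $m_{Q}$ follow by cancelling the epimorphisms built from $\pi_{I,J}^{A}$ (epimorphy of the relevant tensor maps again being a consequence of right exactness). This exhibits $\pi_{I,J}^{A}$ as an algebra homomorphism. I expect this descent step to be the only real obstacle, since it is the single place where the formal kernel/cokernel manipulations must be supplemented by the right-exactness hypothesis on the tensor functors.

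Finally I would dispose of the degenerate cases. For $I=A$, the fact that $i_{J}^{A}$ is a morphism in ${}_{A}\mathcal{M}$ gives $m_{A,J}=m\circ(\mathrm{Id}_{A}\otimes i_{J}^{A})=i_{J}^{A}\circ\mu_{J}^{l}$. The left action $\mu_{J}^{l}$ is a split epimorphism, since $\mu_{J}^{l}\circ(u\otimes J)=l_{J}$ is invertible, and $i_{J}^{A}$ is a monomorphism; therefore $\mathrm{Im}(m_{A,J})=\mathrm{Im}(i_{J}^{A})=J$, that is $AJ=J$. The identity $IA=I$ follows symmetrically using the right action $\mu_{I}^{r}$. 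All of these are routine verifications once the corresponding statements in Claim \ref{claim: wedge} are dualized.
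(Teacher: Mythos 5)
Your proposal is correct and follows essentially the same route as the paper: the factorization $i_{IJ}^{A}\circ\overline{m}_{I,J}=m_{I,J}$ via the universal property of the kernel, the epimorphy of $\overline{m}_{I,J}$ from the image factorization, the observation that $m_{I,J}\in{_{A}\mathcal{M}_{A}}$ to get the ideal and quotient-algebra structure (which the paper leaves as ``straightforward''), and the identity $m_{A,J}=i_{J}^{A}\circ\mu_{J}^{l}$ for the degenerate cases. The extra detail you supply on descending $m$ to $Q_{I,J}$ via right exactness of the tensor functors is exactly the intended filling-in of that step.
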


\begin{claim}
\label{claim: Ideal powers}Let $A$ be an algebra in an abelian monoidal
category $\mathcal{M}$ and let $(I,i_{I}^{A})$ be a subobject of $A$. We can
define (see \cite{AMS}) the $n$-th power $\left( I^{n},i_{I^{n}}^{A}\right) $
of $I,$ where $i_{I^{n}}^{A}:I^{n}\rightarrow A.$ By Definition
\begin{equation*}
I^{0}=A\qquad \text{and}\qquad I^{n}=I^{n-1}I,\text{ for every }n\geq 1.
\end{equation*}%
For every subobject $I,J,K$ of $A$ one can check that $\left( \left(
IJ\right) K,i_{\left( IJ\right) K}^{A}\right) $ and $\left( I\left(
JK\right) ,i_{I\left( JK\right) }^{A}\right) $ are isomorphic and thus can
be identified. 
Therefore $I^{i}I^{j}=I^{i+j}$ and we can consider $\overline{m}%
_{I^{i},I^{j}}:I^{i}\otimes I^{j}\rightarrow I^{i+j}.$ We set
\begin{equation*}
m_{i,j}^{I}:=\overline{m}_{I^{i},I^{j}}:I^{i}\otimes I^{j}\rightarrow
I^{i+j}.
\end{equation*}%
Assume now that $(I,i_{I}^{A})$ is an ideal of $A$. Then there is a (unique)
morphism of ideals%
\begin{equation*}
i_{I^{n+1}}^{I^{n}}:I^{n+1}\rightarrow I^{n},\text{ for every }n\in
\mathbb{N}
.
\end{equation*}%
Since $m$ is associative and by definition of $m_{i,j}^{I}$, it is
straightforward to prove that $m_{i,j}^{I}$ fulfills
\begin{gather}
m_{a+b,c}^{I}\left( m_{a,b}^{I}\otimes I^{c}\right) =m_{a,b+c}^{I}\left(
I^{a}\otimes m_{b,c}^{I}\right) .  \label{form: locMultiIdeal} \\
m_{d,0}^{I}\left( I^{d}\otimes u\right) =r_{I^{d}},\qquad m_{0,d}^{I}\left(
u\otimes I^{d}\right) =l_{I^{d}}  \label{form: locUnitIdeal} \\
m_{a,b}^{I}\circ \left( i_{I^{a+1}}^{I^{a}}\otimes I^{b}\right)
=i_{I^{a+b+1}}^{I^{a+b}}\circ m_{a+1,b}^{I},\qquad m_{a,b}^{I}\circ \left(
I^{a}\otimes i_{I^{b+1}}^{I^{b}}\right) =i_{I^{a+b+1}}^{I^{a+b}}\circ
m_{a,b+1}^{I}  \label{form: locComp algIdeal}
\end{gather}
\end{claim}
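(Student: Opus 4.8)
The plan is to run the algebra-side mirror of the coalgebra claim that precedes Theorem~\ref{teo: gr dual}: each identity asserted here lives in $\mathrm{Hom}(-,I^{i+j})$ for suitable $i,j$, so in every case I would postcompose both sides with the monomorphism $i_{I^{i+j}}^{A}:I^{i+j}\to A$, transport the equality into $\mathrm{Hom}(-,A)$, and there reduce it to the associativity of $m$. The single input feeding all these cancellations is the defining property of $\overline{m}$ from the preceding claim, namely
\begin{equation*}
i_{I^{i+j}}^{A}\circ m_{i,j}^{I}=i_{I^{i+j}}^{A}\circ\overline{m}_{I^{i},I^{j}}=m_{I^{i},I^{j}}=m\circ(i_{I^{i}}^{A}\otimes i_{I^{j}}^{A}),
\end{equation*}
together with the fact that $i_{I^{i+j}}^{A}$ is a monomorphism, so it can be cancelled on the left.

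First I would establish the associativity identification $(IJ)K=I(JK)$ that makes the powers $I^{n}$ and the maps $m_{i,j}^{I}$ well defined. Using the displayed relation for $IJ$ and for $JK$, both $(IJ)K=\mathrm{Im}\,m_{IJ,K}$ and $I(JK)=\mathrm{Im}\,m_{I,JK}$ can be computed as the image of the common triple product, since $m_{IJ,K}\circ(\overline{m}_{I,J}\otimes K)=m\circ(m\otimes A)\circ(i_{I}^{A}\otimes i_{J}^{A}\otimes i_{K}^{A})$ and $m_{I,JK}\circ(I\otimes\overline{m}_{J,K})=m\circ(A\otimes m)\circ(i_{I}^{A}\otimes i_{J}^{A}\otimes i_{K}^{A})$. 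Here $\overline{m}_{I,J}\otimes K$ and $I\otimes\overline{m}_{J,K}$ are epimorphisms because the tensor functors are right exact (Definition~\ref{abelian assumptions}) and hence preserve epimorphisms, so precomposing with them leaves the image unchanged; the two resulting composites agree by associativity of $m$, whence $(IJ)K=I(JK)$ as subobjects of $A$. Next, for $(I,i_{I}^{A})$ an ideal, I would produce $i_{I^{n+1}}^{I^{n}}$: since $I^{n}$ is again an ideal, its right action satisfies $i_{I^{n}}^{A}\circ\mu_{I^{n}}^{r}=m\circ(i_{I^{n}}^{A}\otimes A)$, so $m_{I^{n},I}=i_{I^{n}}^{A}\circ\mu_{I^{n}}^{r}\circ(I^{n}\otimes i_{I}^{A})$ factors through the monomorphism $i_{I^{n}}^{A}$. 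Taking images, $I^{n+1}=\mathrm{Im}\,m_{I^{n},I}$ sits as a subobject of $I^{n}$; the inclusion is the desired $i_{I^{n+1}}^{I^{n}}$, it satisfies $i_{I^{n}}^{A}\circ i_{I^{n+1}}^{I^{n}}=i_{I^{n+1}}^{A}$, it is unique because $i_{I^{n}}^{A}$ is a monomorphism, and a cancellation of $i_{I^{n}}^{A}$ against the bimodule identities shows it commutes with the left and right $A$-actions, so it is a morphism of ideals in the sense of Claim~\ref{claim: Ideal}.

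For the three relations I would argue as follows. For (\ref{form: locMultiIdeal}), postcompose both sides with $i_{I^{a+b+c}}^{A}$; applying the displayed relation twice turns the left side into $m\circ(m\otimes A)\circ(i_{I^{a}}^{A}\otimes i_{I^{b}}^{A}\otimes i_{I^{c}}^{A})$ and the right side into $m\circ(A\otimes m)\circ(i_{I^{a}}^{A}\otimes i_{I^{b}}^{A}\otimes i_{I^{c}}^{A})$, which coincide by associativity of $m$, and cancelling the monomorphism closes the case. For (\ref{form: locUnitIdeal}), note $I^{0}=A$ with $I^{d}A=I^{d}$ and $AI^{d}=I^{d}$, so unwinding the definition of $\overline{m}$ and cancelling $i_{I^{d}}^{A}$ identifies $m_{d,0}^{I}=\mu_{I^{d}}^{r}$ and $m_{0,d}^{I}=\mu_{I^{d}}^{l}$; the module unit axioms then give $m_{d,0}^{I}\circ(I^{d}\otimes u)=r_{I^{d}}$ and $m_{0,d}^{I}\circ(u\otimes I^{d})=l_{I^{d}}$. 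For (\ref{form: locComp algIdeal}), postcompose with $i_{I^{a+b}}^{A}$ and use $i_{I^{a}}^{A}\circ i_{I^{a+1}}^{I^{a}}=i_{I^{a+1}}^{A}$ together with the displayed relation: both sides collapse to $m_{I^{a+1},I^{b}}=m\circ(i_{I^{a+1}}^{A}\otimes i_{I^{b}}^{A})$, and cancelling the monomorphism finishes it (the second identity is symmetric, cancelling $i_{I^{b}}^{A}$ on the other tensorand).

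The only step with genuine content is the associativity identification $(IJ)K=I(JK)$, where one must reason carefully about images of composites with epimorphisms and invoke right exactness of $\otimes$; the existence and bimodule nature of $i_{I^{n+1}}^{I^{n}}$, and all three relations, are then purely formal cancellations of a monomorphism $i_{I^{\bullet}}^{A}$ against the associativity of $m$ and the module axioms, exactly dual to the coalgebra computation already carried out before Theorem~\ref{teo: gr dual}.
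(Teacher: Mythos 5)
Your proposal is correct and follows exactly the route the paper intends: the paper leaves these verifications as ``straightforward,'' and the analogous computations it does write out (part 3 of the proof of Lemma \ref{lem: mLambda} in the Appendix) use precisely your pattern of postcomposing with the monomorphism $i_{I^{\bullet}}^{A}$, reducing to associativity of $m$ via the defining relation $i_{IJ}^{A}\circ\overline{m}_{I,J}=m\circ(i_{I}^{A}\otimes i_{J}^{A})$, and cancelling. Your treatment of the one nontrivial point, $(IJ)K=I(JK)$ via right exactness of the tensor functors and invariance of images under precomposition with epimorphisms, is also the standard and correct argument.
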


\begin{theorem}
\label{teo: gr}Let $\mathcal{M}$ be a cocomplete abelian monoidal category
such that the tensor product commutes with direct sums.

Let $\left( A,m,u\right) $ be an algebra in $\mathcal{M}$ and let $\left(
I,i_{I}\right) $ be an ideal of $A$ in $\mathcal{M}$. For every $n\in
\mathbb{N}
$, we set
\begin{equation*}
gr_{I}^{n}A=\frac{I^{n}}{I^{n+1}}.
\end{equation*}%
Then $\oplus _{n\in \mathbb{N}}I^{n}$ is a graded algebra and there are
unique algebra structure on $gr_{I}A:=\oplus _{n\in \mathbb{N}}gr_{I}^{n}A$
such that
\begin{equation*}
\oplus _{n\in \mathbb{N}}p_{I^{n+1}}^{I^{n}}:\oplus _{n\in \mathbb{N}%
}I^{n}\rightarrow gr_{I}A
\end{equation*}%
is an algebra homomorphism. Moreover

\begin{enumerate}
\item[1)] $gr_{I}A$ is a graded algebra such that $\oplus _{n\in \mathbb{N}%
}\pi _{n}^{I}$ is a graded homomorphism;

\item[2)]
\begin{equation}
m_{a,b}^{gr_{I}A}\circ \left( p_{I^{a+1}}^{I^{a}}\otimes
p_{I^{b+1}}^{I^{b}}\right) =p_{I^{a+b+1}}^{I^{a+b}}\circ m_{a,b}^{I},\text{
for every }a,b\in
\mathbb{N}
\label{form: multi gr_IE}
\end{equation}

\item[3)] $u_{gr_{I}A}=i_{0}^{gr_{I}A}\circ p_{I}^{A}\circ u$
\end{enumerate}

Moreover $gr_{I}A$ is a strongly $%
\mathbb{N}
$-graded algebra.
\end{theorem}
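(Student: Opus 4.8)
The plan is to mirror the proof of Theorem \ref{teo: gr dual}, replacing the coalgebra machinery of Lemma \ref{lem: GrDirect coalg} by its algebra counterpart Lemma \ref{lem: GrDirect Inverse}. The starting observation is that the powers of $I$ form a descending chain $A=I^{0}\supseteq I\supseteq I^{2}\supseteq \cdots $, so that $\left( (I^{n})_{n\in \mathbb{N}},(i_{I^{n+1}}^{I^{n}})_{n}\right) $ is an inverse system in the sense of Lemma \ref{lem: GrDirect Inverse}, with quotients $A_{n}/A_{n+1}=I^{n}/I^{n+1}=gr_{I}^{n}A$ and projections $p_{A_{n+1}}^{A_{n}}=p_{I^{n+1}}^{I^{n}}$.

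First I would check that the hypotheses of Lemma \ref{lem: GrDirect Inverse} are met by setting $A_{a}:=I^{a}$, $m_{a,b}^{A}:=m_{a,b}^{I}$, $u_{0}^{A}:=u$ and $i_{A_{a+1}}^{A_{a}}:=i_{I^{a+1}}^{I^{a}}$. This is immediate, since formulas (\ref{form: locMultiIdeal}), (\ref{form: locUnitIdeal}) and (\ref{form: locComp algIdeal}) of Claim \ref{claim: Ideal powers} are exactly (\ref{form: locMulti}), (\ref{form: locUnit}) and (\ref{form: locComp alg Inverse}) under this identification. Applying the lemma then yields at once that $\oplus _{n}I^{n}$ is a graded algebra, that $gr_{I}A=\oplus _{n}I^{n}/I^{n+1}$ carries a unique algebra structure making $\oplus _{n}p_{I^{n+1}}^{I^{n}}$ an algebra homomorphism, and the three numbered assertions 1), 2), 3), which are precisely conclusions 1), 2), 3) of the lemma (noting $p_{A_{1}}^{A_{0}}=p_{I}^{A}$).

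It remains to establish the final claim that $gr_{I}A$ is strongly $\mathbb{N}$-graded, and here I would argue dually to Theorem \ref{teo: gr dual}. Starting from (\ref{form: multi gr_IE}) in item 2), namely $m_{a,b}^{gr_{I}A}\circ (p_{I^{a+1}}^{I^{a}}\otimes p_{I^{b+1}}^{I^{b}})=p_{I^{a+b+1}}^{I^{a+b}}\circ m_{a,b}^{I}$, I observe that the right-hand side is an epimorphism: $p_{I^{a+b+1}}^{I^{a+b}}$ is a cokernel projection, and $m_{a,b}^{I}=\overline{m}_{I^{a},I^{b}}$ is an epimorphism as recorded in Claim \ref{claim: Ideal powers}. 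Since a left factor of an epimorphism is an epimorphism, it follows that $m_{a,b}^{gr_{I}A}$ is an epimorphism for all $a,b\in \mathbb{N}$, which is exactly the condition of Definition \ref{def: strongly grAlg}.

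The whole argument is thus a formal dualization: all the categorical bookkeeping (existence of the graded structure, the compatibility identity 2)) is carried out once and for all inside Lemma \ref{lem: GrDirect Inverse}, so no genuine obstacle arises. The only point meriting a moment's attention is the epimorphism property of $m_{a,b}^{I}$, which is the algebra-side counterpart of the monomorphism property of $\overline{\Delta }$ used in the coalgebra proof; both are supplied by the respective factorizations through image/coimage established earlier.
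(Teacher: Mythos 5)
Your proposal is correct and coincides with the paper's own proof: the authors likewise apply Lemma \ref{lem: GrDirect Inverse} to the family $(I^{n})_{n\in \mathbb{N}}$ via the identities (\ref{form: locMultiIdeal}), (\ref{form: locUnitIdeal}), (\ref{form: locComp algIdeal}), and then deduce strong gradedness from item 2) because $p_{I^{a+b+1}}^{I^{a+b}}$ and $m_{a,b}^{I}$ are epimorphisms. No further comment is needed.
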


\begin{proof}
By (\ref{form: locMultiIdeal}), (\ref{form: locUnitIdeal}) and (\ref{form:
locComp algIdeal}), we can apply Lemma \ref{lem: GrDirect Inverse} to the
family $(I^{n})_{n\in
\mathbb{N}
}.$ It remains to prove the last assertion. From 2), since both $%
p_{I^{a+b+1}}^{I^{a+b}}$ and $m_{a,b}^{I}$ are epimorphisms, we get that $%
m_{a,b}^{gr_{I}A}$ is an epimorphism too, for every $a,b\in \mathbb{N}.$
Thus $gr_{I}A$ is a strongly $%
\mathbb{N}
$-graded algebra.
\end{proof}

\begin{definition}
Let $\mathcal{M}$ be a cocomplete abelian monoidal category such that the
tensor product commutes with direct sums.

Given an ideal $I$ of an algebra $A$ in $\mathcal{M}$, the strongly $%
\mathbb{N}
$-graded algebra $gr_{I}A$ defined in Theorem \ref{teo: gr} will be called
the \emph{associated graded algebra }(of $A$ with respect to $I$).
\end{definition}

\begin{theorem}
\label{teo: graded}Let $\mathcal{M}$ be a cocomplete abelian monoidal
category such that the tensor product commutes with direct sums. Let $I$ be
an ideal of an algebra $A$ in $\mathcal{M}$ an let $gr_{I}A$ be the
associated graded algebra.

Let
\begin{equation*}
T:=T_{\frac{A}{I}}(\frac{I}{I^{2}})
\end{equation*}
be the tensor algebra. Then there is a unique algebra homomorphism
\begin{equation*}
\varphi :T_{\frac{A}{I}}(\frac{I}{I^{2}})\rightarrow gr_{I}A,
\end{equation*}%
such that $\varphi \circ i_{0}^{T}=i_{0}^{gr_{I}A}$ and $\varphi \circ
i_{1}^{T}=i_{1}^{gr_{I}A}$. \newline
Moreover $\varphi $ is a graded algebra homomorphism with%
\begin{equation*}
\varphi _{t}=p_{t}^{gr_{I}A}\circ \overline{m}_{gr_{I}A}^{t-1}\circ \left(
i_{1}^{gr_{I}A}\right) ^{\otimes _{A_{0}}t}\text{ for every }t\in
\mathbb{N}%
\end{equation*}%
and the following equivalent assertions hold.

$\left( a\right) $ $gr_{I}A$ is a strongly $%
\mathbb{N}
$-graded algebra.

$\left( a^{\prime }\right) $ $m_{a,1}^{gr_{I}A}:gr_{I}^{a}A\otimes
gr_{I}^{1}A\rightarrow gr_{I}^{a+1}A$ is an epimorphism for every $a\in
\mathbb{N}
$.

$\left( b\right) \ \varphi _{n}$ is an epimorphism for every $n\in
\mathbb{N}
$.

$\left( c\right) $ $\varphi $ is an epimorphism.

$\left( d\right) $ $\oplus _{i\geq n}gr_{I}^{i}A=\left( \oplus _{i\geq
1}gr_{I}^{i}A\right) ^{n},$ for every $n\in \mathbb{N}.$

$\left( e\right) $ $\oplus _{i\geq 2}gr_{I}^{i}A=\left( \oplus _{i\geq
1}gr_{I}^{i}A\right) ^{2}$.
\end{theorem}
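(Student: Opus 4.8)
The plan is to follow the dual of the coalgebra argument of Theorem \ref{teo: graded dual}, systematically replacing the cotensor coalgebra by the tensor algebra, cokernels by kernels, and monomorphisms by epimorphisms. First I would construct $\varphi$ via the universal property of the relative tensor algebra $T_{A_0}(gr_I^1 A)$, where $A_0 = gr_I^0 A = A/I$ and $gr_I^1 A = I/I^2$. By Theorem \ref{teo: gr} the object $gr_I A$ is a (strongly) graded algebra, so Proposition \ref{lem: graded mij} applies: its degree-zero injection $i_0^{gr_I A}$ is a unital algebra homomorphism out of $A_0$ by relation (\ref{form: coro grAlg1}), and $i_1^{gr_I A}$ is a morphism of $A_0$-bimodules from $gr_I^1 A$ into $gr_I A$, the latter regarded as an $A_0$-bimodule through $i_0^{gr_I A}$. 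The universal property of $T_{A_0}(gr_I^1 A)$ then produces a unique algebra homomorphism $\varphi$ with $\varphi \circ i_0^T = i_0^{gr_I A}$ and $\varphi \circ i_1^T = i_1^{gr_I A}$, uniqueness being forced by the fact that $A_0$ and $gr_I^1 A$ generate the tensor algebra.

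Next I would verify that $\varphi$ is graded with the stated components. The degree-$t$ part of $T_{A_0}(gr_I^1 A)$ is $(gr_I^1 A)^{\otimes_{A_0} t}$, and since $\varphi$ is multiplicative, its restriction there is obtained by including each tensor factor via $i_1^{gr_I A}$ and then multiplying the $t$ factors together through the iterated product $\overline{m}_{gr_I A}^{t-1}$; composing with the projection $p_t^{gr_I A}$ isolates the degree-$t$ part and reproduces exactly the formula for $\varphi_t$. Because the multiplication of $gr_I A$ carries $gr_I^a A \otimes gr_I^b A$ into $gr_I^{a+b} A$, this composite indeed lands in $gr_I^t A$, so $\varphi = \oplus_t \varphi_t$ is a graded homomorphism.

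For the listed assertions, the crucial observation is that $(a)$ is already granted: by Theorem \ref{teo: gr}, $gr_I A$ is strongly $\mathbb{N}$-graded. The equivalences themselves are formal and dual to the coalgebra case, being an instance of the general characterization in \cite{AM} (the dual of \cite[Theorem 2.22]{AM}) of the canonical map from a relative tensor algebra to a graded algebra. Schematically, $(a) \Leftrightarrow (a')$ reduces strong grading to the single multiplications $m_{a,1}^{gr_I A}$; $(a') \Leftrightarrow (b)$ comes from the factorization of $\varphi_n$ through iterated $m_{a,1}$ maps recorded in the formula for $\varphi_t$; $(b) \Leftrightarrow (c)$ is the standard fact that a graded morphism is epi iff every component is; and $(c) \Leftrightarrow (d) \Leftrightarrow (e)$ identifies the image of $\varphi$ with the subalgebra generated in positive degree and reinterprets surjectivity as the power identities.

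I expect the genuine work, and the main obstacle, to lie in the second step: confirming that the abstract homomorphism $\varphi$ supplied by the universal property actually decomposes as $\oplus_t \varphi_t$ with each $\varphi_t$ given by the iterated-multiplication formula. This requires tracking the relative tensor powers $(gr_I^1 A)^{\otimes_{A_0} t}$ against the grading of $gr_I A$ and appealing to associativity of $\overline{m}_{gr_I A}$ to render the iterated product $\overline{m}_{gr_I A}^{t-1}$ well defined and compatible with the projections. Once $\varphi_t$ is pinned down, the equivalences follow immediately from Theorem \ref{teo: gr} together with \cite{AM}, so the only substantive content beyond routine dualization is the construction and grading of $\varphi$.
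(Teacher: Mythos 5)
Your proposal is correct and follows essentially the same route as the paper, whose entire proof is to observe that $(a)$ holds by Theorem \ref{teo: gr} and then invoke \cite[Theorem 3.11]{AM}, which supplies the construction of $\varphi$, the formula for $\varphi_t$, and the chain of equivalences. The extra detail you give (universal property of $T_{A/I}(I/I^2)$, identification of $\varphi_t$ via iterated multiplication) is a reconstruction of the content of that cited theorem rather than a different argument.
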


\begin{proof}
By Theorem \ref{teo: gr}, $(a)$ holds. We conclude by applying \cite[Theorem
3.11]{AM}.
\end{proof}

\section{The Associated Graded Coalgebra of a Bialgebra With Respect to a
Subbialgebra \label{sec: ass Grad Coalg Bialg}}

\begin{lemma}
\label{lem: graded braided}Let $\left( \mathcal{M},c\right) $ be a
cocomplete abelian braided monoidal category such that the tensor product
commutes with direct sums. Consider in $\left( \mathcal{M},c\right) $ a
datum $(B,m_{B},u_{B},\Delta _{B},\varepsilon _{B})$ consisting of a graded
object $B$ with graduation defined by $\left( B_{k}\right) _{k\in \mathbb{N}%
} $ such that, with respect to this graduation,

\begin{itemize}
\item $\left( B,m_{B},u_{B}\right) $ is a graded algebra and

\item $(B,\Delta _{B},\varepsilon _{B})$ is a graded coalgebra.
\end{itemize}

Assume that for every $a,b\in
\mathbb{N}
,$%
\begin{eqnarray*}
&&\sum\limits_{s+t=a+b}\left( i_{s}\otimes i_{t}\right) \Delta _{s,t}m_{a,b}
\\
&=&\sum\limits_{s^{\prime }+t^{\prime }=a}\sum\limits_{s^{\prime \prime
}+t^{\prime \prime }=b}\left( i_{s^{\prime }+s^{\prime \prime }}\otimes
i_{t^{\prime }+t^{\prime \prime }}\right) \circ \left( m_{s^{\prime
},s^{\prime \prime }}\otimes m_{t^{\prime },t^{\prime \prime }}\right) \circ
\left( B_{s^{\prime }}\otimes c_{B_{t^{\prime }},B_{s^{\prime \prime
}}}\otimes B_{t^{\prime \prime }}\right) \circ \left( \Delta _{s^{\prime
},t^{\prime }}\otimes \Delta _{s^{\prime \prime },t^{\prime \prime }}\right)
\end{eqnarray*}%
\begin{equation*}
\varepsilon _{0}m_{0,0}=m_{\mathbf{1}}\left( \varepsilon _{0}\otimes
\varepsilon _{0}\right) .
\end{equation*}%
Then $B$ is a graded braided bialgebra in $\left( \mathcal{M},c\right) $.
\end{lemma}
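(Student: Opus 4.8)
The plan is to verify directly the two defining compatibility identities of a braided bialgebra, namely
\begin{equation*}
\Delta _{B}\circ m_{B}=\left( m_{B}\otimes m_{B}\right) \circ \left(
B\otimes c_{B,B}\otimes B\right) \circ \left( \Delta _{B}\otimes \Delta
_{B}\right) \qquad \text{and}\qquad \varepsilon _{B}\circ m_{B}=m_{\mathbf{1}
}\circ \left( \varepsilon _{B}\otimes \varepsilon _{B}\right) ,
\end{equation*}
the algebra and coalgebra axioms being already part of the datum. Since the tensor product commutes with direct sums, one has $B\otimes B=\oplus _{a,b\in \mathbb{N}}\left( B_{a}\otimes B_{b}\right) $ with canonical injections $i_{a}\otimes i_{b}$, so two morphisms out of $B\otimes B$ coincide as soon as they agree after precomposition with every $i_{a}\otimes i_{b}$. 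Thus it suffices to check both identities componentwise.

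For the first identity I would compute each side on $i_{a}\otimes i_{b}$. Using (\ref{form: coro grAlg1}) and then (\ref{form: grCoalg}) gives
\begin{equation*}
\Delta _{B}\circ m_{B}\circ \left( i_{a}\otimes i_{b}\right) =\Delta
_{B}\circ i_{a+b}\circ m_{a,b}=\sum_{s+t=a+b}\left( i_{s}\otimes
i_{t}\right) \circ \Delta _{s,t}\circ m_{a,b},
\end{equation*}
which is exactly the left-hand side of the first displayed hypothesis. For the other side I would expand $\left( \Delta _{B}\otimes \Delta _{B}\right) \circ \left( i_{a}\otimes i_{b}\right) $ by applying (\ref{form: grCoalg}) to each factor, then push the injections through $B\otimes c_{B,B}\otimes B$ using naturality of the braiding, namely $c_{B,B}\circ \left( i_{t^{\prime }}\otimes i_{s^{\prime \prime }}\right) =\left( i_{s^{\prime \prime }}\otimes i_{t^{\prime }}\right) \circ c_{B_{t^{\prime }},B_{s^{\prime \prime }}}$, and finally apply $m_{B}\otimes m_{B}$ via (\ref{form: coro grAlg1}). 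The outcome is precisely the double sum on the right-hand side of the first hypothesis. Equating the two gives the identity on each component, hence globally.

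For the counit identity I would use that, by Proposition \ref{lem: graded mij} and Proposition \ref{lem: graded Deltaij}, $u_{B}=i_{0}\circ u_{0}$ and $\varepsilon _{B}=\varepsilon _{0}\circ p_{0}$. Then $\varepsilon _{B}\circ m_{B}\circ \left( i_{a}\otimes i_{b}\right) =\varepsilon _{0}\circ p_{0}\circ i_{a+b}\circ m_{a,b}=\delta _{a+b,0}\,\varepsilon _{0}\circ m_{0,0}$, while $m_{\mathbf{1}}\circ \left( \varepsilon _{B}\otimes \varepsilon _{B}\right) \circ \left( i_{a}\otimes i_{b}\right) =\delta _{a,0}\delta _{b,0}\,m_{\mathbf{1}}\circ \left( \varepsilon _{0}\otimes \varepsilon _{0}\right) $. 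Both vanish unless $a=b=0$, where the second displayed hypothesis $\varepsilon _{0}m_{0,0}=m_{\mathbf{1}}\left( \varepsilon _{0}\otimes \varepsilon _{0}\right) $ makes them agree. The remaining normalizations, that $u_{B}$ is a morphism of coalgebras and $\varepsilon _{B}u_{B}=\mathrm{Id}_{\mathbf{1}}$, are concentrated in degree zero (again since $u_{B}=i_{0}u_{0}$ and $\varepsilon _{B}=\varepsilon _{0}p_{0}$) and are checked the same way on $B_{0}$.

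I expect the main obstacle to be the bookkeeping in the first identity: after expanding both comultiplications one must commute the graded injections past the braiding and re-index the resulting quadruple sum so that it matches the prescribed double sum $\sum_{s^{\prime }+t^{\prime }=a}\sum_{s^{\prime \prime }+t^{\prime \prime }=b}$ of the hypothesis. Lining up the braiding naturality with the correct re-indexing of the summation variables is the delicate point; once this is done, everything else is a routine assembly from the graded structure formulas (\ref{form: coro grAlg1}) and (\ref{form: grCoalg}).
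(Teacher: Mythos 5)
Your proposal is correct and takes essentially the same approach as the paper, whose entire proof of this lemma is the one-line remark that it follows easily by using (\ref{form: coro grAlg1}) and (\ref{form: grCoalg}). Your componentwise verification against the injections $i_{a}\otimes i_{b}$ (legitimate because the tensor product commutes with direct sums), using exactly those two formulas together with naturality of the braiding to re-index the double sum, is precisely the computation the paper leaves implicit.
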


\begin{proof}
It follows easily by using (\ref{form: coro grAlg1}), (\ref{form: grCoalg}).
\end{proof}

\begin{theorem}
\label{pro: Sweedler}Let $\mathcal{M}$ be an coabelian monoidal category.
Let $((X_{i})_{i\in \mathbb{N}},(\xi _{i}^{j})_{i,j\in \mathbb{N}})$ be a
direct system in $\mathcal{M}$ where, for $i\leq j$, $\xi
_{i}^{j}:X_{i}\rightarrow X_{j}$.

Let $(\xi _{i}:X_{i}\rightarrow X)_{i\in \mathbb{N}}$ be a compatible family
of morphisms with respect to the given direct system.

Assume that

\begin{itemize}
\item $\xi _{i}^{j}:X_{i}\rightarrow X_{j}$ is a split monomorphism for
every $i\leq j,$

\item $X_{0}=0,$

\item $\xi _{i}:X_{i}\rightarrow X$ is a monomorphism for every $i\in
\mathbb{N}
$
\end{itemize}

and denote by $\tau _{i}:X\rightarrow \frac{X}{X_{i}}$ the canonical
projection for every $i\in
\mathbb{N}
.$

Then, for every $n\in
\mathbb{N}
,$ the following sequence is exact.%
\begin{equation*}
\bigoplus\limits_{a+b=n+1}X_{a}\otimes X_{b}\overset{\nabla \left[ \left(
\xi _{a}\otimes \xi _{b}\right) _{a+b=n+1}\right] }{\longrightarrow }%
X\otimes X\overset{\Delta \left[ \left( \tau _{a}\otimes \tau _{b}\right)
_{a+b=n}\right] }{\longrightarrow }\bigoplus\limits_{a+b=n}\frac{X}{X_{a}}%
\otimes \frac{X}{X_{b}}.
\end{equation*}
\end{theorem}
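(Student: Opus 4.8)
The plan is to realise both maps via subobjects of $X\otimes X$ and to prove the single identity
\begin{equation*}
\sum_{a+b=n+1}X_a\otimes X_b=\bigcap_{c+d=n}\ker\left(\tau_c\otimes\tau_d\right).
\end{equation*}
Each $\xi_a\otimes\xi_b$ is a monomorphism, being the composite of $\xi_a\otimes X_b$ and $X\otimes\xi_b$, both monic because the tensor functors are left exact in a coabelian category; hence the left-hand side is exactly the image of $\nabla\left[\left(\xi_a\otimes\xi_b\right)_{a+b=n+1}\right]$, and the right-hand side is the kernel of $\Delta\left[\left(\tau_a\otimes\tau_b\right)_{a+b=n}\right]$. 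The inclusion $\subseteq$ amounts to $g\circ f=0$: given $a+b=n+1$ and $c+d=n$ one cannot have simultaneously $a>c$ and $b>d$, so $a\le c$ or $b\le d$; if $a\le c$ then $\tau_c\circ\xi_a=\tau_c\circ\xi_c\circ\xi_a^c=0$ by compatibility of the family with the direct system, and likewise $\tau_d\circ\xi_b=0$ when $b\le d$, so $(\tau_c\otimes\tau_d)\circ(\xi_a\otimes\xi_b)=(\tau_c\xi_a)\otimes(\tau_d\xi_b)$ has a vanishing tensor factor and is zero.

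For the reverse inclusion I would work in two stages, the first of which traps the kernel inside $X_n\otimes X_n$. Since $X_0=0$ gives $\tau_0=\mathrm{Id}_X$, the pairs $(0,n)$ and $(n,0)$ yield
\begin{equation*}
\bigcap_{c+d=n}\ker\left(\tau_c\otimes\tau_d\right)\subseteq\ker\left(X\otimes\tau_n\right)\cap\ker\left(\tau_n\otimes X\right)=\left(X\otimes X_n\right)\cap\left(X_n\otimes X\right),
\end{equation*}
the last two kernels being computed from left exactness. I then show this intersection equals $X_n\otimes X_n$: a subobject $W$ of it factors through $X_n\otimes X$ and, lying in $X\otimes X_n=\ker(X\otimes\tau_n)$, is annihilated by $(X\otimes\tau_n)\circ(\xi_n\otimes X)=(\xi_n\otimes X/X_n)\circ(X_n\otimes\tau_n)$; as $\xi_n\otimes X/X_n$ is monic and $\ker(X_n\otimes\tau_n)=X_n\otimes X_n$ by left exactness, $W$ factors through $X_n\otimes X_n$. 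The summands on the left also lie in $X_n\otimes X_n$, since $a+b=n+1$ with $a,b\ge1$ forces $a,b\le n$.

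Second, I compute inside $X_n\otimes X_n$ using the splitting hypothesis. Retracting the split monomorphisms $\xi_k^{k+1}$ inductively, with $X_0=0$ as base, produces compatible biproduct decompositions $X_n\cong\bigoplus_{i=1}^n Y_i$ with $Y_i:=X_i/X_{i-1}$, in which $X_c$ is the sub-biproduct $\bigoplus_{i\le c}Y_i$ and $X_n/X_c\cong\bigoplus_{i>c}Y_i$. Additivity of $\otimes$ preserves these finite biproducts, so $X_n\otimes X_n=\bigoplus_{1\le i,j\le n}Y_i\otimes Y_j$; moreover the restriction of $\tau_c\otimes\tau_d$ to $X_n\otimes X_n$ factors through the monomorphism $(X_n/X_c)\otimes(X_n/X_d)\hookrightarrow(X/X_c)\otimes(X/X_d)$, so on $X_n\otimes X_n$ its kernel is that of the biproduct projection $\overline{\tau}_c\otimes\overline{\tau}_d$, namely $\bigoplus_{i\le c\ \text{or}\ j\le d}Y_i\otimes Y_j$. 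A direct count then identifies both $\sum_{a+b=n+1}X_a\otimes X_b$ and $\bigcap_{c+d=n}\ker(\overline{\tau}_c\otimes\overline{\tau}_d)$ with $\bigoplus_{i+j\le n+1}Y_i\otimes Y_j$, which completes the proof.

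The crux, and the step I expect to be most delicate, is precisely that a coabelian category supplies only \emph{left} exactness of $\otimes$, so the convenient formula $\ker(\tau_c\otimes\tau_d)=X_c\otimes X+X\otimes X_d$, which would require right exactness, is not available. The two-stage device is designed to avoid it: left exactness together with the single monomorphism $\xi_n$ suffices to confine the kernel to $X_n\otimes X_n$, and the split hypothesis then turns the residual problem into bookkeeping with finite biproducts, for which additivity of $\otimes$ is all that is needed.
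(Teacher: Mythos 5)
Your argument is correct, and it is worth noting at the outset that the paper itself offers no internal proof of this statement: it simply invokes \cite[Theorem 3.1]{AM2} (the authors' categorical version of Sweedler's kernel-of-a-tensor-product lemma, which is precisely where the split-monomorphism hypothesis is exploited). So you have supplied a self-contained proof where the paper defers to an external reference. Your two-stage structure is well designed for the actual constraints of a coabelian monoidal category: the inclusion $\mathrm{Im}\subseteq\ker$ is the easy combinatorial observation that $a+b=n+1$ and $c+d=n$ force $a\le c$ or $b\le d$; the reverse inclusion first confines $\bigcap_{c+d=n}\ker(\tau_c\otimes\tau_d)$ to $X_n\otimes X_n$ using only left exactness of the tensor functors and the single pairs $(0,n)$, $(n,0)$ (your verification that $(X\otimes X_n)\cap(X_n\otimes X)=X_n\otimes X_n$ via the monomorphism $\xi_n\otimes X/X_n$ is exactly right), and then uses the splittings $\xi_k^{k+1}$ to decompose $X_n\otimes X_n$ as the finite biproduct $\bigoplus_{i,j}Y_i\otimes Y_j$, where both the image and the kernel are identified with $\bigoplus_{i+j\le n+1}Y_i\otimes Y_j$ by elementary index bookkeeping. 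The one step a referee would want spelled out -- that the restriction of $\tau_c\otimes\tau_d$ to $X_n\otimes X_n$ has the same kernel as $\overline{\tau}_c\otimes\overline{\tau}_d$ -- is justified correctly: $X_n/X_c\to X/X_c$ is a monomorphism because $X_c$ is the pullback of $X_c\to X\leftarrow X_n$, and left exactness then makes $(X_n/X_c)\otimes(X_n/X_d)\to(X/X_c)\otimes(X/X_d)$ monic. Your closing remark correctly identifies why the naive formula $\ker(\tau_c\otimes\tau_d)=X_c\otimes X+X\otimes X_d$ is unavailable here and why the hypotheses ($X_0=0$ and the splittings) are what make the statement true nonetheless.
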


\begin{proof}
Apply \cite[Theorem 3.1]{AM2}.
\end{proof}

\begin{notations}
In this section, from now on, the following assumptions and notations will
be used.

$\left( \mathcal{M},c\right) $ is a cocomplete abelian coabelian braided
monoidal category such that the tensor product commutes with direct sums.
\newline
Let $i_{B}^{E}:B\hookrightarrow E$ be a monomorphism in $\mathcal{M}$ which
is a braided bialgebra homomorphism in $\mathcal{M}$ and let $i_{B^{\wedge
_{E}n}}^{E}:B^{\wedge _{E}n}\rightarrow E$ and $i_{B^{\wedge
_{E}a}}^{B^{\wedge _{E}b}}:B^{\wedge _{E}a}\rightarrow B^{\wedge _{E}b}$ ($%
a\leq b$) be the canonical injections.

Assume that $i_{B^{\wedge _{E}a}}^{B^{\wedge _{E}b}}$ is a split
monomorphism in $\mathcal{M}$ for every $a\leq b$.

By Theorem \ref{pro: Sweedler}, we have the following exact sequence%
\begin{equation*}
\bigoplus\limits_{a+b=n+1}B^{\wedge _{E}a}\otimes B^{\wedge _{E}b}\overset{%
\nabla \left[ \left( i_{B^{\wedge _{E}a}}^{E}\otimes i_{B^{\wedge
_{E}b}}^{E}\right) _{a+b=n+1}\right] }{\longrightarrow }E\otimes E\overset{%
\Delta \left[ \left( p_{B^{\wedge _{E}a}}^{E}\otimes p_{B^{\wedge
_{E}b}}^{E}\right) _{a+b=n}\right] }{\longrightarrow }\bigoplus%
\limits_{a+b=n}\frac{E}{B^{\wedge _{E}a}}\otimes \frac{E}{B^{\wedge _{E}b}}.
\end{equation*}%
Let%
\begin{equation*}
\left( \sum_{a+b=n+1}B^{\wedge _{E}a}\otimes B^{\wedge _{E}b},\beta
_{n}\right) =\mathrm{Im}\left\{ \nabla \left[ \left( i_{B^{\wedge
_{E}a}}^{E}\otimes i_{B^{\wedge _{E}b}}^{E}\right) _{a+b=n+1}\right] \right\}
\end{equation*}%
and let
\begin{equation*}
\gamma _{n}:\bigoplus\limits_{a+b=n+1}B^{\wedge _{E}a}\otimes B^{\wedge
_{E}b}\twoheadrightarrow \sum_{a+b=n+1}B^{\wedge _{E}a}\otimes B^{\wedge
_{E}b}
\end{equation*}%
be the unique morphism such that
\begin{equation}
\beta _{n}\circ \gamma _{n}=\nabla \left[ \left( i_{B^{\wedge
_{E}a}}^{E}\otimes i_{B^{\wedge _{E}b}}^{E}\right) _{a+b=n+1}\right] .
\label{form: def gamma}
\end{equation}%
Since $\left( B^{\wedge _{E}a+b},i_{B^{\wedge _{E}a+b}}^{E}\right) =\ker
\left( \left( p_{B^{\wedge _{E}a}}^{E}\otimes p_{B^{\wedge
_{E}b}}^{E}\right) \Delta _{E}\right) ,$ we have $\left( p_{B^{\wedge
_{E}a}}^{E}\otimes p_{B^{\wedge _{E}b}}^{E}\right) \circ \Delta _{E}\circ
i_{B^{\wedge _{E}a+b}}^{E}=0$ so that%
\begin{equation*}
\Delta \left[ \left( p_{B^{\wedge _{E}a}}^{E}\otimes p_{B^{\wedge
_{E}b}}^{E}\right) _{a+b=n}\right] \circ \Delta _{E}\circ i_{B^{\wedge
_{E}a+b}}^{E}=0
\end{equation*}%
and hence, by the exactness of the sequence above, there exists a unique
morphism
\begin{equation*}
\alpha _{n}:B^{\wedge _{E}n}\rightarrow \sum_{a+b=n+1}B^{\wedge
_{E}a}\otimes B^{\wedge _{E}b}
\end{equation*}%
such that%
\begin{equation}
\Delta _{E}\circ i_{B^{\wedge _{E}n}}^{E}=\beta _{n}\circ \alpha _{n},\text{
for every }n\in
\mathbb{N}
\text{.}  \label{form: def alpha}
\end{equation}
\end{notations}

\begin{lemma}
\label{lem: mLambda}0) For every $s,t,u,v\in
\mathbb{N}
,$ we have
\begin{eqnarray}
&&\left( p_{B^{\wedge _{E}s}}^{E}\otimes p_{B^{\wedge _{E}t}}^{E}\right)
\circ \left( m_{E}\otimes m_{E}\right) \circ \left( E\otimes c\otimes
E\right) \circ \left( \beta _{u}\otimes \beta _{v}\right) \circ \left(
\gamma _{u}\otimes \gamma _{v}\right)  \label{form: Sweed5,5} \\
&=&\nabla \left[ \left(
\begin{array}{c}
\left( \left[ p_{B^{\wedge _{E}s}}^{E}m_{E}\left( i_{B^{\wedge
_{E}a}}^{E}\otimes i_{B^{\wedge _{E}b}}^{E}\right) \right] \otimes \left[
p_{B^{\wedge _{E}t}}^{E}m_{E}\left( i_{B^{\wedge _{E}c}}^{E}\otimes
i_{B^{\wedge _{E}d}}^{E}\right) \right] \right) \\
\circ \left( B^{\wedge _{E}a}\otimes c_{B^{\wedge _{E}c},B^{\wedge
_{E}b}}\otimes B^{\wedge _{E}d}\right)%
\end{array}%
\right) _{\substack{ a+c=u+1  \\ b+d=v+1}}\right]  \notag
\end{eqnarray}

1) The following relations hold.%
\begin{equation}
p_{B^{\wedge _{E}u+v-1}}^{E}m_{E}\left( i_{B^{\wedge _{E}u}}^{E}\otimes
i_{B^{\wedge _{E}v}}^{E}\right) =0\text{ for every }u,v\in
\mathbb{N}
,u+v\geq 1.  \label{form: Sweed6,5}
\end{equation}%
2) For every $a,b\in
\mathbb{N}
$, there exists a unique morphism $m_{\wedge }^{a,b}:B^{\wedge
_{E}a+1}\otimes B^{\wedge _{E}b+1}\rightarrow B^{\wedge _{E}a+b+1}$ such
that
\begin{equation}
i_{B^{\wedge _{E}a+b+1}}^{E}\circ m_{\wedge }^{a,b}=m_{E}\left( i_{B^{\wedge
_{E}a+1}}^{E}\otimes i_{B^{\wedge _{E}b+1}}^{E}\right) ,\text{ for every }%
a,b\in
\mathbb{N}
.  \label{form: Sweed7,5}
\end{equation}%
3) For every $a,b,c,d\in
\mathbb{N}
$, we have%
\begin{gather*}
m_{\wedge }^{a+b,c}\circ \left( m_{\wedge }^{a,b}\otimes B^{\wedge
_{E}c+1}\right) =m_{\wedge }^{a,b+c}\circ \left( B^{\wedge _{E}a+1}\otimes
m_{\wedge }^{b,c}\right) \text{,} \\
m_{\wedge }^{d,0}\circ \left( B^{\wedge _{E}d+1}\otimes u_{B}\right)
=r_{B^{\wedge _{E}d+1}},\qquad m_{\wedge }^{0,d}\circ \left( u_{B}\otimes
B^{\wedge _{E}d+1}\right) =l_{B^{\wedge _{E}d+1}}\text{,} \\
m_{\wedge }^{a+1,b}\circ \left( i_{B^{\wedge _{E}a+1}}^{B^{\wedge
_{E}a+2}}\otimes B^{\wedge _{E}b+1}\right) =i_{B^{\wedge
_{E}a+b+1}}^{B^{\wedge _{E}a+b+2}}\circ m_{\wedge }^{a,b},\qquad m_{\wedge
}^{a,b+1}\circ \left( B^{\wedge _{E}a+1}\otimes i_{B^{\wedge
_{E}b+1}}^{B^{\wedge _{E}b+2}}\right) =i_{B^{\wedge _{E}a+b+1}}^{B^{\wedge
_{E}a+b+2}}\circ m_{\wedge }^{a,b}
\end{gather*}
\end{lemma}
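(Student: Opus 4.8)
The four parts are naturally proved in the stated order, each feeding the next: $0)$ is a purely formal identity, $1)$ is the substantial point and rests on $0)$, $2)$ is immediate from $1)$, and $3)$ follows from $2)$ by cancelling monomorphisms. For Part $0)$ I would begin from $\beta _{u}\gamma _{u}=\nabla \lbrack (i_{B^{\wedge _{E}a}}^{E}\otimes i_{B^{\wedge _{E}b}}^{E})_{a+b=u+1}]$, which is (\ref{form: def gamma}), so that $(\beta _{u}\otimes \beta _{v})(\gamma _{u}\otimes \gamma _{v})$ is a tensor product of two codiagonals. Since $\otimes $ commutes with direct sums, this tensor product is the codiagonal of the family $(i_{B^{\wedge _{E}a}}^{E}\otimes i_{B^{\wedge _{E}b}}^{E}\otimes i_{B^{\wedge _{E}c}}^{E}\otimes i_{B^{\wedge _{E}d}}^{E})$ indexed by $a+b=u+1,\ c+d=v+1$. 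Composition on the left with $(p_{B^{\wedge _{E}s}}^{E}\otimes p_{B^{\wedge _{E}t}}^{E})(m_{E}\otimes m_{E})(E\otimes c\otimes E)$ distributes over the codiagonal, and the only nonformal step is to push the two inner injections through the braiding: by naturality of $c$ one has $c\circ (i_{B^{\wedge _{E}b}}^{E}\otimes i_{B^{\wedge _{E}c}}^{E})=(i_{B^{\wedge _{E}c}}^{E}\otimes i_{B^{\wedge _{E}b}}^{E})\circ c_{B^{\wedge _{E}b},B^{\wedge _{E}c}}$. After relabelling the two inner summation indices this is exactly the asserted right-hand side.

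Part $1)$ is the heart of the lemma, and I would argue by induction on $u+v$. The cases $u+v\le 2$ are direct: if $u=0$ or $v=0$ the map vanishes because $B^{\wedge _{E}0}=0$, while for $(u,v)=(1,1)$ one has $p_{B}^{E}m_{E}(i_{B}^{E}\otimes i_{B}^{E})=p_{B}^{E}i_{B}^{E}m_{B}=0$, since $i_{B}^{E}$ is an algebra homomorphism and $p_{B}^{E}i_{B}^{E}=0$. For the step, fix $u+v\ge 3$ and choose $s,t\ge 1$ with $s+t=u+v-1$. Using $(B^{\wedge _{E}u+v-1},i_{B^{\wedge _{E}u+v-1}}^{E})=\ker ((p_{B^{\wedge _{E}s}}^{E}\otimes p_{B^{\wedge _{E}t}}^{E})\Delta _{E})$ and that the induced map $\overline{\Delta }_{B^{\wedge _{E}s},B^{\wedge _{E}t}}$ on the cokernel is a monomorphism, the desired vanishing is equivalent to $(p_{B^{\wedge _{E}s}}^{E}\otimes p_{B^{\wedge _{E}t}}^{E})\Delta _{E}m_{E}(i_{B^{\wedge _{E}u}}^{E}\otimes i_{B^{\wedge _{E}v}}^{E})=0$. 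Expanding $\Delta _{E}m_{E}$ by the bialgebra axiom of $E$, writing $\Delta _{E}i_{B^{\wedge _{E}u}}^{E}=\beta _{u}\alpha _{u}$ via (\ref{form: def alpha}), and using that $\gamma _{u}\otimes \gamma _{v}$ is an epimorphism (a tensor product of epimorphisms), it suffices to prove $(p_{B^{\wedge _{E}s}}^{E}\otimes p_{B^{\wedge _{E}t}}^{E})(m_{E}\otimes m_{E})(E\otimes c\otimes E)(\beta _{u}\otimes \beta _{v})(\gamma _{u}\otimes \gamma _{v})=0$. By Part $0)$ this is a codiagonal, hence vanishes as soon as each of its components does.

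The combinatorial core is then as follows. A component indexed by $(a,b,c,d)$, with $a+c=u+1$ and $b+d=v+1$, has first tensor factor $p_{B^{\wedge _{E}s}}^{E}m_{E}(i_{B^{\wedge _{E}a}}^{E}\otimes i_{B^{\wedge _{E}b}}^{E})$ and second factor $p_{B^{\wedge _{E}t}}^{E}m_{E}(i_{B^{\wedge _{E}c}}^{E}\otimes i_{B^{\wedge _{E}d}}^{E})$. The first factor vanishes if $a=0$ or $b=0$, or else (when $a,b\ge 1$) whenever $s\ge a+b-1$, because then $B^{\wedge _{E}a+b-1}\subseteq B^{\wedge _{E}s}$ and the inductive hypothesis gives $p_{B^{\wedge _{E}a+b-1}}^{E}m_{E}(i_{B^{\wedge _{E}a}}^{E}\otimes i_{B^{\wedge _{E}b}}^{E})=0$; symmetrically for the second factor with $t\ge c+d-1$. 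Since $(a+b)+(c+d)=u+v+2$ while $s+t=u+v-1$, one cannot have both $s<a+b-1$ and $t<c+d-1$, for that would force $s+t\le u+v-2$. Thus at least one index bound holds; and because $s,t\le u+v-2$, that bound forces the corresponding pair to have total degree $<u+v$, so the inductive hypothesis genuinely applies and the component dies. This completes the step.

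Part $2)$ is just Part $1)$ for $(u,v)=(a+1,b+1)$: the identity $p_{B^{\wedge _{E}a+b+1}}^{E}m_{E}(i_{B^{\wedge _{E}a+1}}^{E}\otimes i_{B^{\wedge _{E}b+1}}^{E})=0$ together with $(B^{\wedge _{E}a+b+1},i_{B^{\wedge _{E}a+b+1}}^{E})=\ker (p_{B^{\wedge _{E}a+b+1}}^{E})$ yields, by the universal property of the kernel, the unique $m_{\wedge }^{a,b}$ of (\ref{form: Sweed7,5}), uniqueness coming from $i_{B^{\wedge _{E}a+b+1}}^{E}$ being a monomorphism. Finally each relation in $3)$ is checked by postcomposing with the appropriate monomorphism $i_{B^{\wedge _{E}\bullet }}^{E}$ and repeatedly substituting (\ref{form: Sweed7,5}); this reduces the three identities to, respectively, the associativity and the unit axioms of $m_{E}$ and the compatibility $i_{B^{\wedge _{E}a+b+1}}^{E}\circ i_{B^{\wedge _{E}a+b}}^{B^{\wedge _{E}a+b+1}}=i_{B^{\wedge _{E}a+b}}^{E}$, after which the monomorphism is cancelled. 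I expect the main obstacle to be the inductive step of Part $1)$: packaging the reduction through the cokernel and the epimorphism $\gamma _{u}\otimes \gamma _{v}$, and then running the index bookkeeping that guarantees that in every summand produced by Part $0)$ one tensor factor is a product of wedge powers of total degree small enough for the induction to bite.
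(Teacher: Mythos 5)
Your proof follows the same route as the paper's: part 0) from (\ref{form: def gamma}) and naturality of $c$; part 1) by induction on $u+v$, reducing the claim via the identification $B^{\wedge _{E}u+v-1}=\ker \left( \left( p_{B^{\wedge _{E}s}}^{E}\otimes p_{B^{\wedge _{E}t}}^{E}\right) \Delta _{E}\right) $ and the monomorphism $\overline{\Delta }_{B^{\wedge _{E}s},B^{\wedge _{E}t}}$ to the vanishing of each component of the codiagonal of part 0); parts 2) and 3) by the universal property of the kernel and cancellation of the monomorphisms $i_{B^{\wedge _{E}\bullet }}^{E}$. The one substantive difference lies in the inductive step of 1): the paper fixes $\left( s,t\right) =\left( u-1,v\right) $, and its bookkeeping (the step asserting $c+d<v\leq u+v$, which does not follow from $v\geq c+d-1$) breaks down when $u=1$, where the component $\left( a,b,c,d\right) =\left( 1,1,1,v\right) $ reduces the claim to itself; your choice of $s,t\geq 1$ with $s+t=u+v-1$ forces $s,t\leq u+v-2$, hence $a+b\leq s+1<u+v$ (or $c+d\leq t+1<u+v$) in every surviving component, and together with your explicit base case $\left( u,v\right) =\left( 1,1\right) $ (settled by $m_{E}\left( i_{B}^{E}\otimes i_{B}^{E}\right) =i_{B}^{E}m_{B}$) this closes exactly that gap. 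So your argument is not merely correct: it is a cleaner and more watertight version of the argument the paper itself intends.
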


\begin{proof}
See the Appendix.
\end{proof}

\begin{proposition}
\label{pro: grE alg}$\oplus _{n\in \mathbb{N}}B^{\wedge _{E}n+1}$ is a
graded algebra and there are unique algebra structure on $gr_{B}E=\oplus
_{n\in \mathbb{N}}gr_{B}^{n}E$ such that
\begin{equation*}
\oplus _{n\in \mathbb{N}}p_{B^{\wedge _{E}n}}^{B^{\wedge _{E}n+1}}:\oplus
_{n\in \mathbb{N}}B^{\wedge _{E}n+1}\rightarrow gr_{B}E
\end{equation*}%
is an algebra homomorphism. Moreover

\begin{enumerate}
\item[1)] $gr_{B}E=\oplus _{n\in \mathbb{N}}gr_{B}^{n}E$ is a graded algebra
such that $\oplus _{n\in \mathbb{N}}p_{B^{\wedge _{E}n}}^{B^{\wedge
_{E}n+1}} $ is a graded homomorphism;

\item[2)]
\begin{equation}
p_{B^{\wedge _{E}a+b}}^{B^{\wedge _{E}a+b+1}}\circ m_{\wedge
}^{a,b}=m_{a,b}^{gr_{B}E}\circ \left( p_{B^{\wedge _{E}a}}^{B^{\wedge
_{E}a+1}}\otimes p_{B^{\wedge _{E}b}}^{B^{\wedge _{E}b+1}}\right) .
\label{form: Sweed8}
\end{equation}

\item[3)] $u_{gr_{B}E}=i_{0}^{gr_{B}E}\circ p_{B^{\wedge _{E}0}}^{B}\circ
u_{B}$
\end{enumerate}
\end{proposition}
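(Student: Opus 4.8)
The plan is to deduce the statement directly from Lemma \ref{lem: GrDirect}, applied to the wedge filtration shifted by one. I would set $A_{n}:=B^{\wedge _{E}n+1}$ for every $n\in \mathbb{N}$, with connecting maps the canonical injections $i_{A_{a}}^{A_{b}}:=i_{B^{\wedge _{E}a+1}}^{B^{\wedge _{E}b+1}}$ for $a\leq b$; these are monomorphisms in $\mathcal{M}$, so $((A_{a})_{a\in \mathbb{N}},(i_{A_{a}}^{A_{b}}))$ is a direct system of the required type. Under this shift one has $A_{-1}=B^{\wedge _{E}0}=0$, which matches the convention $A_{-1}:=0$ of Lemma \ref{lem: GrDirect}, and $\frac{A_{n}}{A_{n-1}}=\frac{B^{\wedge _{E}n+1}}{B^{\wedge _{E}n}}=gr_{B}^{n}E$, so that $\oplus _{n\in \mathbb{N}}\frac{A_{n}}{A_{n-1}}=gr_{B}E$ and $p_{A_{n-1}}^{A_{n}}=p_{B^{\wedge _{E}n}}^{B^{\wedge _{E}n+1}}$.

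For the multiplicative datum I would take $m_{a,b}^{A}:=m_{\wedge }^{a,b}$, the morphisms constructed in part 2) of Lemma \ref{lem: mLambda}, together with $u_{0}^{A}:=u_{B}:\mathbf{1}\rightarrow A_{0}=B$. Checking the hypotheses of Lemma \ref{lem: GrDirect} is then immediate: after the substitution $A_{n}=B^{\wedge _{E}n+1}$, the three families of identities collected in part 3) of Lemma \ref{lem: mLambda} become exactly the associativity relation (\ref{form: locMulti}), the unit relations (\ref{form: locUnit}), and the compatibility relations (\ref{form: locComp alg}) with the connecting injections.

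Lemma \ref{lem: GrDirect} then gives at once that $\oplus _{n\in \mathbb{N}}B^{\wedge _{E}n+1}$ is a graded algebra, that there is a unique algebra structure on $gr_{B}E$ making $\oplus _{n\in \mathbb{N}}p_{B^{\wedge _{E}n}}^{B^{\wedge _{E}n+1}}$ an algebra homomorphism, and that the three numbered assertions hold. Translating its conclusion 2) through the substitution yields $p_{B^{\wedge _{E}a+b}}^{B^{\wedge _{E}a+b+1}}\circ m_{\wedge }^{a,b}=m_{a,b}^{gr_{B}E}\circ (p_{B^{\wedge _{E}a}}^{B^{\wedge _{E}a+1}}\otimes p_{B^{\wedge _{E}b}}^{B^{\wedge _{E}b+1}})$, which is (\ref{form: Sweed8}); and its conclusion 3) reads $u_{gr_{B}E}=i_{0}^{gr_{B}E}\circ p_{A_{-1}}^{A_{0}}\circ u_{0}^{A}=i_{0}^{gr_{B}E}\circ p_{B^{\wedge _{E}0}}^{B}\circ u_{B}$, using $A_{-1}=0$ and $A_{0}=B$.

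I do not expect a real obstacle, since the genuine technical content---the relations in part 3) of Lemma \ref{lem: mLambda}, which rely on the exact sequence of Theorem \ref{pro: Sweedler} and on the splitting assumption on the $i_{B^{\wedge _{E}a}}^{B^{\wedge _{E}b}}$---is already established in the Appendix. The only point demanding attention is index bookkeeping: one must use the shift $A_{n}=B^{\wedge _{E}n+1}$ (not $A_{n}=B^{\wedge _{E}n}$), so that $\frac{A_{n}}{A_{n-1}}$ reproduces the graded pieces $gr_{B}^{n}E$ and so that $A_{-1}$ collapses to $0$.
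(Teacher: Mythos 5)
Your proposal is correct and is essentially identical to the paper's own (one-line) proof: the paper also invokes Lemma \ref{lem: mLambda} to verify the hypotheses of Lemma \ref{lem: GrDirect} for the family $(B^{\wedge _{E}n+1})_{n\in \mathbb{N}}$. Your careful bookkeeping of the index shift $A_{n}=B^{\wedge _{E}n+1}$ and the identification of the data $m_{a,b}^{A}=m_{\wedge }^{a,b}$, $u_{0}^{A}=u_{B}$ simply makes explicit what the paper leaves implicit.
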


\begin{proof}
By Lemma \ref{lem: mLambda}, we can apply Lemma \ref{lem: GrDirect} to the
family $(B^{\wedge _{E}n+1})_{n\in
\mathbb{N}
}.$
\end{proof}

\begin{theorem}
\label{teo: gr_BE Bialg}Let $\left( \mathcal{M},c\right) $ be a cocomplete
and complete abelian coabelian braided monoidal category satisfying $AB5$.
Assume that the tensor product commutes with direct sums. Let $%
i_{B}^{E}:B\hookrightarrow E$ be a monomorphism in $\mathcal{M}$ which is a
braided bialgebra homomorphism in $\mathcal{M}$ and let $i_{B^{\wedge
_{E}n}}^{E}:B^{\wedge _{E}n}\rightarrow E$ and $i_{B^{\wedge
_{E}a}}^{B^{\wedge _{E}b}}:B^{\wedge _{E}a}\rightarrow B^{\wedge _{E}b}$ ($%
a\leq b$) be the canonical injections. Assume that $i_{B^{\wedge
_{E}a}}^{B^{\wedge _{E}b}}$ is a split monomorphism in $\mathcal{M}$ for
every $a\leq b$.\newline
Then $gr_{B}E$ is a graded braided bialgebra in $\left( \mathcal{M},c\right)
$.
\end{theorem}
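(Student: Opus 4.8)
The plan is to equip the graded object $gr_{B}E=\oplus_{n\in\mathbb{N}}gr_{B}^{n}E$ with both a graded algebra and a graded coalgebra structure and then apply Lemma \ref{lem: graded braided}. The algebra structure is the one produced in Proposition \ref{pro: grE alg}, with local multiplications $m_{a,b}^{gr_{B}E}$ characterised by (\ref{form: Sweed8}). For the coalgebra structure I would observe that, since $i_{B}^{E}$ is in particular a monomorphism of coalgebras, Theorem \ref{teo: gr dual} applied to the subcoalgebra $(B,i_{B}^{E})$ of $E$ makes the same graded object $gr_{B}E$ into a graded coalgebra sitting inside $E_{C}=\oplus_{n}\frac{E}{B^{\wedge_{E}n}}$ (the notation of that theorem, with $C=B$), with local comultiplications $\Delta_{a,b}^{gr_{B}E}$ characterised by (\ref{form: Delta grCo}). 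With both structures living on one graduation, it remains only to verify the two compatibility identities in the hypotheses of Lemma \ref{lem: graded braided}.

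The counit/unit identity $\varepsilon_{0}m_{0,0}=m_{\mathbf{1}}(\varepsilon_{0}\otimes\varepsilon_{0})$ is immediate. In degree zero one has $gr_{B}^{0}E=B^{\wedge_{E}1}/B^{\wedge_{E}0}=B$, and under this identification $m_{0,0}$ and $\varepsilon_{0}$ become $m_{B}$ and $\varepsilon_{B}$ (use (\ref{form: Sweed7,5}) and Theorem \ref{teo: gr dual}(3) together with the fact that $i_{B}^{E}$ is a bialgebra homomorphism). Hence the identity is exactly the multiplicativity of $\varepsilon_{B}$, which holds because $B$ is a braided bialgebra.

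The substance of the proof is the first identity of Lemma \ref{lem: graded braided}, whose two sides are morphisms $gr_{B}^{a}E\otimes gr_{B}^{b}E\to gr_{B}E\otimes gr_{B}E$. The strategy is to reduce the equality to the braided bialgebra axiom of $E$, namely $\Delta_{E}\circ m_{E}=(m_{E}\otimes m_{E})\circ(E\otimes c_{E,E}\otimes E)\circ(\Delta_{E}\otimes\Delta_{E})$. Since in the present setting the tensor functors are exact (being both left and right exact), I may precompose with the epimorphism $p_{B^{\wedge_{E}a}}^{B^{\wedge_{E}a+1}}\otimes p_{B^{\wedge_{E}b}}^{B^{\wedge_{E}b+1}}$ issuing from $B^{\wedge_{E}a+1}\otimes B^{\wedge_{E}b+1}$ and postcompose with the monomorphism $gr_{B}E\otimes gr_{B}E\hookrightarrow E_{C}\otimes E_{C}$ induced by $\oplus_{n}\frac{i_{B^{\wedge_{E}n+1}}^{E}}{B^{\wedge_{E}n}}$; it then suffices to prove the equality after these two reductions. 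Using (\ref{form: Sweed8}) and (\ref{form: Sweed7,5}) the multiplication is thereby expressed through $m_{E}$ on the wedge inclusions, while (\ref{form: Delta grCo}) rewrites each local comultiplication through $\overline{\Delta}_{B^{\wedge_{E}s},B^{\wedge_{E}t}}$, hence through $\Delta_{E}$ followed by the canonical projections. After cancelling the outer monomorphism, both sides become morphisms $B^{\wedge_{E}a+1}\otimes B^{\wedge_{E}b+1}\to E_{C}\otimes E_{C}$ written solely in terms of $m_{E}$, $\Delta_{E}$, $c$ and the structural maps, and the bialgebra axiom of $E$ yields the desired equality.

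The main obstacle is the index bookkeeping in this last reduction: one must match the quadruple sum $\sum_{s'+t'=a}\sum_{s''+t''=b}$ appearing in Lemma \ref{lem: graded braided} against the decompositions $a+c=u+1$, $b+d=v+1$ that govern how the projected multiplication interacts with the image factorizations $\beta_{n},\gamma_{n}$. This is precisely the content of Lemma \ref{lem: mLambda}(0): I would substitute the identity (\ref{form: Sweed5,5}) into the $E$-bialgebra relation and then strip off the monomorphisms to land on the required formula. This computation is routine but lengthy, so I would relegate it to the Appendix.
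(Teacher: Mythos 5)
Your proposal is correct and follows essentially the same route as the paper: algebra structure from Proposition \ref{pro: grE alg}, coalgebra structure from Theorem \ref{teo: gr dual}, reduction via Lemma \ref{lem: graded braided}, then pre/post-composition with the canonical epimorphism and monomorphism to transport the compatibility identity down to the braided bialgebra axiom of $E$, with the index matching handled by (\ref{form: Sweed5,5}). The paper's only extra ingredient is the auxiliary map $\theta_{a,b}$ and identities (\ref{form: def theta}), (\ref{form: teta2}) from the Appendix, which make precise the ``routine but lengthy'' bookkeeping you defer.
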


\begin{proof}
Set $i_{n}:=i_{B^{\wedge _{E}a}}^{E}$, $p_{n}:=p_{B^{\wedge _{E}n}}^{E}$ and
$p_{n}^{n+1}:=p_{B^{\wedge _{E}n}}^{B^{\wedge _{E}n+1}}.$

By Theorem \ref{teo: gr dual}, $\left( gr_{B}E,\Delta _{gr_{B}E},\varepsilon
_{gr_{B}E}=\varepsilon _{C}\circ \sigma _{C}^{0}\circ p_{0}^{gr_{C}E}\right)
$ is a strongly $%
\mathbb{N}
$-graded coalgebra.

By Proposition \ref{pro: grE alg}, $\left(
gr_{B}E,m_{gr_{B}E},u_{gr_{B}E}=i_{0}^{gr_{B}E}\circ p_{0}^{1}\circ
u_{B}\right) $ is a graded algebra with the same graduation defined by $%
\left( gr_{B}^{k}E\right) _{k\in \mathbb{N}}$.

By Lemma \ref{lem: graded braided}, in order to get that $gr_{B}E$ is a
graded braided bialgebra we have to prove that%
\begin{eqnarray*}
&&\sum\limits_{s+t=a+b}\left( i_{s}^{gr_{B}E}\otimes i_{t}^{gr_{B}E}\right)
\circ \Delta _{s,t}^{gr_{B}E}\circ m_{a,b}^{gr_{B}E} \\
&=&\sum\limits_{s^{\prime }+t^{\prime }=a}\sum\limits_{s^{\prime \prime
}+t^{\prime \prime }=b}\left[
\begin{array}{c}
\left( i_{s^{\prime }+s^{\prime \prime }}^{gr_{B}E}\otimes i_{t^{\prime
}+t^{\prime \prime }}^{gr_{B}E}\right) \circ \left( m_{s^{\prime },s^{\prime
\prime }}^{gr_{B}E}\otimes m_{t^{\prime },t^{\prime \prime
}}^{gr_{B}E}\right) \circ \\
\circ \left( \frac{B^{\wedge _{E}s^{\prime }+1}}{B^{\wedge _{E}s^{\prime }}}%
\otimes c_{\frac{B^{\wedge _{E}t^{\prime }+1}}{B^{\wedge _{E}t^{\prime }}},%
\frac{B^{\wedge _{E}s^{\prime \prime }+1}}{B^{\wedge _{E}s^{\prime \prime }}}%
}\otimes \frac{B^{\wedge _{E}t^{\prime \prime }+1}}{B^{\wedge _{E}t^{\prime
\prime }}}\right) \circ \left( \Delta _{s^{\prime },t^{\prime
}}^{gr_{B}E}\otimes \Delta _{s^{\prime \prime },t^{\prime \prime
}}^{gr_{B}E}\right)%
\end{array}%
\right] ,
\end{eqnarray*}%
for every $a,b\in
\mathbb{N}
.$ Denote by%
\begin{equation*}
j_{z}:\frac{E}{B^{\wedge _{E}z}}\rightarrow \oplus _{i\in
\mathbb{N}
}\frac{E}{B^{\wedge _{E}i}}
\end{equation*}%
the canonical injection. Since%
\begin{equation*}
\left( \oplus _{w\in
\mathbb{N}
}\frac{i_{w+1}}{B^{\wedge _{E}w}}\right) \circ i_{s}^{gr_{B}E}=j_{s}\circ
\frac{i_{s+1}}{B^{\wedge _{E}s}}\text{ for every }s\in
\mathbb{N}%
\end{equation*}%
and $\oplus _{w\in
\mathbb{N}
}\frac{i_{w+1}}{B^{\wedge _{E}w}}$ is a monomorphism (our category is an $%
AB4 $ category \cite[page 53]{Po}) and $p_{a}^{a+1}\otimes p_{b}^{b+1}$ is
an epimorphism, the equality we have to prove is equivalent to
\begin{eqnarray*}
&&\sum\limits_{s+t=a+b}\left( j_{s}\otimes j_{t}\right) \circ \left( \frac{%
i_{s+1}}{B^{\wedge _{E}s}}\otimes \frac{i_{t+1}}{B^{\wedge _{E}t}}\right)
\circ \Delta _{s,t}^{gr_{B}E}\circ m_{a,b}^{gr_{B}E}\circ \left(
p_{a}^{a+1}\otimes p_{b}^{b+1}\right) \\
&=&\sum\limits_{s^{\prime }+t^{\prime }=a}\sum\limits_{s^{\prime \prime
}+t^{\prime \prime }=b}\left[
\begin{array}{c}
\left( j_{s^{\prime }+s^{\prime \prime }}\otimes j_{t^{\prime }+t^{\prime
\prime }}\right) \circ \left( \frac{i_{s^{\prime }+s^{\prime \prime }+1}}{%
B^{\wedge _{E}s^{\prime }+s^{\prime \prime }}}\otimes \frac{i_{t^{\prime
}+t^{\prime \prime }+1}}{B^{\wedge _{E}t^{\prime }+t^{\prime \prime }}}%
\right) \circ \left( m_{s^{\prime },s^{\prime \prime }}^{gr_{B}E}\otimes
m_{t^{\prime },t^{\prime \prime }}^{gr_{B}E}\right) \circ \\
\circ \left( \frac{B^{\wedge _{E}s^{\prime }+1}}{B^{\wedge _{E}s^{\prime }}}%
\otimes c_{\frac{B^{\wedge _{E}t^{\prime }+1}}{B^{\wedge _{E}t^{\prime }}},%
\frac{B^{\wedge _{E}s^{\prime \prime }+1}}{B^{\wedge _{E}s^{\prime \prime }}}%
}\otimes \frac{B^{\wedge _{E}t^{\prime \prime }+1}}{B^{\wedge _{E}t^{\prime
\prime }}}\right) \\
\circ \left( \Delta _{s^{\prime },t^{\prime }}^{gr_{B}E}\otimes \Delta
_{s^{\prime \prime },t^{\prime \prime }}^{gr_{B}E}\right) \circ \left(
p_{a}^{a+1}\otimes p_{b}^{b+1}\right)%
\end{array}%
\right]
\end{eqnarray*}

By using (\ref{form: Sweed8}), (\ref{form: Delta grCo}), (\ref{form:
Sweed7,5}), the compatibility between $\Delta _{E}$ and $m_{E},$ and (\ref%
{form: def alpha}), the first term rewrites as%
\begin{equation*}
\sum\limits_{s+t=a+b}\left( j_{s}\otimes j_{t}\right) \circ \left(
p_{s}\otimes p_{t}\right) \circ \left( m_{E}\otimes m_{E}\right) \circ
\left( E\otimes c_{E,E}\otimes E\right) \circ \left( \beta _{a+1}\otimes
\beta _{b+1}\right) \circ \left( \alpha _{a+1}\otimes \alpha _{b+1}\right)
\end{equation*}

On the other hand, in view of (\ref{form: def theta}), the second term
rewrites as $\Xi \circ \left( \alpha _{a+1}\otimes \alpha _{b+1}\right) $
where%
\begin{equation*}
\Xi =\sum\limits_{\substack{ s^{\prime }+t^{\prime }=a  \\ s^{\prime \prime
}+t^{\prime \prime }=b}}\left[
\begin{array}{c}
\left( j_{s^{\prime }+s^{\prime \prime }}\otimes j_{t^{\prime }+t^{\prime
\prime }}\right) \circ \left( \frac{i_{s^{\prime }+s^{\prime \prime }+1}}{%
B^{\wedge _{E}s^{\prime }+s^{\prime \prime }}}\otimes \frac{i_{t^{\prime
}+t^{\prime \prime }+1}}{B^{\wedge _{E}t^{\prime }+t^{\prime \prime }}}%
\right) \circ \left( m_{s^{\prime },s^{\prime \prime }}^{gr_{B}E}\otimes
m_{t^{\prime },t^{\prime \prime }}^{gr_{B}E}\right) \circ \\
\circ \left( \frac{B^{\wedge _{E}s^{\prime }+1}}{B^{\wedge _{E}s^{\prime }}}%
\otimes c_{\frac{B^{\wedge _{E}t^{\prime }+1}}{B^{\wedge _{E}t^{\prime }}},%
\frac{B^{\wedge _{E}s^{\prime \prime }+1}}{B^{\wedge _{E}s^{\prime \prime }}}%
}\otimes \frac{B^{\wedge _{E}t^{\prime \prime }+1}}{B^{\wedge _{E}t^{\prime
\prime }}}\right) \circ \left( \theta _{s^{\prime },t^{\prime }}\otimes
\theta _{s^{\prime \prime },t^{\prime \prime }+1}\right)%
\end{array}%
\right] .
\end{equation*}

We will prove that%
\begin{equation*}
\Xi =\sum\limits_{s+t=a+b}\left( j_{s}\otimes j_{t}\right) \circ \left(
p_{s}\otimes p_{t}\right) \circ \left( m_{E}\otimes m_{E}\right) \circ
\left( E\otimes c_{E,E}\otimes E\right) \circ \left( \beta _{a+1}\otimes
\beta _{b+1}\right)
\end{equation*}

Since $\gamma _{a+1}\otimes \gamma _{b+1}$ is an epimorphism, equivalently
we will prove that%
\begin{eqnarray*}
&&\Xi \circ \left( \gamma _{a+1}\otimes \gamma _{b+1}\right) \\
&=&\sum\limits_{s+t=a+b}\left( j_{s}\otimes j_{t}\right) \circ \left(
p_{s}\otimes p_{t}\right) \circ \left( m_{E}\otimes m_{E}\right) \circ
\left( E\otimes c_{E,E}\otimes E\right) \circ \left( \beta _{a+1}\otimes
\beta _{b+1}\right) \circ \left( \gamma _{a+1}\otimes \gamma _{b+1}\right) .
\end{eqnarray*}

This is achieved by using (\ref{form: teta2}), (\ref{form: Sweed8}), (\ref%
{form: Sweed7,5}), naturality of braiding and (\ref{form: Sweed5,5}).

In view of Lemma \ref{lem: graded braided}, it remains to prove that%
\begin{equation*}
\varepsilon _{0}^{gr_{B}E}\circ m_{0,0}^{gr_{B}E}=m_{\mathbf{1}}\circ \left(
\varepsilon _{0}^{gr_{B}E}\otimes \varepsilon _{0}^{gr_{B}E}\right) .
\end{equation*}%
This follows easily once proved that $\varepsilon _{0}^{gr_{B}E}=\varepsilon
_{B}$ and $m_{0,0}^{gr_{B}E}=m_{B}.$

In view of Theorem \ref{teo: gr dual}, we have%
\begin{equation*}
\varepsilon _{0}^{gr_{B}E}=\varepsilon _{gr_{C}E}\circ
i_{0}^{gr_{B}E}=\varepsilon _{E}\circ \sigma _{B}^{0}\circ
p_{0}^{gr_{B}E}\circ i_{0}^{gr_{B}E}=\varepsilon _{E}\circ
i_{B}^{E}=\varepsilon _{B}.
\end{equation*}%
Since%
\begin{equation*}
i_{B}^{E}\circ m_{\wedge }^{0,0}\overset{\text{(\ref{form: Sweed7,5})}}{=}%
m_{E}\left( i_{B}^{E}\otimes i_{B}^{E}\right) =i_{B}^{E}m_{B}
\end{equation*}%
we get that $m_{\wedge }^{0,0}=m_{B}$ and hence%
\begin{equation*}
m_{0,0}^{gr_{B}E}=m_{0,0}^{gr_{B}E}\circ \left( p_{0}^{1}\otimes
p_{0}^{1}\right) \overset{\text{(\ref{form: Sweed8})}}{=}p_{0}^{1}\circ
m_{\wedge }^{0,0}=m_{\wedge }^{0,0}=m_{B}.
\end{equation*}
\end{proof}

\begin{theorem}
\label{teo: typeOne Alg}Let $\left( \mathcal{M},c\right) $ be a cocomplete
and complete abelian coabelian braided monoidal category satisfying $AB5$.
Assume that the tensor product commutes with direct sums. Let $%
i_{B}^{E}:B\hookrightarrow E$ be a monomorphism in $\mathcal{M}$ which is a
braided bialgebra homomorphism in $\mathcal{M}$ and let $i_{B^{\wedge
_{E}n}}^{E}:B^{\wedge _{E}n}\rightarrow E$ and $i_{B^{\wedge
_{E}a}}^{B^{\wedge _{E}b}}:B^{\wedge _{E}a}\rightarrow B^{\wedge _{E}b}$ ($%
a\leq b$) be the canonical injections. Assume that $i_{B^{\wedge
_{E}a}}^{B^{\wedge _{E}b}}$ is a split monomorphism in $\mathcal{M}$ for
every $a\leq b$. The following assertions are equivalent.

\begin{enumerate}
\item[$\left( 1\right) $] $gr_{B}E$ is the braided bialgebra of type one
associated to $B$ and $\frac{B\wedge _{E}B}{B}$.

\item[$\left( 2\right) $] $gr_{B}E$ is strongly $%
\mathbb{N}
$-graded as an algebra.

\item[$\left( 3\right) $] $A=\oplus _{n\in \mathbb{N}}B^{\wedge _{E}n+1}$ is
strongly $%
\mathbb{N}
$-graded as an algebra.

\item[$\left( 4\right) $] $B^{\wedge _{E}n+1}=\left( B^{\wedge _{E}2}\right)
^{\cdot _{E}n}$ for every $n\geq 2.$
\end{enumerate}
\end{theorem}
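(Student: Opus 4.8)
The plan is to establish the cycle $(1)\Leftrightarrow(2)$, $(2)\Leftrightarrow(3)$, $(3)\Leftrightarrow(4)$, using the two canonical comparison maps attached to $gr_{B}E$ together with the multiplication morphisms $m_{\wedge}^{a,b}$ of Lemma \ref{lem: mLambda}. I begin with $(2)\Leftrightarrow(3)$, which is the most immediate: setting $A_{n}:=B^{\wedge _{E}n+1}$, the split monomorphisms $i_{B^{\wedge _{E}a+1}}^{B^{\wedge _{E}b+1}}$ make $(A_{n})_{n}$ a direct system to which Proposition \ref{pro: grE alg} (and hence Lemma \ref{lem: GrDirect}) applies, and by construction $gr_{B}^{n}E=\frac{B^{\wedge _{E}n+1}}{B^{\wedge _{E}n}}=\frac{A_{n}}{A_{n-1}}$. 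Thus $gr_{B}E=\oplus _{n}\frac{A_{n}}{A_{n-1}}$ and Theorem \ref{teo: strongly Alg} gives at once that $A=\oplus _{n}B^{\wedge _{E}n+1}$ is strongly $\mathbb{N}$-graded as an algebra if and only if $gr_{B}E$ is.

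For $(3)\Leftrightarrow(4)$ I would first translate strong grading into an equality of subobjects of $E$. By Proposition \ref{pro: grE alg} the bidegree $(i,j)$ multiplication of $A$ is $m_{\wedge }^{i,j}$, and from (\ref{form: Sweed7,5}), since $i_{B^{\wedge _{E}i+j+1}}^{E}$ is a monomorphism, $m_{\wedge }^{i,j}$ is an epimorphism precisely when $(B^{\wedge _{E}i+1})\cdot _{E}(B^{\wedge _{E}j+1})=B^{\wedge _{E}i+j+1}$ as subobjects of $E$. Writing $C:=B^{\wedge _{E}2}=B\wedge _{E}B$, the unit axiom (\ref{form: locUnit}) shows that $m_{\wedge }^{d,0}$ and $m_{\wedge }^{0,d}$ are split epimorphisms, so the cases $i=0$ or $j=0$ are automatic and $(3)$ is equivalent to the family of identities with $i,j\geq 1$. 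Assuming $(3)$, an induction on $n$ starting from $C\cdot _{E}C=B^{\wedge _{E}3}$ and using $(B^{\wedge _{E}n})\cdot _{E}C=B^{\wedge _{E}n+1}$ yields $B^{\wedge _{E}n+1}=C^{\cdot _{E}n}$ for every $n\geq 2$, which is $(4)$; conversely, assuming $(4)$ and invoking the associativity $C^{\cdot _{E}i}\cdot _{E}C^{\cdot _{E}j}=C^{\cdot _{E}i+j}$ of subobject products from Claim \ref{claim: Ideal powers}, one recovers $(B^{\wedge _{E}i+1})\cdot _{E}(B^{\wedge _{E}j+1})=B^{\wedge _{E}i+j+1}$ for all $i,j\geq 1$, hence $(3)$.

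Finally, for $(1)\Leftrightarrow(2)$ I would exploit the graded braided bialgebra structure of $gr_{B}E$ (Theorem \ref{teo: gr_BE Bialg}) and the two comparison maps attached to it: the canonical algebra map $\varphi :T_{B}(M)\rightarrow gr_{B}E$, where $M=\frac{B\wedge _{E}B}{B}=gr_{B}^{1}E$, and the coalgebra map $\psi :gr_{B}E\rightarrow T_{B}^{c}(M)$ of Theorem \ref{teo: graded dual}. Both restrict to the identity in degrees $0$ and $1$, so by the universal property of the tensor algebra the composite $\psi \circ \varphi $ agrees with the canonical map $T_{B}(M)\rightarrow T_{B}^{c}(M)$ whose image is, by definition, the braided bialgebra of type one $B[M]$. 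By Theorem \ref{teo: gr dual} the coalgebra $gr_{B}E$ is always strongly $\mathbb{N}$-graded, whence by Theorem \ref{teo: graded dual} the map $\psi $ is always a monomorphism; therefore $\mathrm{Im}(\psi \circ \varphi )=\psi (\mathrm{Im}\,\varphi )\cong \mathrm{Im}\,\varphi $, and $gr_{B}E\cong B[M]$ holds exactly when $\varphi $ is an epimorphism. By the equivalence of assertions $(a)$ and $(c)$ in Theorem \ref{teo: graded} applied to the graded algebra $gr_{B}E$, this is in turn equivalent to $gr_{B}E$ being strongly $\mathbb{N}$-graded as an algebra, i.e. to $(2)$.

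The step I expect to be the main obstacle is this last one: one must verify carefully that $\psi \circ \varphi $ is genuinely the structural map defining $B[M]$, matching the full braided bialgebra structures and not merely the underlying graded objects, and that the monomorphy of $\psi$ --- which rests precisely on $gr_{B}E$ being \emph{automatically} strongly $\mathbb{N}$-graded as a coalgebra --- is what allows the passage from ``$\varphi$ epimorphism'' to the identification of $gr_{B}E$ with $B[M]$ as graded braided bialgebras. The remaining equivalences are then routine, the only delicate points being the boundary cases $i=0$ or $j=0$ in $(3)\Leftrightarrow(4)$ and the associativity of subobject products.
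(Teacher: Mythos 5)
Your proposal is correct and its overall architecture --- the cycle $(1)\Leftrightarrow(2)\Leftrightarrow(3)\Leftrightarrow(4)$ --- coincides with the paper's. In particular, $(2)\Leftrightarrow(3)$ via Theorem \ref{teo: strongly Alg} is exactly the paper's argument, and your reconstruction of $(1)\Leftrightarrow(2)$ is precisely the content of the result the paper simply cites (\cite[Theorem 6.8]{AM} together with Theorem \ref{teo: gr dual}), including the key observation that $\psi$ is automatically a monomorphism because $gr_{B}E$ is always strongly $\mathbb{N}$-graded as a coalgebra; one small slip is that the equivalence ``$\varphi$ epimorphism iff strongly $\mathbb{N}$-graded'' should be cited from \cite[Theorem 3.11]{AM} in its general form rather than from Theorem \ref{teo: graded}, which is stated only for algebras of the form $gr_{I}A$. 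Where you genuinely diverge is $(3)\Leftrightarrow(4)$: the paper introduces the canonical graded algebra map $\varphi :T_{B}\left( B^{\wedge _{E}2}\right) \rightarrow \oplus _{n\in \mathbb{N}}B^{\wedge _{E}n+1}$, proves by induction the identity $i_{B^{\wedge _{E}n+1}}^{E}\circ \varphi _{n}=\overline{m}_{E}^{n-1}\circ \left( i_{B^{\wedge _{E}2}}^{E}\right) ^{\otimes _{B}n}$, and reads off that $\varphi _{n}$ is an epimorphism exactly when $B^{\wedge _{E}n+1}=\left( B^{\wedge _{E}2}\right) ^{\cdot _{E}n}$, invoking \cite[Theorem 3.11]{AM} once more to relate this to strong grading. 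You instead translate ``$m_{\wedge }^{i,j}$ epimorphism'' directly into the subobject identity $\left( B^{\wedge _{E}i+1}\right) \cdot _{E}\left( B^{\wedge _{E}j+1}\right) =B^{\wedge _{E}i+j+1}$ via (\ref{form: Sweed7,5}), dispose of the boundary cases $i=0$ or $j=0$ by the unit axioms, and run an elementary induction using associativity of products of subobjects. Both arguments are sound; yours avoids the detour through the tensor algebra and its universal property for this step, at the cost of having to justify separately the reduction to $i,j\geq 1$ and the associativity $C^{\cdot _{E}i}\cdot _{E}C^{\cdot _{E}j}=C^{\cdot _{E}i+j}$, which the paper gets for free from the associativity of $m_{E}$ via the identity displayed above.
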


\begin{proof}
Consider the graded algebra homomorphism
\begin{equation*}
\oplus _{n\in \mathbb{N}}p_{B^{\wedge _{E}n}}^{B^{\wedge _{E}n+1}}:\oplus
_{n\in \mathbb{N}}B^{\wedge _{E}n+1}\rightarrow gr_{B}E
\end{equation*}%
of Proposition \ref{pro: grE alg}.

$\left( 1\right) \Leftrightarrow \left( 2\right) $ It follows in view of
\cite[Theorem 6.8]{AM} (where $AB5$ is required) and by Theorem \ref{teo: gr
dual}.

$\left( 2\right) \Leftrightarrow \left( 3\right) $ It follows by Theorem \ref%
{teo: strongly Alg}.

$\left( 3\right) \Leftrightarrow \left( 4\right) $ Let $\varphi
:T=T_{B}\left( B^{\wedge _{E}2}\right) \rightarrow \oplus _{n\in \mathbb{N}%
}B^{\wedge _{E}n+1}$ be the canonical morphism arising from the universal
property of the tensor algebra and let $\varphi _{n}:\left( B^{\wedge
_{E}2}\right) ^{\otimes _{B}n}\rightarrow B^{\wedge _{E}n+1}$ be its graded $%
n$-th component. In view of \cite[Theorem 3.11]{AM}, $\left( 3\right) $ is
equivalent to require that $\varphi _{n}$ is an epimorphism for every $n\geq
2$ (note that $\varphi _{0}$ and $\varphi _{1}$ are always isomorphisms).
Let us prove that%
\begin{equation}
\varphi _{n}\circ \chi _{\left( B^{\wedge _{E}2}\right) ^{\otimes
_{B}n-1},B^{\wedge _{E}2}}=m_{\wedge }^{n-1,1}\circ \left( \varphi
_{n-1}\otimes B^{\wedge _{E}2}\right) ,\text{ for every }n\geq 2.
\label{form: phi}
\end{equation}%
where $\chi _{X,Y}:X\otimes Y\rightarrow X\otimes _{B}Y$ denotes the
canonical projection.

Note that, being $\varphi $ a graded homomorphism and in view of its
definition, one has%
\begin{equation*}
i_{B^{\wedge _{E}n}}^{A}\circ \varphi _{n-1}=\varphi \circ i_{\left(
B^{\wedge _{E}2}\right) ^{\otimes _{B}n-1}}^{T}=\overline{m}_{A}^{n-2}\circ
\left( i_{B^{\wedge _{E}2}}^{A}\right) ^{\otimes _{B}n-1}
\end{equation*}%
so that%
\begin{eqnarray*}
&&\varphi _{n}\circ \chi _{\left( B^{\wedge _{E}2}\right) ^{\otimes
_{B}n-1},B^{\wedge _{E}2}}=p_{n}^{A}\circ \overline{m}_{A}^{n-1}\circ \left(
i_{B^{\wedge _{E}2}}^{A}\right) ^{\otimes _{B}n}\circ \chi _{\left(
B^{\wedge _{E}2}\right) ^{\otimes _{B}n-1},B^{\wedge _{E}2}} \\
&=&p_{n}^{A}\circ m_{A}\circ \left[ \left( \overline{m}_{A}^{n-2}\circ
\left( i_{B^{\wedge _{E}2}}^{A}\right) ^{\otimes _{B}n-1}\right) \otimes
i_{B^{\wedge _{E}2}}^{A}\right] \\
&=&p_{n}^{A}\circ m_{A}\circ \left[ \left( i_{B^{\wedge _{E}n}}^{A}\circ
\varphi _{n-1}\right) \otimes i_{B^{\wedge _{E}2}}^{A}\right]
=p_{n}^{A}\circ m_{A}\circ \left( i_{B^{\wedge _{E}n}}^{A}\otimes
i_{B^{\wedge _{E}2}}^{A}\right) \circ \left( \varphi _{n-1}\otimes B^{\wedge
_{E}2}\right) \\
&&\overset{\text{(\ref{form: coro grAlg1})}}{=}p_{n}^{A}\circ i_{B^{\wedge
_{E}n+1}}^{A}\circ m_{n-1,1}^{A}\circ \left( \varphi _{n-1}\otimes B^{\wedge
_{E}2}\right) =m_{\wedge }^{n-1,1}\circ \left( \varphi _{n-1}\otimes
B^{\wedge _{E}2}\right) .
\end{eqnarray*}%
Hence (\ref{form: phi}) holds. Next we prove%
\begin{equation}
i_{B^{\wedge _{E}n+1}}^{E}\circ \varphi _{n}=\overline{m}_{E}^{n-1}\circ
\left( i_{B^{\wedge _{E}2}}^{E}\right) ^{\otimes _{B}n},\text{ for every }%
n\geq 2.  \label{form: phi 2}
\end{equation}%
This is achieved by induction, composing on the right both sides with the
epimorphism $\chi _{\left( B^{\wedge _{E}2}\right) ^{\otimes
_{B}n-1},B^{\wedge _{E}2}}$ and using (\ref{form: phi}), (\ref{form:
Sweed7,5}).

Now, if $\varphi _{n}$ is an epimorphism, then
\begin{equation*}
\left( B^{\wedge _{E}n+1},i_{B^{\wedge _{E}n+1}}^{E}\right) =\mathrm{Im}%
\left[ \overline{m}_{E}^{n-1}\circ \left( i_{B^{\wedge _{E}2}}^{E}\right)
^{\otimes _{B}n}\right] =\left( B^{\wedge _{E}2}\right) ^{\cdot _{E}n}.
\end{equation*}%
Conversely, if $\left( B^{\wedge _{E}n+1},i_{B^{\wedge _{E}n+1}}^{E}\right)
=\left( B^{\wedge _{E}2}\right) ^{\cdot _{E}n},$ then there exists an
epimorphism%
\begin{equation*}
\varphi _{n}^{\prime }:\left( B^{\wedge _{E}2}\right) ^{\otimes
_{B}n}\rightarrow B^{\wedge _{E}n+1}
\end{equation*}%
such that
\begin{equation*}
i_{B^{\wedge _{E}n+1}}^{E}\circ \varphi _{n}^{\prime }=\overline{m}%
_{E}^{n-1}\circ \left( i_{B^{\wedge _{E}2}}^{E}\right) ^{\otimes _{B}n}.
\end{equation*}%
In view of (\ref{form: phi 2}) and since $i_{B^{\wedge _{E}n+1}}^{E}$ is a
monomorphism, we get that $\varphi _{n}=\varphi _{n}^{\prime }$.
\end{proof}

\begin{corollary}
\label{coro: pre-Lift}Let $H$ be a subbialgebra of a bialgebra $E$ over a
field $K$. The following assertions are equivalent.

\begin{enumerate}
\item[$\left( 1\right) $] $gr_{H}E$ is the bialgebra of type one associated
to $H$ and $\frac{H\wedge _{E}H}{H}$.

\item[$\left( 2\right) $] $gr_{H}E$ is strongly $%
\mathbb{N}
$-graded as an algebra i.e. $gr_{H}E$ is generated as an algebra by $H$ and $%
\frac{H\wedge _{E}H}{H}.$

\item[$\left( 3\right) $] $\oplus _{n\in \mathbb{N}}H^{\wedge _{E}n+1}$ is
strongly $%
\mathbb{N}
$-graded as an algebra.

\item[$\left( 4\right) $] $H^{\wedge _{E}n+1}=\left( H\wedge _{E}H\right)
^{\cdot _{E}n}$ for every $n\geq 2.$
\end{enumerate}
\end{corollary}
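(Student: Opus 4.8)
The plan is to obtain the statement as the specialization of Theorem \ref{teo: typeOne Alg} to the category of vector spaces over the field $K$. First I would take $\left(\mathcal{M},c\right)$ to be $\mathrm{Vec}_K$, the category of $K$-vector spaces, endowed with the usual tensor product over $K$ and the symmetric braiding $c_{X,Y}(x\otimes y)=y\otimes x$. This is a Grothendieck category, hence cocomplete, complete, abelian and satisfying $AB5$; over a field $\otimes_K$ is exact and commutes with arbitrary direct sums, so $\mathrm{Vec}_K$ is abelian monoidal, and since $c$ is symmetric the same exactness makes $\mathcal{M}^{o}$ abelian monoidal too, so that $\mathrm{Vec}_K$ is coabelian monoidal as well. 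Moreover an ordinary bialgebra is exactly a braided bialgebra in $\left(\mathrm{Vec}_K,c\right)$, so a subbialgebra inclusion $i_H^E\colon H\hookrightarrow E$ is a monomorphism which is a braided bialgebra homomorphism. Thus all the structural hypotheses of Theorem \ref{teo: typeOne Alg} are met.

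The one hypothesis deserving explicit attention is that $i_{H^{\wedge_E a}}^{H^{\wedge_E b}}$ be a split monomorphism for all $a\le b$. Over a field this is automatic: every injective $K$-linear map is split, because every subspace admits a linear complement. Hence this assumption holds for free, and Theorem \ref{teo: typeOne Alg} applies verbatim, giving the equivalence of assertions $(1)$--$(4)$. Under the specialization the categorical notions reduce to the classical ones of Nichols: the braided bialgebra of type one becomes the ordinary bialgebra of type one associated to $H$ and $\frac{H\wedge_E H}{H}$, and the symbol $\cdot_E$ becomes the ordinary product of subspaces of $E$.

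It remains to account for the clarifying phrase in $(2)$, namely that $gr_H E$ is strongly $\mathbb{N}$-graded as an algebra precisely when it is generated as an algebra by $H$ and $\frac{H\wedge_E H}{H}$. Here I would invoke the equivalence $(a)\Leftrightarrow(c)$ recorded in Theorem \ref{teo: graded} (equivalently \cite[Theorem 3.11]{AM}): strong gradedness is equivalent to surjectivity of the canonical algebra map $\varphi\colon T_{A_0}(A_1)\to gr$ from the tensor algebra on the degree-one part over the degree-zero part. Since for $gr_H E$ one has $gr_H^0 E=H$ and $gr_H^1 E=\frac{H\wedge_E H}{H}$, surjectivity of $\varphi$ is exactly the statement that $gr_H E$ is generated as an algebra by these two components. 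I do not expect any genuine obstacle in this corollary: the content is entirely in Theorem \ref{teo: typeOne Alg}, and the only point to verify carefully is the automatic splitting of monomorphisms over a field.
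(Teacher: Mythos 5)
Your proposal is correct and follows exactly the paper's route: the paper's proof is the one-line specialization of Theorem \ref{teo: typeOne Alg} to $(\mathfrak{Vec}(K),\tau)$ with the canonical flip, and your additional verifications (the categorical hypotheses, the automatic splitting of monomorphisms over a field, and the reformulation of strong gradedness as generation in degrees $0$ and $1$) are precisely the details the authors leave implicit.
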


\begin{proof}
We apply Theorem \ref{teo: typeOne Alg} to the case $\left( \mathcal{M}%
,c\right) =\left( \mathfrak{Vec}\left( K\right) ,\tau \right) $ where $\tau $
is the canonical flip.
\end{proof}

\begin{remark}
Let $H$ be a subbialgebra of a bialgebra $E$ over a field $K$ and assume
that $H$ contains the coradical of $E$ (e.g. $E$ is connected). Assume that
one of the conditions of Corollary \ref{coro: pre-Lift} holds. Since $H$
contains the coradical of $E,$ then the filtration $\left( H^{\wedge
_{E}n+1}\right) _{n\in
\mathbb{N}
}$ is exhaustive hence $\left( 4\right) $ implies that $E$ is generated as
an algebra by $H\wedge _{E}H.$ The converse of this implication seems not to be true in general. Nevertheless we could not find a counterexample.

When $E$ is connected, in Corollary \ref{coro: pre-Lift}, we recover part of
\cite[Theorem 3.5]{Kharchenko} although for ordinary Hopf algebras.
\end{remark}

\section{The Associated Graded Algebra of a Bialgebra With Respect to a
Quotient bialgebra\label{sec: ass Grad alg Bialg}}

\begin{lemma}
\label{lem: graded braided Co}Let $\left( \mathcal{M},c\right) $ be a
cocomplete coabelian braided monoidal category such that the tensor product
commutes with direct sums. Consider in $\left( \mathcal{M},c\right) $ a
datum $(B,m_{B},u_{B},\Delta _{B},\varepsilon _{B})$ consisting of a graded
object $B$ with graduation defined by $\left( B_{k}\right) _{k\in \mathbb{N}%
} $ such that, with respect to this graduation,

\begin{itemize}
\item $\left( B,m_{B},u_{B}\right) $ is a graded algebra and

\item $(B,\Delta _{B},\varepsilon _{B})$ is a graded coalgebra.
\end{itemize}

Assume that for every $a,b\in
\mathbb{N}
,$%
\begin{eqnarray*}
&&\sum\limits_{s+t=a+b}\Delta _{a,b}m_{s,t}\left( p_{s}\otimes p_{t}\right)
\\
&=&\sum\limits_{s^{\prime }+t^{\prime }=a}\sum\limits_{s^{\prime \prime
}+t^{\prime \prime }=b}\left( m_{s^{\prime },t^{\prime }}\otimes
m_{s^{\prime \prime },t^{\prime \prime }}\right) \circ \left( B_{s^{\prime
}}\otimes c_{B_{s^{\prime \prime }},B_{t^{\prime }}}\otimes B_{t^{\prime
\prime }}\right) \circ \left( \Delta _{s^{\prime },s^{\prime \prime
}}\otimes \Delta _{t^{\prime },t^{\prime \prime }}\right) \circ \left(
p_{s^{\prime }+s^{\prime \prime }}\otimes p_{t^{\prime }+t^{\prime \prime
}}\right)
\end{eqnarray*}%
\begin{equation*}
\Delta _{0,0}\circ u_{0}=\left( u_{0}\otimes u_{0}\right) \circ \Delta _{%
\mathbf{1}}.
\end{equation*}%
Then $B$ is a graded braided bialgebra in $\left( \mathcal{M},c\right) $.
\end{lemma}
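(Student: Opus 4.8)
The plan is to treat Lemma \ref{lem: graded braided Co} as the step-by-step dual of Lemma \ref{lem: graded braided}. Since the datum already supplies a graded algebra $(B,m_B,u_B)$ and a graded coalgebra $(B,\Delta_B,\varepsilon_B)$ on the same graded object, what remains is to check the braided bialgebra compatibilities relating the two structures, i.e. that $\Delta_B$ and $\varepsilon_B$ are morphisms of algebras (equivalently that $m_B$ and $u_B$ are morphisms of coalgebras). I would verify each of these global identities by reducing it to its graded components. Where the proof of Lemma \ref{lem: graded braided} precomposes with the canonical injections and expands via (\ref{form: coro grAlg1}) and (\ref{form: grCoalg}), here I would dually postcompose with the canonical projections and expand via (\ref{form: grAlg}) and (\ref{form: coro grCoalg1}).

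The core step is the multiplicativity of the comultiplication, i.e. $\Delta_B\circ m_B=(m_B\otimes m_B)\circ(B\otimes c_{B,B}\otimes B)\circ(\Delta_B\otimes\Delta_B)$. I would postcompose both sides with $p_a^B\otimes p_b^B$. On the left, (\ref{form: coro grCoalg1}) turns $(p_a^B\otimes p_b^B)\circ\Delta_B$ into $\Delta_{a,b}\circ p_{a+b}^B$, and then (\ref{form: grAlg}) turns $p_{a+b}^B\circ m_B$ into $\sum_{s+t=a+b}m_{s,t}\circ(p_s\otimes p_t)$, producing exactly the left-hand side of the first hypothesis. On the right, applying the same two formulas to each occurrence of $m_B$ and $\Delta_B$, and using naturality of the braiding to rewrite $c_{B,B}$ on components as $c_{B_{s^{\prime\prime}},B_{t^{\prime}}}$, produces the quadruple sum on the right-hand side of the first hypothesis. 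Since $\mathcal{M}$ is (co)abelian, $B\otimes B$ is the coproduct of the $B_a\otimes B_b$ and, in each fixed total degree, the relevant morphisms factor through the corresponding finite biproduct; hence agreement of all these components is equivalent to the global identity, and this is precisely the first assumption.

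The unit and counit compatibilities collapse to degree zero because $u_B=i_0^B\circ u_0$ and $\varepsilon_B=\varepsilon_0\circ p_0^B$. Postcomposing $\Delta_B\circ u_B=(u_B\otimes u_B)\circ\Delta_{\mathbf 1}$ with $p_a^B\otimes p_b^B$ annihilates every term except $a=b=0$ and leaves exactly $\Delta_{0,0}\circ u_0=(u_0\otimes u_0)\circ\Delta_{\mathbf 1}$, the second assumption. In the same way the conditions $\varepsilon_B\circ m_B=m_{\mathbf 1}\circ(\varepsilon_B\otimes\varepsilon_B)$ and $\varepsilon_B\circ u_B=\mathrm{Id}_{\mathbf 1}$ reduce to the corresponding statements for the degree-zero piece $(B_0,m_{0,0},u_0,\Delta_{0,0},\varepsilon_0)$, which one reads off from the unit and counit axioms of $B$ restricted to degree zero.

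I expect the only genuinely delicate point to be the bookkeeping in the braided term: matching the single index pair $(a,b)$ on the left against the four indices $s^{\prime},t^{\prime},s^{\prime\prime},t^{\prime\prime}$ on the right, and checking that the braiding supplied by the tensor-product coalgebra structure is reindexed to land on precisely $c_{B_{s^{\prime\prime}},B_{t^{\prime}}}$; note that the order here is opposite to the $c_{B_{t^{\prime}},B_{s^{\prime\prime}}}$ appearing in Lemma \ref{lem: graded braided}, reflecting the passage from injections to projections. The entire argument is the formal dual of the proof of Lemma \ref{lem: graded braided}, with the roles of (\ref{form: coro grAlg1}) and (\ref{form: grCoalg}) now played by (\ref{form: grAlg}) and (\ref{form: coro grCoalg1}); recognizing this duality is also the quickest way to be confident that the quadruple sum comes out exactly as stated.
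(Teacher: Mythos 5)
Your proposal is correct and takes essentially the same route as the paper: the paper's proof of Lemma \ref{lem: graded braided Co} is literally ``it is analogous to that of Lemma \ref{lem: graded braided}'', whose own proof is ``it follows easily by using (\ref{form: coro grAlg1}), (\ref{form: grCoalg})'', and your argument is precisely that dualization spelled out, with (\ref{form: grAlg}) and (\ref{form: coro grCoalg1}) replacing (\ref{form: coro grAlg1}) and (\ref{form: grCoalg}) and postcomposition with projections replacing precomposition with injections. The only slightly loose point (reading off the counit--multiplication compatibility from the degree-zero data) mirrors a looseness already present in the paper's own statement and proof, so it is not a defect of your approach relative to theirs.
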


\begin{proof}
It is analogous to that of Lemma \ref{lem: graded braided}.
\end{proof}

\begin{notations}
From now on the following assumptions and notations will be used.

$\left( \mathcal{M},c\right) $ is a cocomplete abelian coabelian braided
monoidal category. Assume that the tensor product commutes with direct sums.
\newline
Let $\pi :E\rightarrow B$ be an epimorphism in $\mathcal{M}$ which is a
braided bialgebra homomorphism in $\mathcal{M}$ and let $\left(
I,i_{I}^{E}\right) :=\ker \left( \pi \right) .$ Assume that%
\begin{equation*}
\frac{E}{i_{I^{a+1}}^{I^{a}}}:\frac{E}{I^{a+1}}\rightarrow \frac{E}{I^{a}}
\end{equation*}%
is a split epimorphism for every $a\in
\mathbb{N}
$, where $i_{I^{a+1}}^{I^{a}}:I^{a+1}\rightarrow I^{a}$ is the canonical
injection.

The family $((\frac{E}{I^{a}})_{a\in \mathbb{N}},(\frac{E}{%
i_{I^{a+1}}^{I^{a}}})_{a\in \mathbb{N}})$ fulfills the conditions of Theorem %
\ref{pro: Sweedler} when regarded inside the dual of the abelian monoidal
category $\mathcal{M}.$ Thus we have the following exact sequence%
\begin{equation*}
\bigoplus\limits_{a+b=n}I^{a}\otimes I^{b}\overset{\nabla \left[ \left(
i_{I^{a}}^{E}\otimes i_{I^{b}}^{E}\right) _{a+b=n}\right] }{\longrightarrow }%
E\otimes E\overset{\Delta \left[ \left( p_{I^{a}}^{E}\otimes
p_{I^{b}}^{E}\right) _{a+b=n+1}\right] }{\longrightarrow }%
\bigoplus\limits_{a+b=n+1}\frac{E}{I^{a}}\otimes \frac{E}{I^{b}}.
\end{equation*}%
Let%
\begin{equation*}
\left( I_{n}\left( E\right) ,\beta _{n}\right) :=\mathrm{Im}\left\{ \Delta %
\left[ \left( p_{I^{a}}^{E}\otimes p_{I^{b}}^{E}\right) _{a+b=n+1}\right]
\right\}
\end{equation*}%
and let
\begin{equation*}
\gamma _{n}:E\otimes E\twoheadrightarrow I_{n}\left( E\right)
\end{equation*}
be the unique morphism such that
\begin{equation}
\beta _{n}\circ \gamma _{n}=\Delta \left[ \left( p_{I^{a}}^{E}\otimes
p_{I^{b}}^{E}\right) _{a+b=n+1}\right] .  \label{form: def gammaCo}
\end{equation}%
Since $\left( I^{a+b},i_{I^{a+b}}^{E}\right) =\mathrm{\mathrm{Im}}\left(
m_{E}\circ \left( i_{I^{a}}^{E}\otimes i_{I^{b}}^{E}\right) \right) ,$ we
have $p_{I^{a+b}}^{E}\circ m_{E}\circ \left( i_{I^{a}}^{E}\otimes
i_{I^{b}}^{E}\right) =0$ so that%
\begin{equation*}
p_{I^{n}}^{E}\circ m_{E}\circ \nabla \left[ \left( i_{I^{a}}^{E}\otimes
i_{I^{b}}^{E}\right) _{a+b=n}\right] =0
\end{equation*}%
and hence, by the exactness of the sequence above, there exists a unique
morphism
\begin{equation*}
\alpha _{n}:I_{n}\left( E\right) \rightarrow \frac{E}{I^{n}}
\end{equation*}%
such that%
\begin{equation}
p_{I^{n}}^{E}\circ m_{E}=\alpha _{n}\circ \gamma _{n},\text{ for every }n\in
\mathbb{N}
\text{.}  \label{form: def alphaCo}
\end{equation}
\end{notations}

\begin{lemma}
\label{lem: mLambdaCo}0) For every $s,t,u,v\in
\mathbb{N}
,$ we have
\begin{eqnarray}
&&\left( \beta _{u}\otimes \beta _{v}\right) \circ \left( \gamma _{u}\otimes
\gamma _{v}\right) \circ \left( E\otimes c\otimes E\right) \circ \left(
\Delta _{E}\otimes \Delta _{E}\right) \circ \left( i_{I^{s}}^{E}\otimes
i_{I^{t}}^{E}\right)  \label{form: Sweed5,5Co} \\
&=&\Delta \left[ \left( \left( \frac{E}{I^{a}}\otimes c_{\frac{E}{I^{b}},%
\frac{E}{I^{c}}}\otimes \frac{E}{I^{d}}\right) \circ \left( \left[ \left(
p_{I^{a}}^{E}\otimes p_{I^{b}}^{E}\right) \Delta _{E}i_{I^{s}}^{E}\right]
\otimes \left[ \left( p_{I^{c}}^{E}\otimes p_{I^{d}}^{E}\right) \Delta
_{E}i_{I^{t}}^{E}\right] \right) \right) _{\substack{ a+c=u+1  \\ b+d=v+1}}%
\right]  \notag
\end{eqnarray}

1) The following relations hold.%
\begin{equation}
\left( p_{I^{u}}^{E}\otimes p_{I^{v}}^{E}\right) \circ \Delta _{E}\circ
i_{I^{u+v-1}}^{E}=0\text{ for every }u,v\in
\mathbb{N}
,u+v\geq 1.  \label{form: Sweed6,5Co}
\end{equation}%
2) For every $a,b\in
\mathbb{N}
$, there exists a unique morphism
\begin{equation*}
\Delta _{\vee }^{a,b}:\frac{E}{I^{a+b+1}}\rightarrow \frac{E}{I^{a+1}}%
\otimes \frac{E}{I^{b+1}}
\end{equation*}%
such that
\begin{equation}
\Delta _{\vee }^{a,b}\circ p_{I^{a+b+1}}^{E}=\left( p_{I^{a+1}}^{E}\otimes
p_{I^{b+1}}^{E}\right) \Delta _{E},\text{ for every }a,b\in
\mathbb{N}
.  \label{form: Sweed7,5Co}
\end{equation}%
3) For every $a,b,c,d\in
\mathbb{N}
$, we have%
\begin{gather*}
\left( \Delta _{\vee }^{a,b}\otimes \frac{E}{I^{c+1}}\right) \circ \Delta
_{\vee }^{a+b,c}=\left( \frac{E}{I^{a+1}}\otimes \Delta _{\vee
}^{b,c}\right) \circ \Delta _{\vee }^{a,b+c}\text{,} \\
\left( \frac{E}{I^{d+1}}\otimes \varepsilon _{B}\right) \circ \Delta _{\vee
}^{d,0}=r_{E/I^{d+1}}^{-1},\qquad \left( \varepsilon _{B}\otimes \frac{E}{%
I^{d+1}}\right) \circ \Delta _{\vee }^{0,d}=l_{E/I^{d+1}}^{-1}\text{,} \\
\left( \frac{E}{i_{I^{a+2}}^{I^{a+1}}}\otimes \frac{E}{I^{b+1}}\right) \circ
\Delta _{\vee }^{a+1,b}=\Delta _{\vee }^{a,b}\circ \frac{E}{%
i_{I^{a+b+2}}^{I^{a+b+1}}},\qquad \left( \frac{E}{I^{a+1}}\otimes \frac{E}{%
i_{I^{b+2}}^{I^{b+1}}}\right) \circ \Delta _{\vee }^{a,b+1}=\Delta _{\vee
}^{a,b}\circ \frac{E}{i_{I^{a+b+2}}^{I^{a+b+1}}}
\end{gather*}
\end{lemma}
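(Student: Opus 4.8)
The plan is to deduce the present statement from Lemma \ref{lem: mLambda} by passing to the dual monoidal category $\mathcal{M}^{o}$. By Definitions \ref{abelian assumptions}, coabelianness of $\mathcal{M}$ means that $\mathcal{M}^{o}$ is abelian monoidal; since $\mathcal{M}$ is simultaneously abelian and the tensor product commutes with direct sums, $\mathcal{M}^{o}$ meets the standing hypotheses of Section \ref{sec: ass Grad Coalg Bialg}. Under the identity-on-objects duality functor $D\colon\mathcal{M}\rightarrow\mathcal{M}^{o}$, which reverses the order of composition, the multiplication $m_{E}$ and the comultiplication $\Delta_{E}$ interchange, so the bialgebra epimorphism $\pi\colon E\rightarrow B$ with kernel $I$ becomes a bialgebra monomorphism $B\hookrightarrow E$ in $\mathcal{M}^{o}$, the ideal powers $I^{n}$ correspond to the wedge powers $B^{\wedge_{E}n}$, and the canonical morphisms swap roles: $p_{I^{n}}^{E}$ becomes the injection $i_{B^{\wedge_{E}n}}^{E}$ while $i_{I^{n}}^{E}$ becomes the projection $p_{B^{\wedge_{E}n}}^{E}$. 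Finally, the hypothesis that $E/i_{I^{a+1}}^{I^{a}}$ be a split epimorphism is exactly the dual of the hypothesis of Lemma \ref{lem: mLambda} that $i_{B^{\wedge_{E}a}}^{B^{\wedge_{E}b}}$ be a split monomorphism, and the morphisms $\beta_{n},\gamma_{n},\alpha_{n}$ introduced here are the $D$-images of the identically named morphisms preceding Lemma \ref{lem: mLambda}, so that (\ref{form: def gammaCo}) and (\ref{form: def alphaCo}) dualize (\ref{form: def gamma}) and (\ref{form: def alpha}).

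Granting this dictionary, parts 1) and 2) follow at once. Applying $D$ to part 1) of Lemma \ref{lem: mLambda}, namely $p_{B^{\wedge_{E}u+v-1}}^{E}\circ m_{E}\circ(i_{B^{\wedge_{E}u}}^{E}\otimes i_{B^{\wedge_{E}v}}^{E})=0$, and reversing the order of composition yields precisely $(p_{I^{u}}^{E}\otimes p_{I^{v}}^{E})\circ\Delta_{E}\circ i_{I^{u+v-1}}^{E}=0$, which is (\ref{form: Sweed6,5Co}); intrinsically this records the coideal estimate $\Delta_{E}(I^{u+v-1})\subseteq\sum_{a+b=u+v-1}I^{a}\otimes I^{b}$, every summand of which is killed by $p_{I^{u}}^{E}\otimes p_{I^{v}}^{E}$ since $a<u$ and $b<v$ would force $a+b\leq u+v-2$. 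For part 2), specializing (\ref{form: Sweed6,5Co}) to $u=a+1$, $v=b+1$ gives $(p_{I^{a+1}}^{E}\otimes p_{I^{b+1}}^{E})\circ\Delta_{E}\circ i_{I^{a+b+1}}^{E}=0$; since $p_{I^{a+b+1}}^{E}=\mathrm{Coker}(i_{I^{a+b+1}}^{E})$, the universal property of the cokernel furnishes the unique $\Delta_{\vee}^{a,b}$ obeying (\ref{form: Sweed7,5Co}), uniqueness holding because $p_{I^{a+b+1}}^{E}$ is an epimorphism. Equivalently, $\Delta_{\vee}^{a,b}$ is the $D$-image of the morphism $m_{\wedge}^{a,b}$ produced in part 2) of Lemma \ref{lem: mLambda}.

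The three families of identities in part 3) are obtained by the routine device of precomposing with a suitable projection and cancelling it. Each asserted equality is precomposed with the epimorphism $p_{I^{\bullet}}^{E}$ for the appropriate index, whereupon (\ref{form: Sweed7,5Co}) reduces it to a statement about $\Delta_{E}$ alone: coassociativity of $\Delta_{E}$ delivers the coassociativity of $\Delta_{\vee}$; the counit axiom of $E$ together with $\varepsilon_{B}\circ p_{I}^{E}=\varepsilon_{E}$ delivers the two counit identities; and the compatibility between $p_{I^{n}}^{E}$ and $p_{I^{n+1}}^{E}$ delivers the two relations with $E/i_{I^{a+2}}^{I^{a+1}}$. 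As the precomposed projection is an epimorphism, each identity follows; alternatively all three are read off from part 3) of Lemma \ref{lem: mLambda} through the dictionary above.

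The genuine content, and the main obstacle, is part 0); it is the long diagram chase dual to the proof of part 0) of Lemma \ref{lem: mLambda} collected in Appendix \ref{sec: techn}. The plan is to rewrite $\beta_{u}\circ\gamma_{u}$ and $\beta_{v}\circ\gamma_{v}$ via their defining relation (\ref{form: def gammaCo}), exhibiting them as the diagonal morphisms $\Delta[(p_{I^{a}}^{E}\otimes p_{I^{b}}^{E})_{a+b=u+1}]$ and $\Delta[(p_{I^{c}}^{E}\otimes p_{I^{d}}^{E})_{c+d=v+1}]$, then to commute the comultiplications $\Delta_{E}\otimes\Delta_{E}$ and the middle braiding $E\otimes c\otimes E$ past these projections using the bialgebra compatibility of $\Delta_{E}$ with $m_{E}$ and the naturality of $c$, and finally to reassemble the outcome as the single diagonal morphism over the index set $\{a+c=u+1,\,b+d=v+1\}$ by means of (\ref{form deltasum1}). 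This is exactly the identity that will feed the verification that $gr_{I}E$ is a graded braided bialgebra, dually to Theorem \ref{teo: gr_BE Bialg}, and it is most economically recorded as the $D$-image of the corresponding Appendix computation for Lemma \ref{lem: mLambda}.
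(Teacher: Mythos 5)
Your proposal is correct and follows essentially the same route as the paper, whose entire proof of this lemma is the remark that it is analogous to (i.e.\ the formal dual of) Lemma \ref{lem: mLambda}; your explicit dictionary between $(I^{n},i_{I^{n}}^{E},p_{I^{n}}^{E},m_{E},\overline{m})$ and $(B^{\wedge _{E}n},p_{B^{\wedge _{E}n}}^{E},i_{B^{\wedge _{E}n}}^{E},\Delta _{E},\overline{\Delta })$ is exactly what makes that remark work, and your direct verifications of 1)--3) match the dualized Appendix computation. One small point: for part 0) no bialgebra compatibility between $\Delta _{E}$ and $m_{E}$ is needed --- only the defining relation (\ref{form: def gammaCo}) and the naturality of $c$, just as in the paper's proof of part 0) of Lemma \ref{lem: mLambda}.
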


\begin{proof}
It is analogous to that of Lemma \ref{lem: mLambda}.
\end{proof}

\begin{proposition}
\label{pro: grE algCo}$\oplus _{n\in \mathbb{N}}\frac{E}{I^{n+1}}$ is a
graded coalgebra and there is a unique coalgebra structure on $%
gr_{I}E=\oplus _{n\in \mathbb{N}}gr_{I}^{n}E$ such that
\begin{equation*}
\oplus _{n\in \mathbb{N}}\frac{i_{I^{n}}^{E}}{I^{n+1}}:gr_{I}E\rightarrow
\oplus _{n\in \mathbb{N}}\frac{E}{I^{n+1}}
\end{equation*}%
is a coalgebra homomorphism and

\begin{enumerate}
\item $gr_{I}E=\oplus _{n\in \mathbb{N}}gr_{I}^{n}E$ is a graded coalgebra
such that $\oplus _{n\in \mathbb{N}}\frac{i_{I^{n}}^{E}}{I^{n+1}}$ is a
graded homomorphism;

\item
\begin{equation}
\left( \frac{i_{I^{a}}^{E}}{I^{a+1}}\otimes \frac{i_{I^{b}}^{E}}{I^{b+1}}%
\right) \circ \Delta _{a,b}^{gr_{I}E}=\Delta _{\vee }^{a,b}\circ \frac{%
i_{I^{a+b}}^{E}}{I^{a+b+1}};  \label{form: Sweed8Co}
\end{equation}

\item $\varepsilon _{gr_{I}E}=\varepsilon _{B}\circ p_{0}^{gr_{I}E}.$
\end{enumerate}
\end{proposition}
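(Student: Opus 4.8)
The plan is to obtain the coalgebra structure on $gr_I E$ exactly dually to the way the algebra structure on $gr_B E$ was produced in Proposition \ref{pro: grE alg}, i.e. by invoking the general machinery of Lemma \ref{lem: GrDirect coalg Inverse}. Concretely, I would apply that lemma to the family $C_n:=\frac{E}{I^{n+1}}$, regarded as an inverse system via the transition maps $\frac{E}{i_{I^{n+1}}^{I^n}}:\frac{E}{I^{n+1}}\rightarrow \frac{E}{I^n}$, which are epimorphisms (indeed split epimorphisms) by the standing assumption. As local comultiplications I would take the morphisms $\Delta_{a,b}^C:=\Delta_\vee^{a,b}$ supplied by Lemma \ref{lem: mLambdaCo}, and as counit on $C_0=\frac{E}{I}$ the map $\varepsilon_0^C:=\varepsilon_B$.

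First I would check that the hypotheses of Lemma \ref{lem: GrDirect coalg Inverse} are met. Under the dictionary $C_n=\frac{E}{I^{n+1}}$, $\Delta_{a,b}^C=\Delta_\vee^{a,b}$, $\beta_{C_{n}}^{C_{n-1}}=\frac{E}{i_{I^{n+1}}^{I^n}}$, the three required identities (\ref{form: locDelta}), (\ref{form: locEps}) and (\ref{form: locComp Coalg Inv}) are literally the three displayed relations in part 3) of Lemma \ref{lem: mLambdaCo}, so no fresh computation is needed at this stage. Next I would identify the kernels the lemma forms: from the inclusions $I^{n+1}\subseteq I^n\subseteq E$ one has the exact sequence
\begin{equation*}
0\rightarrow \frac{I^n}{I^{n+1}}\overset{\frac{i_{I^n}^E}{I^{n+1}}}{\longrightarrow }\frac{E}{I^{n+1}}\overset{\frac{E}{i_{I^{n+1}}^{I^n}}}{\longrightarrow }\frac{E}{I^n}\rightarrow 0,
\end{equation*}
whence $\left(E_n,i_{E_n}^{C_n}\right)=\ker\left(\beta_{C_n}^{C_{n-1}}\right)=\left(gr_I^n E,\frac{i_{I^n}^E}{I^{n+1}}\right)$. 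In particular $\oplus_n E_n=gr_I E$ and $\oplus_n i_{E_n}^{C_n}=\oplus_n\frac{i_{I^n}^E}{I^{n+1}}$ is exactly the candidate coalgebra homomorphism in the statement.

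With these identifications, Lemma \ref{lem: GrDirect coalg Inverse} delivers at once that $\oplus_n\frac{E}{I^{n+1}}$ is a graded coalgebra, that $gr_I E$ carries a unique coalgebra structure making $\oplus_n\frac{i_{I^n}^E}{I^{n+1}}$ a coalgebra homomorphism, and that assertions 1) and 2) --- the latter being precisely (\ref{form: Sweed8Co}) --- hold verbatim. For 3) I would specialize the lemma's counit formula $\varepsilon_{gr_I E}=\varepsilon_0^C\circ i_{E_0}^{C_0}\circ p_0^{gr_I E}$ using $C_0=\frac{E}{I}=E_0=gr_I^0 E$, $i_{E_0}^{C_0}=\mathrm{Id}$ and $\varepsilon_0^C=\varepsilon_B$, which collapses it to $\varepsilon_{gr_I E}=\varepsilon_B\circ p_0^{gr_I E}$. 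I expect the only delicate point to be purely notational: keeping the index shift $C_n=\frac{E}{I^{n+1}}$ consistent with the decreasing-index kernel convention $E_n=\ker(\beta_{C_n}^{C_{n-1}})$ of Lemma \ref{lem: GrDirect coalg Inverse}, and verifying that the structural relations of Lemma \ref{lem: mLambdaCo} indeed match its hypotheses under this dictionary; once that dictionary is pinned down, the proposition follows by a direct invocation, just as in the algebra case.
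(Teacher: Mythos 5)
Your proposal is correct and is essentially the paper's own proof: the authors likewise obtain the statement by applying Lemma \ref{lem: GrDirect coalg Inverse} to the family $(\frac{E}{I^{n+1}})_{n\in\mathbb{N}}$, with the hypotheses verified via Lemma \ref{lem: mLambdaCo}. Your write-up merely makes explicit the dictionary and the identification of the kernels, which the paper leaves implicit.
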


\begin{proof}
By Lemma \ref{lem: mLambdaCo}, we can apply the Lemma \ref{lem: GrDirect
coalg Inverse} to the family $(\frac{E}{I^{n+1}})_{n\in
\mathbb{N}
}.$
\end{proof}

\begin{theorem}
\label{teo: gr_IE Bialg}Let $\left( \mathcal{M},c\right) $ be a cocomplete
and complete abelian coabelian braided monoidal category satisfying $AB5$.
Assume that the tensor product commutes with direct sums.

Let $\pi :E\rightarrow B$ be an epimorphism in $\mathcal{M}$ which is a
braided bialgebra homomorphism in $\mathcal{M}$ and let $\left(
I,i_{I}^{E}\right) :=\ker \left( \pi \right) .$ Assume that%
\begin{equation*}
\frac{E}{i_{I^{a+1}}^{I^{a}}}:\frac{E}{I^{a+1}}\rightarrow \frac{E}{I^{a}}
\end{equation*}%
is a split epimorphism for every $a\in
\mathbb{N}
$, where $i_{I^{a+1}}^{I^{a}}:I^{a+1}\rightarrow I^{a}$ is the canonical
injection.\newline
Then $gr_{I}E$ is a graded braided bialgebra in $\left( \mathcal{M},c\right)
$.
\end{theorem}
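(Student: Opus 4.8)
The plan is to dualize the proof of Theorem \ref{teo: gr_BE Bialg}, the present hypotheses being exactly those of Section \ref{sec: ass Grad Coalg Bialg} read in the dual monoidal category. First I would record the two half-structures carried by the single graded object $gr_IE=\oplus_{n\in\mathbb{N}}gr_I^nE$, where $gr_I^nE=\frac{I^n}{I^{n+1}}$. On the coalgebra side, Proposition \ref{pro: grE algCo} already equips $gr_IE$ with a graded coalgebra structure, with local comultiplications $\Delta_{a,b}^{gr_IE}$ subject to (\ref{form: Sweed8Co}) and counit $\varepsilon_{gr_IE}=\varepsilon_B\circ p_0^{gr_IE}$. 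On the algebra side, since $\pi$ is in particular an algebra homomorphism, $I=\ker(\pi)$ is an ideal of $E$, so Theorem \ref{teo: gr} applies with $A=E$ and puts on the very same graduation a (strongly $\mathbb{N}$-)graded algebra structure $m_{a,b}^{gr_IE}$ with unit $u_{gr_IE}=i_0^{gr_IE}\circ p_I^E\circ u$.

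With both structures available on the common graduation $\left(gr_I^nE\right)_{n\in\mathbb{N}}$, I would then invoke Lemma \ref{lem: graded braided Co}, so that it only remains to verify its two displayed compatibilities. The unit/counit compatibility $\Delta_{0,0}^{gr_IE}\circ u_0=(u_0\otimes u_0)\circ\Delta_{\mathbf{1}}$ is the easy one: the degree-zero piece is $gr_I^0E=\frac{E}{I}$, which inherits from $\pi$ the bialgebra structure of $B$, so $\Delta_{0,0}^{gr_IE}$ and $u_0^{gr_IE}$ are $\Delta_B$ and $u_B$ up to the canonical identification, and the identity is just the statement that $u_B$ is a coalgebra morphism.

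The substantial point, and the step I expect to be the main obstacle, is the mixed compatibility between $\Delta^{gr_IE}$ and $m^{gr_IE}$, that is, the first displayed equation of Lemma \ref{lem: graded braided Co}. I would obtain it by transcribing, into the quotient (epimorphism) setting, the computation of Theorem \ref{teo: gr_BE Bialg}. Since the structure maps of $gr_IE$ are pinned down through the projections $\frac{i_{I^n}^E}{I^{n+1}}$ and the maps $\Delta_\vee^{a,b}$ of Lemma \ref{lem: mLambdaCo}, I would reduce the desired identity, after pre-composing with the epimorphism $\gamma_a\otimes\gamma_b$ of the Notations and inserting the comparison maps $\alpha_n,\beta_n$, to an identity between two morphisms built out of $m_E$, $\Delta_E$ and the braiding $c_{E,E}$. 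The left-hand side is rewritten using (\ref{form: Sweed8Co}), (\ref{form: Sweed7,5Co}), the bialgebra compatibility between $\Delta_E$ and $m_E$, and (\ref{form: def alphaCo}); the right-hand side is rewritten by means of the auxiliary morphisms dual to the $\theta$'s used in the proof of Theorem \ref{teo: gr_BE Bialg}. The two expressions are finally matched by applying the key local formula (\ref{form: Sweed5,5Co}) of Lemma \ref{lem: mLambdaCo} together with naturality of the braiding. Everything is formal once the structures are in place; the genuine difficulty is the index bookkeeping over $s',t',s'',t''$ and the correct placement of the braiding $c_{E,E}$, exactly as in the monomorphism case.
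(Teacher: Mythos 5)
Your proposal is correct and follows exactly the route the paper takes: the paper's entire proof of this theorem is the one-line remark that it is analogous (dual) to Theorem \ref{teo: gr_BE Bialg}, and your sketch is precisely that dualization, assembling the coalgebra structure from Proposition \ref{pro: grE algCo} and the algebra structure from Theorem \ref{teo: gr}, then verifying the hypotheses of Lemma \ref{lem: graded braided Co} via the dual comparison maps $\alpha_n,\beta_n,\gamma_n$ and the formulas of Lemma \ref{lem: mLambdaCo}.
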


\begin{proof}
It is analogous to that of Theorem \ref{teo: gr_BE Bialg}.
\end{proof}

\begin{theorem}
\label{teo: typeOne CoAlg}Let $\left( \mathcal{M},c\right) $ be a cocomplete
and complete abelian coabelian braided monoidal category satisfying $AB5$.
Assume that the tensor product commutes with direct sums. Let $\pi
:E\rightarrow B$ be an epimorphism in $\mathcal{M}$ which is a braided
bialgebra homomorphism in $\mathcal{M}$ and let $\left( I,i_{I}^{E}\right)
:=\ker \left( \pi \right) .$ Assume that%
\begin{equation*}
\frac{E}{i_{I^{a+1}}^{I^{a}}}:\frac{E}{I^{a+1}}\rightarrow \frac{E}{I^{a}}
\end{equation*}%
is a split epimorphism for every $a\in
\mathbb{N}
$, where $i_{I^{a+1}}^{I^{a}}:I^{a+1}\rightarrow I^{a}$ is the canonical
injection. The following assertions are equivalent.

\begin{enumerate}
\item[$\left( 1\right) $] $gr_{I}E$ is the braided bialgebra of type one
associated to $B=\frac{E}{I}$ and $\frac{I}{I^{2}}$.

\item[$\left( 2\right) $] $gr_{I}E$ is strongly $%
\mathbb{N}
$-graded as a coalgebra.

\item[$\left( 3\right) $] $C=\oplus _{n\in \mathbb{N}}\frac{E}{I^{n+1}}$ is
strongly $%
\mathbb{N}
$-graded as a coalgebra.

\item[$\left( 4\right) $] $\left( I^{n+1},i_{I^{n+1}}^{E}\right) =\left(
I^{2}\right) ^{\wedge _{E}n}$ for every $n\geq 2.$
\end{enumerate}
\end{theorem}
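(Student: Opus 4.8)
The plan is to transpose, step by step, the proof of Theorem~\ref{teo: typeOne Alg} into the coalgebra setting, using the quotient filtration $\left( \frac{E}{I^{n+1}}\right) _{n\in \mathbb{N}}$ in place of the wedge filtration. All of the needed infrastructure is already available: Proposition~\ref{pro: grE algCo} makes $C=\oplus _{n\in \mathbb{N}}\frac{E}{I^{n+1}}$ a graded coalgebra, Theorem~\ref{teo: gr_IE Bialg} upgrades $gr_{I}E$ to a graded braided bialgebra, and Theorem~\ref{teo: strongly Alg co} compares the strongly graded condition for $C$ with that for the kernel object $gr_{I}E$.

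For $\left( 1\right) \Leftrightarrow \left( 2\right) $ I would note that, by Theorem~\ref{teo: gr_IE Bialg}, $gr_{I}E$ is a graded braided bialgebra whose degree-zero component is $B=\frac{E}{I}$ and whose degree-one component is $\frac{I}{I^{2}}$; the coalgebra counterpart of \cite[Theorem~6.8]{AM} (where $AB5$ is required) then says that such a bialgebra is of type one associated to its components in degrees $0$ and $1$ precisely when it is strongly $\mathbb{N}$-graded as a coalgebra. For $\left( 2\right) \Leftrightarrow \left( 3\right) $ I would apply Theorem~\ref{teo: strongly Alg co}: by Proposition~\ref{pro: grE algCo} the coalgebra $C$ is obtained from Lemma~\ref{lem: GrDirect coalg Inverse} applied to the inverse system $C_{n}=\frac{E}{I^{n+1}}$, whose transition kernels are $\ker \left( \frac{E}{I^{n+1}}\rightarrow \frac{E}{I^{n}}\right) =\frac{I^{n}}{I^{n+1}}=gr_{I}^{n}E$, so that $gr_{I}E$ is the object called ``$E$'' and $C$ the object called ``$C$'' in Theorem~\ref{teo: strongly Alg co}; that theorem delivers the equivalence at once.

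The substantive point is $\left( 3\right) \Leftrightarrow \left( 4\right) $, which I would handle as the precise dual of the corresponding part of Theorem~\ref{teo: typeOne Alg}. Writing $B=\frac{E}{I}$ and $C_{1}=\frac{E}{I^{2}}$, the universal property of the cotensor coalgebra produces a unique graded coalgebra homomorphism
\begin{equation*}
\psi :C=\oplus _{n\in \mathbb{N}}\frac{E}{I^{n+1}}\rightarrow T^{c}:=T_{B}^{c}\left( \frac{E}{I^{2}}\right)
\end{equation*}
coinciding with the canonical morphisms in degrees $0$ and $1$, with graded components $\psi _{n}:\frac{E}{I^{n+1}}\rightarrow \left( \frac{E}{I^{2}}\right) ^{\square _{B}n}$ (both $\psi _{0}$ and $\psi _{1}$ being isomorphisms). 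By \cite[Theorem~2.22]{AM}, assertion $\left( 3\right) $ is equivalent to $\psi _{n}$ being a monomorphism for every $n\geq 2$. I would then dualize the two inductive identities of Theorem~\ref{teo: typeOne Alg}: first the recursion dual to (\ref{form: phi}), relating $\psi _{n}$, the cotensor equalizer map and the map $\Delta _{\vee }^{n-1,1}$ of Lemma~\ref{lem: mLambdaCo}; and then, by induction and using (\ref{form: Sweed7,5Co}), the closed form dual to (\ref{form: phi 2}),
\begin{equation*}
\psi _{n}\circ p_{I^{n+1}}^{E}=\left( p_{I^{2}}^{E}\right) ^{\square _{B}n}\circ \overline{\Delta }_{E}^{\,n-1},
\end{equation*}
where $\overline{\Delta }_{E}^{\,n-1}$ is the iterated reduced comultiplication of $E$. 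Since $p_{I^{n+1}}^{E}$ is an epimorphism with kernel $I^{n+1}$, while the kernel of the right-hand side is, by the very definition of the wedge power, $\left( I^{2}\right) ^{\wedge _{E}n}$, the morphism $\psi _{n}$ is a monomorphism if and only if $\left( I^{n+1},i_{I^{n+1}}^{E}\right) =\left( I^{2}\right) ^{\wedge _{E}n}$, which is exactly $\left( 4\right) $.

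The main obstacle I anticipate is the faithful dualization of the identities (\ref{form: phi}) and (\ref{form: phi 2}): one must reverse every structure morphism, trade the iterated reduced multiplication for the iterated reduced comultiplication and the tensor powers of the inclusion $i_{B^{\wedge _{E}2}}^{E}$ for the cotensor powers of the projection $p_{I^{2}}^{E}$, and carry along the compatibilities collected in Lemma~\ref{lem: mLambdaCo} together with (\ref{form: def alphaCo}), so that the kernel of the right-hand side of the closed form is correctly identified with the wedge power $\left( I^{2}\right) ^{\wedge _{E}n}$ rather than merely with a cotensor power. Once these identities are secured, the equivalence ``$\psi _{n}$ is a monomorphism $\Leftrightarrow \left( 4\right) $'' follows formally from the kernel/cokernel duality, exactly mirroring the image/kernel argument in the algebra case.
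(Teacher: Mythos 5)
Your proposal follows the paper's own proof essentially step for step: the same three reductions ($\left(1\right)\Leftrightarrow\left(2\right)$ via \cite[Theorem 6.8]{AM}, $\left(2\right)\Leftrightarrow\left(3\right)$ via Theorem \ref{teo: strongly Alg co} applied to the inverse system $(\frac{E}{I^{n+1}})_{n\in\mathbb{N}}$ with kernels $gr_{I}^{n}E$, and $\left(3\right)\Leftrightarrow\left(4\right)$ via the canonical graded coalgebra map $\psi$ into $T_{\frac{E}{I}}^{c}(\frac{E}{I^{2}})$), the same two identities dual to (\ref{form: phi}) and (\ref{form: phi 2}), and the same final identification $\ker[(p_{I^{2}}^{E})^{\square_{B}n}\circ\overline{\Delta}_{E}^{n-1}]=\ker[(p_{I^{2}}^{E})^{\otimes n}\circ\Delta_{E}^{n-1}]=(I^{2})^{\wedge_{E}n}$ that you correctly flag as the delicate point. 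The approach is correct and coincides with the paper's.
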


\begin{proof}
Consider the graded coalgebra homomorphism
\begin{equation*}
\oplus _{n\in \mathbb{N}}\frac{i_{I^{n}}^{E}}{I^{n+1}}:gr_{I}E\rightarrow
\oplus _{n\in \mathbb{N}}\frac{E}{I^{n+1}}
\end{equation*}%
of Proposition \ref{pro: grE algCo}. Note that in view of $AB5$ condition,
this morphism is indeed a monomorphism.

$\left( 1\right) \Leftrightarrow \left( 2\right) $ It follows in view of
\cite[Theorem 6.8]{AM} (where $AB5$ is required) and by Theorem \ref{teo: gr}%
.

$\left( 2\right) \Leftrightarrow \left( 3\right) $ It follows by Theorem \ref%
{teo: strongly Alg co} that, by Lemma \ref{lem: mLambdaCo}, can be applied
to the family $(\frac{E}{I^{n+1}})_{n\in
\mathbb{N}
}.$

$\left( 3\right) \Leftrightarrow \left( 4\right) $ Let $\psi :\oplus _{n\in
\mathbb{N}}\frac{E}{I^{n+1}}\rightarrow T^{c}=T_{\frac{E}{I}}^{c}\left(
\frac{E}{I^{2}}\right) $ be the canonical morphism arising from the
universal property of the tensor algebra and let
\begin{equation*}
\psi _{n}:\frac{E}{I^{n+1}}\rightarrow \left( \frac{E}{I^{2}}\right)
^{\square _{B}n}
\end{equation*}%
be its graded $n$-th component. In view of \cite[Theorem 2.22]{AM}, $\left(
3\right) $ is equivalent to require that $\psi _{n}$ is an epimorphism for
every $n\geq 2$ (note that $\psi _{0}$ and $\psi _{1}$ are always
isomorphisms). Let us prove that%
\begin{equation}
\zeta _{\left( \frac{E}{I^{2}}\right) ^{\square _{B}n-1},\frac{E}{I^{2}}%
}\circ \psi _{n}=\left( \psi _{n-1}\otimes \frac{E}{I^{2}}\right) \circ
\Delta _{\vee }^{n-1,1},\text{ for every }n\geq 2.  \label{form: psi}
\end{equation}%
where $\zeta _{X,Y}:X\square _{B}Y\rightarrow X\otimes Y$ denotes the
canonical injection.

Note that, being $\psi $ a graded homomorphism and in view of \cite[Theorem
2.16 and Proposition 2.19]{AM}, one has%
\begin{equation*}
\psi _{n}\circ p_{n-1}^{C}=p_{n-1}^{T^{c}}\circ \psi =\left(
p_{1}^{C}\right) ^{\square _{B}n-1}\circ \overline{\Delta }_{C}^{n-2}
\end{equation*}%
so that%
\begin{eqnarray*}
&&\zeta _{\left( \frac{E}{I^{2}}\right) ^{\square _{B}n-1},\frac{E}{I^{2}}%
}\circ \psi _{n} \\
&=&\zeta _{\left( \frac{E}{I^{2}}\right) ^{\square _{B}n-1},\frac{E}{I^{2}}%
}\circ \left( p_{1}^{C}\right) ^{\square _{B}n}\circ \overline{\Delta }%
_{C}^{n-1}\circ i_{n}^{C}=\left[ \left( p_{1}^{C}\right) ^{\square
_{B}n-1}\circ \overline{\Delta }_{C}^{n-2}\otimes p_{1}^{C}\right] \circ
\Delta _{C}\circ i_{n}^{C} \\
&=&\left[ \left( \psi _{n-1}\circ p_{n-1}^{C}\right) \otimes p_{1}^{C}\right]
\circ \Delta _{C}\circ i_{n}^{C}=\left( \psi _{n-1}\otimes C_{1}\right)
\circ \left( p_{n-1}^{C}\otimes p_{1}^{C}\right) \circ \Delta _{C}\circ
i_{n}^{C} \\
&&\overset{\text{(\ref{form: coro grCoalg1})}}{=}\left( \psi _{n-1}\otimes
\frac{E}{I^{2}}\right) \circ \Delta _{n-1,1}^{C}\circ p_{n}^{C}\circ
i_{n}^{C}=\left( \psi _{n-1}\otimes \frac{E}{I^{2}}\right) \circ \Delta
_{\vee }^{n-1,1}.
\end{eqnarray*}%
Hence (\ref{form: psi}) holds. Let us prove by induction that%
\begin{equation}
\psi _{n}\circ p_{I^{n+1}}^{E}=\left( p_{I^{2}}^{E}\right) ^{\square
_{B}n}\circ \overline{\Delta }_{E}^{n-1},\text{ for every }n\geq 2.
\label{form: psi 2}
\end{equation}%
$n=2)$ We have%
\begin{eqnarray*}
&&\zeta _{\frac{E}{I^{2}},\frac{E}{I^{2}}}\circ \psi _{2}\circ p_{I^{3}}^{E}%
\overset{\text{(\ref{form: psi})}}{=}\left( \psi _{1}\otimes \frac{E}{I^{2}}%
\right) \circ \Delta _{\vee }^{1,1}\circ p_{I^{3}}^{E} \\
&=&\Delta _{\vee }^{1,1}\circ p_{I^{3}}^{E}\overset{\text{(\ref{form:
Sweed7,5Co})}}{=}\left( p_{I^{2}}^{E}\otimes p_{I^{2}}^{E}\right) \circ
\Delta _{E}=\zeta _{\frac{E}{I^{2}},\frac{E}{I^{2}}}\circ \left(
p_{I^{2}}^{E}\square _{B}p_{I^{2}}^{E}\right) \circ \overline{\Delta }_{E}
\end{eqnarray*}

$n-1\Rightarrow n)$ We have%
\begin{eqnarray*}
&&\zeta _{\left( \frac{E}{I^{2}}\right) ^{\square _{B}n-1},\frac{E}{I^{2}}%
}\circ \psi _{n}\circ p_{I^{n+1}}^{E}\overset{\text{(\ref{form: psi})}}{=}%
\left( \psi _{n-1}\otimes \frac{E}{I^{2}}\right) \circ \Delta _{\vee
}^{n-1,1}\circ p_{I^{n+1}}^{E} \\
&&\overset{\text{(\ref{form: Sweed7,5Co})}}{=}\left( \psi _{n-1}\otimes
\frac{E}{I^{2}}\right) \circ \left( p_{I^{n}}^{E}\otimes
p_{I^{2}}^{E}\right) \circ \Delta _{E} \\
&=&\left( \left( p_{I^{2}}^{E}\right) ^{\square _{B}n-1}\otimes
p_{I^{2}}^{E}\right) \circ \left( \overline{\Delta }_{E}^{n-2}\otimes \frac{E%
}{I^{2}}\right) \circ \Delta _{E}=\zeta _{\left( \frac{E}{I^{2}}\right)
^{\square _{B}n-1},\frac{E}{I^{2}}}\circ \left( p_{I^{2}}^{E}\right)
^{\square _{B}n}\circ \overline{\Delta }_{E}^{n-1}.
\end{eqnarray*}

We have so proved that (\ref{form: psi 2}) holds.

Now, if $\psi _{n}$ is a monomorphism, then
\begin{equation*}
\left( I^{n+1},i_{I^{n+1}}^{E}\right) =\ker \left( \psi _{n}\circ
p_{I^{n+1}}^{E}\right) =\mathrm{\ker }\left[ \left( p_{I^{2}}^{E}\right)
^{\square _{B}n}\circ \overline{\Delta }_{E}^{n-1}\right] =\mathrm{\ker }%
\left[ \left( p_{I^{2}}^{E}\right) ^{\otimes n}\circ \Delta _{E}^{n-1}\right]
=\left( I^{2}\right) ^{\wedge _{E}n}.
\end{equation*}%
Conversely, if $\left( I^{n+1},i_{I^{n+1}}^{E}\right) =\left( I^{2}\right)
^{\wedge _{E}n},$ then $\left( p_{I^{2}}^{E}\right) ^{\square _{B}n}\circ
\overline{\Delta }_{E}^{n-1}$ factors to a monomorphism%
\begin{equation*}
\psi _{n}^{\prime }:\frac{E}{I^{n+1}}\rightarrow \left( \frac{E}{I^{2}}%
\right) ^{\square _{B}n}
\end{equation*}%
such that $\psi _{n}^{\prime }\circ p_{I^{n+1}}^{E}=\left(
p_{I^{2}}^{E}\right) ^{\square _{B}n}\circ \overline{\Delta }_{E}^{n-1}.$ In
view of (\ref{form: psi 2}) and since $p_{I^{n+1}}^{E}$ is an epimorphism,
we get that $\psi _{n}=\psi _{n}^{\prime }$.
\end{proof}

\begin{corollary}
Let $J$ an ideal of a bialgebra $E$ over a field $K$ and assume that $J$ is
also a coideal. The following assertions are equivalent.

\begin{enumerate}
\item[$\left( 1\right) $] $gr_{J}E$ is the bialgebra of type one associated
to $\frac{E}{J}$ and $\frac{J}{J^{2}}$.

\item[$\left( 2\right) $] $gr_{J}E$ is strongly $%
\mathbb{N}
$-graded as a coalgebra.

\item[$\left( 3\right) $] $\oplus _{n\in \mathbb{N}}\frac{E}{J^{n+1}}$ is
strongly $%
\mathbb{N}
$-graded as a coalgebra.

\item[$\left( 4\right) $] $J^{n+1}=\left( J^{2}\right) ^{\wedge _{E}n}$ for
every $n\geq 2.$
\end{enumerate}
\end{corollary}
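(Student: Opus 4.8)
The plan is to derive this corollary as the special case of Theorem \ref{teo: typeOne CoAlg} obtained by taking as ambient braided monoidal category the category of vector spaces over the field $K$, exactly as Corollary \ref{coro: pre-Lift} was obtained from Theorem \ref{teo: typeOne Alg}. So the first step is to set $\left( \mathcal{M},c\right) =\left( \mathfrak{Vec}\left( K\right) ,\tau \right)$, where $\tau$ is the canonical flip, and to observe that this category meets all the standing hypotheses of Theorem \ref{teo: typeOne CoAlg}: it is abelian, coabelian, cocomplete and complete, it satisfies $AB5$, the braiding $\tau$ makes it a braided monoidal category, and the tensor product over $K$ commutes with arbitrary direct sums.

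Next I would translate the data of the corollary into the data required by the theorem. Since $J$ is simultaneously an ideal and a coideal of the bialgebra $E$, the quotient $\frac{E}{J}$ inherits a bialgebra structure and the canonical projection $\pi :E\rightarrow \frac{E}{J}$ is a bialgebra homomorphism which is an epimorphism in $\mathfrak{Vec}\left( K\right)$. Setting $B:=\frac{E}{J}$ we have $\left( I,i_{I}^{E}\right) :=\ker \left( \pi \right) =\left( J,i_{J}^{E}\right)$, so that $I^{n}=J^{n}$ for every $n$ and the statements $\left( 1\right) $--$\left( 4\right) $ of the corollary coincide verbatim with those of Theorem \ref{teo: typeOne CoAlg} after this identification (in particular condition $\left( 4\right) $ reads $J^{n+1}=\left( J^{2}\right) ^{\wedge _{E}n}$, matching $\left( I^{n+1},i_{I^{n+1}}^{E}\right) =\left( I^{2}\right) ^{\wedge _{E}n}$).

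The only hypothesis of Theorem \ref{teo: typeOne CoAlg} that is not purely structural is the requirement that $\frac{E}{i_{I^{a+1}}^{I^{a}}}:\frac{E}{I^{a+1}}\rightarrow \frac{E}{I^{a}}$ be a split epimorphism for every $a\in \mathbb{N}$, and this is precisely the point I would highlight as the place where one must appeal to the special nature of $\mathfrak{Vec}\left( K\right)$. Here the argument is immediate rather than obstructive: over a field every short exact sequence of vector spaces splits, hence every epimorphism in $\mathfrak{Vec}\left( K\right)$ is a split epimorphism, so the condition holds automatically. With all hypotheses in force, Theorem \ref{teo: typeOne CoAlg} yields the equivalence of $\left( 1\right) $--$\left( 4\right) $, which completes the proof.
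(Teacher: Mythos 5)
Your proposal is correct and follows exactly the paper's own route: the paper's proof is the one-line application of Theorem \ref{teo: typeOne CoAlg} to $\left( \mathfrak{Vec}\left( K\right) ,\tau \right)$, and your additional verifications (identifying $I=\ker(\pi)=J$ and noting that the split-epimorphism hypothesis is automatic over a field) are precisely the implicit details the paper leaves to the reader.
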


\begin{proof}
We apply Theorem \ref{teo: typeOne CoAlg} to the case $\left( \mathcal{M}%
,c\right) =\left( \mathfrak{Vec}\left( K\right) ,\tau \right) $ where $\tau $
is the canonical flip.
\end{proof}

\appendix

\section{Technicalities}

\label{sec: techn}

\begin{proof}[Proof of Lemma \protect\ref{lem: mLambda}]
. Set $i_{n}:=i_{B^{\wedge _{E}a}}^{E}$ and $p_{n}:=p_{B^{\wedge
_{E}n}}^{E}. $

0) It follows by using (\ref{form: def gamma}) and naturality of $c.$

1) Let us prove by induction on $u+v\geq 1$ that $p_{u+v-1}m_{E}\left(
i_{u}\otimes i_{v}\right) =0.$

$u+v=1)$ is trivial as $i_{0}=0$.

$u+v>1)$ If $u=0$ or $v=0$ there is nothing to prove. Let $u,v>0$. Assume
that the statement is true for every $i,j$ such that $1\leq i+j<u+v$ and let
us prove it for $u+v.$ First of all we will prove that%
\begin{equation}
\left( p_{u-1}\otimes p_{v}\right) \circ \Delta _{E}\circ m_{E}\circ \left(
i_{u}\otimes i_{v}\right) =0.  \label{form:
step1}
\end{equation}%
Using the compatibility of $\Delta _{E}$ and $m_{E},$ and (\ref{form: def
alpha}) we get%
\begin{eqnarray*}
&&\left( p_{u-1}\otimes p_{v}\right) \circ \Delta _{E}\circ m_{E}\circ
\left( i_{u}\otimes i_{v}\right) \\
&=&\left( p_{u-1}\otimes p_{v}\right) \circ \left( m_{E}\otimes m_{E}\right)
\circ \left( E\otimes c\otimes E\right) \circ \left( \beta _{u}\otimes \beta
_{v}\right) \circ \left( \alpha _{u}\otimes \alpha _{v}\right)
\end{eqnarray*}%
Let us prove that
\begin{equation*}
\left( p_{u-1}\otimes p_{v}\right) \circ \left( m_{E}\otimes m_{E}\right)
\circ \left( E\otimes c\otimes E\right) \circ \left( \beta _{u}\otimes \beta
_{v}\right) =0.
\end{equation*}%
Since $\gamma _{u}\otimes \gamma _{v}$ is an epimorphism, this is equivalent
to prove that%
\begin{equation*}
\left( p_{u-1}\otimes p_{v}\right) \circ \left( m_{E}\otimes m_{E}\right)
\circ \left( E\otimes c\otimes E\right) \circ \left( \beta _{u}\otimes \beta
_{v}\right) \circ \left( \gamma _{u}\otimes \gamma _{v}\right) =0.
\end{equation*}%
We have%
\begin{eqnarray*}
&&\left( p_{u-1}\otimes p_{v}\right) \circ \left( m_{E}\otimes m_{E}\right)
\circ \left( E\otimes c\otimes E\right) \circ \left( \beta _{u}\otimes \beta
_{v}\right) \circ \left( \gamma _{u}\otimes \gamma _{v}\right) \\
&&\overset{\text{(\ref{form: Sweed5,5})}}{=}\nabla \left[ \left( \left( %
\left[ p_{u-1}m_{E}\left( i_{a}\otimes i_{b}\right) \right] \otimes \left[
p_{v}m_{E}\left( i_{c}\otimes i_{d}\right) \right] \right) \circ \left(
B^{\wedge _{E}a}\otimes c_{B^{\wedge _{E}c},B^{\wedge _{E}b}}\otimes
B^{\wedge _{E}d}\right) \right) _{\substack{ a+c=u+1  \\ b+d=v+1}}\right]
\end{eqnarray*}%
Note that $\left( a+b\right) +\left( c+d\right) =\left( a+c\right) +\left(
b+d\right) =u+v+2.$

If $u-1\geq a+b-1$ then $a+b\leq u<u+v$ and we have%
\begin{equation*}
p_{u-1}m_{E}\left( i_{a}\otimes i_{b}\right) =\frac{E}{\xi _{a+b-1}^{u-1}}%
p_{a+b-1}m_{E}\left( i_{a}\otimes i_{b}\right) =0.
\end{equation*}%
If $u-1<a+b-1,$ and $v<c+d-1,$ then $u+v\leq a+b+c+d-3=u+v-1.$ A
contradiction.

Then $v\geq c+d-1.$ Thus $c+d<v\leq u+v$ so that, as above, we get $%
p_{v}m_{E}\left( i_{c}\otimes i_{d}\right) =0.$

Hence%
\begin{equation*}
\left( p_{u-1}\otimes p_{v}\right) \circ \left( m_{E}\otimes m_{E}\right)
\circ \left( E\otimes c\otimes E\right) \circ \left( \beta _{u}\otimes \beta
_{v}\right) \circ \left( \gamma _{u}\otimes \gamma _{v}\right) =0
\end{equation*}%
and so (\ref{form: step1}) holds.

Let $\Delta _{u-1,v}:=\Delta _{B^{\wedge _{E}u-1},B^{\wedge _{E}v}}=\left(
p_{u-1}\otimes p_{v}\right) \circ \Delta _{E}.$ Then, as seen in \ref{claim:
wedge}, there exists a unique morphism
\begin{equation*}
\overline{\Delta }_{u-1,v}:\frac{E}{B^{\wedge _{E}u+v-1}}=\frac{E}{B^{\wedge
_{E}u-1}\wedge _{E}B^{\wedge _{E}v}}\rightarrow \frac{E}{B^{\wedge _{E}u-1}}%
\otimes \frac{E}{B^{\wedge _{E}v}}
\end{equation*}%
such that $\overline{\Delta }_{u-1,v}\circ p_{u+v-1}=\Delta _{u-1,v}.$
Furthermore $\overline{\Delta }_{u-1,v}$ is a monomorphism. From%
\begin{equation*}
\overline{\Delta }_{u-1,v}\circ \left[ p_{u+v-1}\circ m_{E}\circ \left(
i_{u}\otimes i_{v}\right) \right] =\Delta _{u-1,v}\circ m_{E}\circ \left(
i_{u}\otimes i_{v}\right) \overset{\text{(\ref{form: step1})}}{=}0
\end{equation*}%
we deduce $p_{u+v-1}\circ m_{E}\circ \left( i_{u}\otimes i_{v}\right) =0$ so
that we have proved (\ref{form: Sweed6,5}).

2) From (\ref{form: Sweed6,5}), we get $p_{a+b+1}m_{E}\left( i_{a+1}\otimes
i_{b+1}\right) =0$ for every $a,b\in
\mathbb{N}
.$ By the universal property of the kernel there exists a unique morphism $%
m_{\wedge }^{a,b}:B^{\wedge _{E}a+1}\otimes B^{\wedge _{E}b+1}\rightarrow
B^{\wedge _{E}a+b+1}$ such that (\ref{form: Sweed7,5}) holds.

3) From (\ref{form: Sweed7,5}), we get%
\begin{eqnarray*}
i_{a+b+c+1}\circ m_{\wedge }^{a+b,c}\circ \left( m_{\wedge }^{a,b}\otimes
B^{\wedge _{E}c+1}\right) &=&m_{E}\left( m_{E}\otimes E\right) \left(
i_{a+1}\otimes i_{b+1}\otimes i_{c+1}\right) \\
i_{a+b+c+1}\circ m_{\wedge }^{a,b+c}\circ \left( B^{\wedge _{E}a+1}\otimes
m_{\wedge }^{b,c}\right) &=&m_{E}\left( E\otimes m_{E}\right) \left(
i_{a+1}\otimes i_{b+1}\otimes i_{c+1}\right)
\end{eqnarray*}%
so that, by associativity of $m_{E},$ we obtain
\begin{equation*}
i_{a+b+c+1}\circ m_{\wedge }^{a+b,c}\circ \left( m_{\wedge }^{a,b}\otimes
B^{\wedge _{E}c+1}\right) =i_{a+b+c+1}\circ m_{\wedge }^{a,b+c}\circ \left(
B^{\wedge _{E}a+1}\otimes m_{\wedge }^{b,c}\right) .
\end{equation*}%
Since $i_{a+b+c+1}$ is a monomorphism, we deduce that%
\begin{equation*}
m_{\wedge }^{a+b,c}\circ \left( m_{\wedge }^{a,b}\otimes B^{\wedge
_{E}c+1}\right) =m_{\wedge }^{a,b+c}\circ \left( B^{\wedge _{E}a+1}\otimes
m_{\wedge }^{b,c}\right) .
\end{equation*}%
On the other hand, by applying (\ref{form: Sweed7,5}), we infer that%
\begin{equation*}
i_{d+1}\circ m_{\wedge }^{d,0}\circ \left( B^{\wedge _{E}d+1}\otimes
u_{B}\right) =m_{E}\left( E\otimes u_{E}\right) \left( i_{d+1}\otimes
\mathbf{1}\right) =r_{E}\left( i_{d+1}\otimes \mathbf{1}\right)
=i_{d+1}\circ r_{B^{\wedge _{E}d+1}}
\end{equation*}%
so that, since $i_{d+1}$ is a monomorphism, we obtain $m_{\wedge
}^{d,0}\circ \left( B^{\wedge _{E}d+1}\otimes u_{B}\right) =r_{B^{\wedge
_{E}d+1}}.$ Similarly we prove that $m_{\wedge }^{0,d}\circ \left(
u_{B}\otimes B^{\wedge _{E}d+1}\right) =l_{B^{\wedge _{E}d+1}}$. We have%
\begin{eqnarray*}
&&i_{a+b+2}\circ m_{\wedge }^{a+1,b}\circ \left( i_{B^{\wedge
_{E}a+1}}^{B^{\wedge _{E}a+2}}\otimes B^{\wedge _{E}b+1}\right) =m_{E}\circ
\left( i_{a+2}\otimes i_{b+1}\right) \circ \left( i_{B^{\wedge
_{E}a+1}}^{B^{\wedge _{E}a+2}}\otimes B^{\wedge _{E}b+1}\right) \\
&=&m_{E}\circ \left( i_{a+1}\otimes i_{b+1}\right) =i_{a+b+1}\circ m_{\wedge
}^{a,b}=i_{a+b+2}\circ i_{B^{\wedge _{E}a+b+1}}^{B^{\wedge _{E}a+b+2}}\circ
m_{\wedge }^{a,b}.
\end{eqnarray*}%
Since $i_{a+b+2}$ is a monomorphism, we deduce that $m_{\wedge
}^{a+1,b}\circ \left( i_{B^{\wedge _{E}a+1}}^{B^{\wedge _{E}a+2}}\otimes
B^{\wedge _{E}b+1}\right) =i_{B^{\wedge _{E}a+b+1}}^{B^{\wedge
_{E}a+b+2}}\circ m_{\wedge }^{a,b}.$ The right hand version of this formula
follows by similar arguments.
\end{proof}

\begin{lemma}
There exists a unique morphism%
\begin{equation*}
\theta _{a,b}:\sum_{u+v=a+b+2}B^{\wedge _{E}u}\otimes B^{\wedge
_{E}v}\rightarrow \frac{B^{\wedge _{E}a+1}}{B^{\wedge _{E}a}}\otimes \frac{%
B^{\wedge _{E}b+1}}{B^{\wedge _{E}b}}
\end{equation*}%
such that%
\begin{equation}
\left( p_{B^{\wedge _{E}a}}^{B^{\wedge _{E}a+1}}\otimes p_{B^{\wedge
_{E}b}}^{B^{\wedge _{E}b+1}}\right) \circ \beta _{a+b+1}=\left( \frac{%
i_{B^{\wedge _{E}a+1}}^{E}}{B^{\wedge _{E}a}}\otimes \frac{i_{B^{\wedge
_{E}b+1}}^{E}}{B^{\wedge _{E}b}}\right) \circ \theta _{a,b}.
\label{form: teta1}
\end{equation}%
Moreover, for every $a,b\in
\mathbb{N}
$, we have%
\begin{equation}
\theta _{a,b}\circ \alpha _{a+b+1}=\Delta _{a,b}^{gr_{B}E}\circ p_{B^{\wedge
_{E}a+b}}^{B^{\wedge _{E}a+b+1}},  \label{form: def theta}
\end{equation}%
\begin{equation}
\theta _{a,b}\circ \gamma _{a+b+1}=\nabla \left[ \left( \delta
_{u,a+1}\delta _{v,b+1}p_{B^{\wedge _{E}a}}^{B^{\wedge _{E}a+1}}\otimes
p_{B^{\wedge _{E}b}}^{B^{\wedge _{E}b+1}}\right) _{u+v=a+b+2}\right] .
\label{form: teta2}
\end{equation}
\end{lemma}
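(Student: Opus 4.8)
The plan is to build $\theta _{a,b}$ as a factorization through a monomorphism and then to read off the three identities by cancelling monomorphisms and epimorphisms. Set $\iota _{a,b}:=\frac{i_{B^{\wedge _{E}a+1}}^{E}}{B^{\wedge _{E}a}}\otimes \frac{i_{B^{\wedge _{E}b+1}}^{E}}{B^{\wedge _{E}b}}$, where each $\frac{i_{B^{\wedge _{E}a+1}}^{E}}{B^{\wedge _{E}a}}:\frac{B^{\wedge _{E}a+1}}{B^{\wedge _{E}a}}\to \frac{E}{B^{\wedge _{E}a}}$ is a monomorphism of the same type as in Theorem \ref{teo: gr dual}. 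Since the tensor functors are left exact in the coabelian category $\mathcal{M}$, $\iota _{a,b}$ is again a monomorphism; this alone yields the uniqueness of $\theta _{a,b}$ and all the cancellations below.

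First I would compute the composite $\left( p_{B^{\wedge _{E}a}}^{E}\otimes p_{B^{\wedge _{E}b}}^{E}\right) \circ \beta _{a+b+1}$, whose target is $\frac{E}{B^{\wedge _{E}a}}\otimes \frac{E}{B^{\wedge _{E}b}}$, after precomposing with the epimorphism $\gamma _{a+b+1}$. By (\ref{form: def gamma}) the $(u,v)$-summand of the result is $\left( p_{B^{\wedge _{E}a}}^{E}i_{B^{\wedge _{E}u}}^{E}\right) \otimes \left( p_{B^{\wedge _{E}b}}^{E}i_{B^{\wedge _{E}v}}^{E}\right) $. Here $p_{B^{\wedge _{E}a}}^{E}i_{B^{\wedge _{E}u}}^{E}=0$ whenever $u\leq a$ and $p_{B^{\wedge _{E}b}}^{E}i_{B^{\wedge _{E}v}}^{E}=0$ whenever $v\leq b$; since $u+v=a+b+2$ every index satisfies $u\leq a$, or $v\leq b$, or $(u,v)=(a+1,b+1)$, so only the last survives. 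For it the quotient relation of the Notations section gives $p_{B^{\wedge _{E}a}}^{E}i_{B^{\wedge _{E}a+1}}^{E}=\frac{i_{B^{\wedge _{E}a+1}}^{E}}{B^{\wedge _{E}a}}\circ p_{B^{\wedge _{E}a}}^{B^{\wedge _{E}a+1}}$, so that, writing $\Theta $ for the right-hand side of (\ref{form: teta2}),
\[
\left( p_{B^{\wedge _{E}a}}^{E}\otimes p_{B^{\wedge _{E}b}}^{E}\right) \circ \beta _{a+b+1}\circ \gamma _{a+b+1}=\iota _{a,b}\circ \Theta .
\]

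From this displayed equality the rest is formal. As $\gamma _{a+b+1}$ is an epimorphism and $\iota _{a,b}$ a monomorphism, $\left( p_{B^{\wedge _{E}a}}^{E}\otimes p_{B^{\wedge _{E}b}}^{E}\right) \circ \beta _{a+b+1}$ itself factors through $\iota _{a,b}$: forming the pullback of $\iota _{a,b}$ along that morphism, its projection to the domain is a monomorphism (pullback of a monomorphism) and an epimorphism (because $\gamma _{a+b+1}$ factors through it), hence an isomorphism. The resulting factorization is $\theta _{a,b}$ and is exactly (\ref{form: teta1}), unique because $\iota _{a,b}$ is mono. Precomposing (\ref{form: teta1}) with $\gamma _{a+b+1}$ and comparing with the display gives $\iota _{a,b}\circ \theta _{a,b}\circ \gamma _{a+b+1}=\iota _{a,b}\circ \Theta $, whence (\ref{form: teta2}) upon cancelling $\iota _{a,b}$.

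For (\ref{form: def theta}) I would precompose (\ref{form: teta1}) with $\alpha _{a+b+1}$. By (\ref{form: def alpha}), $\beta _{a+b+1}\circ \alpha _{a+b+1}=\Delta _{E}\circ i_{B^{\wedge _{E}a+b+1}}^{E}$, and by the definition of $\Delta _{B^{\wedge _{E}a},B^{\wedge _{E}b}}$ in Claim \ref{claim: wedge} this gives $\iota _{a,b}\circ \theta _{a,b}\circ \alpha _{a+b+1}=\Delta _{B^{\wedge _{E}a},B^{\wedge _{E}b}}\circ i_{B^{\wedge _{E}a+b+1}}^{E}$. Using $\Delta _{B^{\wedge _{E}a},B^{\wedge _{E}b}}=\overline{\Delta }_{B^{\wedge _{E}a},B^{\wedge _{E}b}}\circ p_{B^{\wedge _{E}a+b}}^{E}$ (Claim \ref{claim: wedge}, with $B^{\wedge _{E}a}\wedge _{E}B^{\wedge _{E}b}=B^{\wedge _{E}a+b}$), the quotient relation for $p_{B^{\wedge _{E}a+b}}^{E}\circ i_{B^{\wedge _{E}a+b+1}}^{E}$, and (\ref{form: Delta grCo}) (which identifies $\overline{\Delta }_{B^{\wedge _{E}a},B^{\wedge _{E}b}}\circ \frac{i_{B^{\wedge _{E}a+b+1}}^{E}}{B^{\wedge _{E}a+b}}$ with $\iota _{a,b}\circ \Delta _{a,b}^{gr_{B}E}$), the right-hand side becomes $\iota _{a,b}\circ \Delta _{a,b}^{gr_{B}E}\circ p_{B^{\wedge _{E}a+b}}^{B^{\wedge _{E}a+b+1}}$; cancelling the monomorphism $\iota _{a,b}$ yields (\ref{form: def theta}). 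The one genuinely delicate step is the combinatorial vanishing that isolates the single index $(a+1,b+1)$; everything else is just cancellation of $\iota _{a,b}$ and $\gamma _{a+b+1}$.
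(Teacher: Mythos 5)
Your argument is correct, and it rests on the same two pillars as the paper's: the vanishing $p_{B^{\wedge _{E}s}}^{E}\circ i_{B^{\wedge _{E}t}}^{E}=0$ for $s\geq t$, exploited after precomposing with the epimorphism $\gamma _{a+b+1}$ via (\ref{form: def gamma}) so that only the $(u,v)=(a+1,b+1)$ summand survives, and cancellation of the monomorphism $\iota _{a,b}=\frac{i_{B^{\wedge _{E}a+1}}^{E}}{B^{\wedge _{E}a}}\otimes \frac{i_{B^{\wedge _{E}b+1}}^{E}}{B^{\wedge _{E}b}}$ to extract (\ref{form: def theta}) and (\ref{form: teta2}); your derivation of (\ref{form: def theta}) via (\ref{form: def alpha}), Claim \ref{claim: wedge} and (\ref{form: Delta grCo}) is the paper's computation verbatim. (You also silently read the left-hand side of (\ref{form: teta1}) as $\left( p_{B^{\wedge _{E}a}}^{E}\otimes p_{B^{\wedge _{E}b}}^{E}\right) \circ \beta _{a+b+1}$, which is the only reading that typechecks against $\beta _{a+b+1}$ and is what the paper actually establishes.) The one genuine organizational difference is the construction of the factorization itself: the paper proceeds in two stages, each time tensoring a short exact sequence of the form $0\rightarrow \frac{B^{\wedge _{E}b+1}}{B^{\wedge _{E}b}}\rightarrow \frac{E}{B^{\wedge _{E}b}}\rightarrow \frac{E}{B^{\wedge _{E}b+1}}\rightarrow 0$ with a fixed object, checking that the composite into the right-hand term vanishes, and invoking the universal property of the kernel (so an intermediate morphism $\theta _{a,b}^{\prime }$ appears, and (\ref{form: teta2}) must then be verified by a separate, essentially identical, computation). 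You instead compute $\left( p_{B^{\wedge _{E}a}}^{E}\otimes p_{B^{\wedge _{E}b}}^{E}\right) \circ \beta _{a+b+1}\circ \gamma _{a+b+1}=\iota _{a,b}\circ \Theta$ once and for all and then factor through the monomorphism by the pullback argument (equivalently: the cokernel of $\iota _{a,b}$ kills this composite, and $\gamma _{a+b+1}$ is epi, so it kills $\left( p_{B^{\wedge _{E}a}}^{E}\otimes p_{B^{\wedge _{E}b}}^{E}\right) \circ \beta _{a+b+1}$, which therefore lifts to the kernel $\iota _{a,b}$). This is slightly more economical, since (\ref{form: teta2}) then drops out for free, and it uses exactly the same hypotheses (left exactness and additivity of the tensor functors, $\gamma _{a+b+1}$ epi), so nothing is lost in generality.
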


\begin{proof}
Set $i_{n}:=i_{B^{\wedge _{E}a}}^{E}$, $p_{n}:=p_{B^{\wedge _{E}n}}^{E}$ and
$p_{n}^{n+1}:=p_{B^{\wedge _{E}n}}^{B^{\wedge _{E}n+1}}.$

Since $p_{s}i_{t}=0$ for every $s\geq t$, we have%
\begin{equation*}
\left( \frac{E}{B^{\wedge _{E}a}}\otimes \frac{E}{\xi _{b}^{b+1}}\right)
\circ \left( p_{a}\otimes p_{b}\right) \circ \beta _{a+b+1}\circ \gamma
_{a+b+1}\overset{\text{(\ref{form: def gamma})}}{=}\nabla \left[ \left[
\left( p_{a}i_{u}\otimes p_{b+1}i_{v}\right) \right] _{u+v=a+b+2}\right] =0.
\end{equation*}%
Since $\gamma _{a+b+1}$ is an epimorphism, we get%
\begin{equation}
\left( \frac{E}{B^{\wedge _{E}a}}\otimes \frac{E}{\xi _{b}^{b+1}}\right)
\circ \left( p_{a}\otimes p_{b}\right) \circ \beta _{a+b+1}=0.
\label{form: mah1}
\end{equation}%
By the universal property of the kernel applied to the exact sequence,%
\begin{equation*}
0\rightarrow \frac{E}{B^{\wedge _{E}a}}\otimes \frac{B^{\wedge _{E}b+1}}{%
B^{\wedge _{E}b}}\overset{\frac{E}{B^{\wedge _{E}a}}\otimes \frac{i_{b+1}}{%
B^{\wedge _{E}b}}}{\longrightarrow }\frac{E}{B^{\wedge _{E}a}}\otimes \frac{E%
}{B^{\wedge _{E}b}}\overset{\frac{E}{B^{\wedge _{E}a}}\otimes \frac{E}{\xi
_{b}^{b+1}}}{\longrightarrow }\frac{E}{B^{\wedge _{E}a}}\otimes \frac{E}{%
B^{\wedge _{E}b+1}}\rightarrow 0.
\end{equation*}%
there exists a unique morphism
\begin{equation*}
\theta _{a,b}^{\prime }:\sum_{u+v=a+b+2}B^{\wedge _{E}u}\otimes B^{\wedge
_{E}v}\rightarrow \frac{E}{B^{\wedge _{E}a}}\otimes \frac{B^{\wedge _{E}b+1}%
}{B^{\wedge _{E}b}}
\end{equation*}%
such that%
\begin{equation*}
\left( \frac{E}{B^{\wedge _{E}a}}\otimes \frac{i_{b+1}}{B^{\wedge _{E}b}}%
\right) \circ \theta _{a,b}^{\prime }=\left( p_{a}\otimes p_{b}\right) \circ
\beta _{a+b+1}.
\end{equation*}%
Now%
\begin{equation*}
\left( \frac{E}{B^{\wedge _{E}a+1}}\otimes \frac{i_{b+1}}{B^{\wedge _{E}b}}%
\right) \circ \left( \frac{E}{\xi _{a}^{a+1}}\otimes \frac{B^{\wedge _{E}b+1}%
}{B^{\wedge _{E}b}}\right) \circ \theta _{a,b}^{\prime }=\left( \frac{E}{\xi
_{a}^{a+1}}\otimes \frac{E}{B^{\wedge _{E}b}}\right) \circ \left(
p_{a}\otimes p_{b}\right) \circ \beta _{a+b+1}=0
\end{equation*}%
where the last equality follows analogously to (\ref{form: mah1}). Since $%
\frac{E}{B^{\wedge _{E}a+1}}\otimes \frac{i_{b+1}}{B^{\wedge _{E}b}}$ is a
monomorphism we get $\left( \frac{E}{\xi _{a}^{a+1}}\otimes \frac{B^{\wedge
_{E}b+1}}{B^{\wedge _{E}b}}\right) \circ \theta _{a,b}^{\prime }=0.$ By the
universal property of the kernel applied to the exact sequence,%
\begin{equation*}
0\rightarrow \frac{B^{\wedge _{E}a+1}}{B^{\wedge _{E}a}}\otimes \frac{%
B^{\wedge _{E}b+1}}{B^{\wedge _{E}b}}\overset{\frac{i_{a+1}}{B^{\wedge _{E}a}%
}\otimes \frac{B^{\wedge _{E}b+1}}{B^{\wedge _{E}b}}}{\longrightarrow }\frac{%
E}{B^{\wedge _{E}a}}\otimes \frac{B^{\wedge _{E}b+1}}{B^{\wedge _{E}b}}%
\overset{\frac{E}{\xi _{a}^{a+1}}\otimes \frac{B^{\wedge _{E}b+1}}{B^{\wedge
_{E}b}}}{\longrightarrow }\frac{E}{B^{\wedge _{E}a+1}}\otimes \frac{%
B^{\wedge _{E}b+1}}{B^{\wedge _{E}b}}.
\end{equation*}%
there exists a unique morphism
\begin{equation*}
\theta _{a,b}:\sum_{u+v=a+b+2}B^{\wedge _{E}u}\otimes B^{\wedge
_{E}v}\rightarrow \frac{B^{\wedge _{E}a+1}}{B^{\wedge _{E}a}}\otimes \frac{%
B^{\wedge _{E}b+1}}{B^{\wedge _{E}b}}
\end{equation*}%
such that%
\begin{equation*}
\left( \frac{i_{a+1}}{B^{\wedge _{E}a}}\otimes \frac{B^{\wedge _{E}b+1}}{%
B^{\wedge _{E}b}}\right) \circ \theta _{a,b}=\theta _{a,b}^{\prime }.
\end{equation*}%
Thus (\ref{form: teta1}) holds true.

Let us prove (\ref{form: def theta}). We have%
\begin{equation*}
\left( \frac{i_{a+1}}{B^{\wedge _{E}a}}\otimes \frac{i_{b+1}}{B^{\wedge
_{E}b}}\right) \circ \theta _{a,b}\circ \alpha _{a+b+1}=\left( p_{a}\otimes
p_{b}\right) \circ \beta _{a+b+1}\circ \alpha _{a+b+1}=\left( p_{a}\otimes
p_{b}\right) \circ \Delta _{E}\circ i_{a+b+1}.
\end{equation*}%
On the other hand, in view of definition of $\Delta _{a,b}^{B}$ (see \ref%
{claim: wedge}), we have%
\begin{eqnarray*}
&&\left( \frac{i_{a+1}}{B^{\wedge _{E}a}}\otimes \frac{i_{b+1}}{B^{\wedge
_{E}b}}\right) \circ \Delta _{a,b}^{gr_{B}E}\circ p_{a+b}^{a+b+1}\overset{%
\text{(\ref{form: Delta grCo})}}{=}\Delta _{a,b}^{B}\circ \frac{i_{a+b+1}}{%
B^{\wedge _{E}a+b}}\circ p_{a+b}^{a+b+1} \\
&=&\Delta _{a,b}^{B}\circ p_{a+b}\circ i_{a+b+1}=\left( p_{a}\otimes
p_{b}\right) \circ \Delta _{E}\circ i_{a+b+1}.
\end{eqnarray*}%
Hence%
\begin{equation*}
\left( \frac{i_{a+1}}{B^{\wedge _{E}a}}\otimes \frac{i_{b+1}}{B^{\wedge
_{E}b}}\right) \circ \theta _{a,b}\circ \alpha _{a+b+1}=\left( \frac{i_{a+1}%
}{B^{\wedge _{E}a}}\otimes \frac{i_{b+1}}{B^{\wedge _{E}b}}\right) \circ
\Delta _{a,b}^{gr_{B}E}\circ p_{a+b}^{a+b+1}.
\end{equation*}%
Since $\frac{i_{a+1}}{B^{\wedge _{E}a}}\otimes \frac{i_{b+1}}{B^{\wedge
_{E}b}}$ is a monomorphism, we get (\ref{form: def theta}).

Let us prove (\ref{form: teta2}). We have%
\begin{eqnarray*}
&&\left( \frac{i_{a+1}}{B^{\wedge _{E}a}}\otimes \frac{i_{b+1}}{B^{\wedge
_{E}b}}\right) \circ \theta _{a,b}\circ \gamma _{a+b+1}\overset{(\ref{form:
teta1})}{=}\left( p_{a}\otimes p_{b}\right) \circ \beta _{a+b+1}\circ \gamma
_{a+b+1} \\
&&\overset{\text{(\ref{form: def gamma})}}{=}\left( p_{a}\otimes
p_{b}\right) \circ \nabla \left[ \left( i_{u}\otimes i_{v}\right)
_{u+v=a+b+2}\right] =\nabla \left[ \left( p_{a}i_{u}\otimes
p_{b}i_{v}\right) _{u+v=a+b+2}\right] \\
&=&\nabla \left[ \delta _{u,a+1}\delta _{v,b+1}\left( p_{a}i_{a+1}\otimes
p_{b}i_{b+1}\right) _{u+v=a+b+2}\right] \\
&=&\left( \frac{i_{a+1}}{B^{\wedge _{E}a}}\otimes \frac{i_{b+1}}{B^{\wedge
_{E}b}}\right) \circ \nabla \left[ \delta _{u,a+1}\delta _{v,b+1}\left(
p_{a}^{a+1}\otimes p_{b}^{b+1}\right) _{u+v=a+b+2}\right] .
\end{eqnarray*}%
Since $\frac{i_{a+1}}{B^{\wedge _{E}a}}\otimes \frac{i_{b+1}}{B^{\wedge
_{E}b}}$ is a monomorphism we get (\ref{form: teta2}).
\end{proof}

\end{document}